\theoremstyle{plain}
\newtheorem*{rep@theorem}{\rep@title}
\newcommand{\newreptheorem}[2]{%
\newenvironment{rep#1}[1]{%
 \def\rep@title{#2 \ref{##1}}%
 \begin{rep@theorem}}%
 {\end{rep@theorem}}}
\newtheorem{thm}{Theorem}[section] 
\newtheorem{theorem}[thm]{Theorem} 
\newtheorem{corollary}[thm]{Corollary} 
\newtheorem{lem}[thm]{Lemma} 
\newtheorem{lemma}[thm]{Lemma}
\newtheorem{proposition}[thm]{Proposition} 
\newtheorem{conjecture}[thm]{Conjecture}
\theoremstyle{definition}
\newtheorem{definition}[thm]{Definition} 
\newtheorem{example}[thm]{Example}
\newtheorem{remark}[thm]{Remark}
\renewcommand{\epsilon}{\varepsilon}
\let\theta\vartheta
\let\phi\varphi
\DeclareMathAlphabet{\doba}{U}{msb}{m}{n}
\gdef\mC{\doba{C}}
\gdef\mN{\doba{N}}
\gdef\R{\doba{R}}
\gdef\SS{\mathbb{S}}
\renewcommand{\i}{{\rm i}}
\def\ran{{\mathop{\rm ran}}}
\def\dom{{\mathop{\rm dom}}}
\def\Spin{{\mathop{\rm Spin}}}
\def\spin{{\mathop{\rm Spin}}}
\def\supp{{\mathop{\rm supp}}}
\def\Id{\operatorname{Id}}
\newcommand{\s}{{\rm scal}}
\newcommand{\bq}{\begin{equation}}
\newcommand{\eq}{\end{equation}}
\newcommand{\Spinc}{\mathrm{Spin^c}}
\newcommand{\definedas}{\mathrel{\raise.095ex\hbox{\rm :}\mkern-5.2mu=}}
\begin{document}

\title 
[]
{Boundary value problems for noncompact boundaries of Spin$^c$ manifolds and spectral estimates}

\author{Nadine Grosse} 
\address{Nadine Grosse \\ 
Mathematisches Institut \\ 
Universit\"at Leipzig \\
04009 Leipzig \\
Germany}
\email{grosse@math.uni-leipzig.de}

\author{Roger Nakad} 
\address{Roger Nakad \\ 
Notre Dame University-Louaiz\'e \\ 
Faculty of Natural and Applied Sciences\\
Department of Mathematics and Statistics\\
P.O. Box 72, Zouk Mikael\\
Lebanon}
\email{rnakad@ndu.edu.lb}

\subjclass[2010]{30E25, 58C40, 53C27, 53C42.}

\date{\today}

\keywords{Manifolds of bounded geometry with noncompact boundary, Dirac operator, boundary value problems, $\Spinc$ structures, coercivity at infinity}

\thanks{The work was initiated during the  Junior Trimester 
Program in Differential Geometry at the Hausdorff Research Institute for Mathematics (HIM) in Bonn where the first author participated. The authors gratefully acknowledge the support of the HIM. The second author thanks the Institute of Mathematics of the University of Leipzig 
for its support and hospitality. The first author is very much indebted to Bernd Ammann, Ulrich Bunke and Cornelia Schneider for many enlightening  discussions. We also want to thank the referee for very helpful comments.}

\begin{abstract} 
We study boundary value problems for the  Dirac operator on Riemannian $\Spinc$ manifolds of bounded geometry and with 
noncompact boundary. This generalizes a part of the theory of boundary value problems by Ch. 
B\"ar and W. Ballmann for complete manifolds with closed boundary. As an application, we derive the lower bound of Hijazi-Montiel-Zhang, 
involving the mean curvature of the boundary, 
for the spectrum of the Dirac operator 
on the noncompact  boundary of a $\Spinc$ manifold, and the limiting case is studied. 
\end{abstract}
\maketitle
\section{Introduction}
In the last years, the spectrum of the Dirac operator on hypersurfaces of $\Spin$ 
manifolds has been intensively studied.  Indeed, many extrinsic upper bounds have been obtained  
(see \cite{Am1, Am2, AF, An, Ba, Bm} and references therein) and more recently 
in \cite{HMZ1, HMZ2, HMZ02, HZ1, HZ2, Z}, extrinsic lower bounds for the hypersurface Dirac operator are established. 
From these spectral estimates and their limiting cases, many topological and 
geometric informations on the hypersurface are derived.\\\\
In \cite{HMZ1}, O. Hijazi, S. Montiel and X. Zhang investigated the spectral
 properties of the Dirac operator on a compact manifold with  boundary for the Atiyah-Patodi-Singer type boundary
 condition (or shortly APS-boundary condition) corresponding to the spectral resolution of the classical Dirac operator 
of the boundary hypersurface. They proved that, on  the compact boundary  $\Sigma = \partial M$ of a compact 
Riemannian $\spin$ manifold $(M^{n+1}, g)$ of nonnegative
scalar curvature $\s^M$, the first nonnegative eigenvalue 
of the Dirac operator on the boundary satisfies
\begin{eqnarray}\label{hmz}
 \lambda_1 \geq \frac n2 \inf_\Sigma H,
\end{eqnarray}
where the mean curvature of the boundary $H$ is calculated with respect to the
inner normal and assumed to be nonnegative. Equality holds in \eqref{hmz} if and only if $H$ is
constant and every eigenspinor associated with the eigenvalue
 $\lambda_1$ is the restriction to $\Sigma$ of a parallel spinor field on $M$
(and hence $M$ is Ricci-flat). As application of the limiting case, they gave an elementary $\Spin$ proof of
the famous Alexandrov theorem: {\it The only closed embedded hypersurface in $\R^{n+1}$ of constant mean
curvature is the sphere of dimension $n$.}\\\\
Furthermore, Inequality \eqref{hmz} does not only give an extrinsic lower bound on the
first nonnegative eigenvalue but can
 also be seen as an obstruction to positive scalar curvature of the interior
given only in terms of a neighbourhood of the boundary. More precisely, let a
neighbourhood of the boundary $\Sigma$ be equipped with a metric of
nonnegative scalar curvature and such that the boundary has nonnegative mean curvature. If the lowest positive 
eigenvalue of the Dirac operator on the boundary is smaller than
$\frac{n}{2}\inf_{\Sigma} H$, then the metric cannot be extended to all of
$M$ such that the scalar curvature remains nonnegative.\\

In this paper, we extend the lower bound \eqref{hmz} to noncompact boundaries of Riemannian
$\Spinc$ manifolds under suitable geometric assumptions, see Theorem \ref{main}. When shifting from the compact case to the noncompact case, many obstacles
occur. Moreover, when shifting from the classical $\Spin$ geometry to $\Spinc$ geometry, the situation is more general 
since the spectrum of the Dirac operator will not only depend on the geometry of the manifold but also on the connection of
 the auxiliary line bundle associated with the fixed $\Spinc$ structure.\\

When we consider a Riemannian $\Spin$ or $\Spinc$ manifold with noncompact boundary, the main technical difference to the compact case 
is that we cannot
restrict all our computations to smooth spinors. For compact manifolds, this is
possible by using the spectral decomposition of $L^2$ by an eigenbasis. For
complete manifolds, eigenspinors do not have to exist or even if they do, in
general they do not form an orthonormal basis of $L^2$ since continuous
spectrum can occur. Additionally, the proof of Inequality \eqref{hmz} in the closed case uses 
the existence of a solution of a boundary value problem defined under the $APS$-boundary condition. While for noncompact 
boundaries 
 the idea 
of $APS$-boundary conditions can be carried over to noncompact boundaries
 by using the spectral theorem, it is not clear to us whether they actually define an actual boundary condition, 
see Example \ref{ex_bd_cond}.\\

In order to circumvent all these problems,  a large part of the paper is devoted to give a generalization of the theory 
of boundary value problems for noncompact boundaries, see Section \ref{boundary_values}. We stick
to the part of the theory that gives existence of solutions of such boundary value problem, cf. Remark \ref{comparebb}.
For complete manifolds with closed boundary, the theory of boundary value problems is given in \cite{baer_ballmann_11} by Ch.~B\"ar and W.~Ballmann. They did not only restrict to the classical Dirac operator but they generalized the traditional 
theory of elliptic boundary value problems to Dirac type operators. Additionally, they  proved a decomposition 
theorem for the essential spectrum, a general version of Gromov and Lawson's relative index 
theorem and a generalization of the cobordism theorem.\\\\
In Section \ref{boundary_values}, we will classify boundary conditions for a Riemannian $\Spinc$ manifold $(M^{n+1}, g)$ with 
noncompact boundary  $\Sigma := \partial M$ and of bounded geometry, see Definition \ref{bdd_geo}. Indeed, we prove in Section \ref{boundary_values} that  
the trace map or the restriction map $R: \phi\mapsto \phi|_\Sigma$ where $\phi$ is a compactly supported smooth spinor on 
$M$ can be extended to a bounded operator 
 $$R: \dom\, D_{\mathrm{max}} \to H_{-\frac{1}{2}} (\Sigma, \SS_M|_\Sigma).$$ Here $\dom\, D_\text{max}$ is the maximal
 domain of the Dirac operator on $M$, $\SS_M|_\Sigma$ is the restriction of the $\Spinc$ bundle $\SS_M$ to $\Sigma$ 
and for $H_{-\frac{1}{2}} (\Sigma, \SS_M|_\Sigma)$ see Definition \ref{-12-Sobolev}. The map $R$ is not surjective. But in Theorem \ref{workaround_image}, we show that 
there is an extension map $\tilde{\mathcal{E}}$ -- a right inverse to the restriction map $R: \Gamma_c^\infty(M, \SS_M) \to \Gamma_c^\infty(M, \SS_M)$ -- such that $\tilde{\mathcal{E}}R$ is a bounded linear operator 
from $\dom\, D_\text{max}$ to itself. The definition of $\tilde{\mathcal{E}}$ uses the extension map for closed boundaries introduced by B\"ar and Ballmann in \cite{baer_ballmann_11} as local building blocks. This will allow  to 
equip $R(\dom\ D_\mathrm{max})$ with a norm $\Vert.\Vert_{\check{R}}$ that turns it into a Hilbert space. With these ingredients, we can then classify the closed extensions of the Dirac operator $D_{cc}$ acting on smooth compactly 
supported spinors on $M$: For every closed extension of the Dirac operator acting on smooth compactly supported spinors on $M$ the set $B:=R(\dom\, D)\subset H_{-\frac{1}{2}} (\Sigma, \SS_M|_\Sigma)$ is closed  in $( R(\dom\, D_{\mathrm{max}}), \Vert.\Vert_{\check{R}})$. Conversely, every closed linear subset  $B\subset (R(\dom\, D_{\mathrm{max}}),\Vert.\Vert_{\check{R}})$ gives the domain  $\dom\, D_B$ of a closed extension.
Such subsets $B$ are called a boundary conditions.\\\\
Then,  we generalize the existence result for boundary value problems to our noncompact setting. For this, we need the
 notion of {\it $B$-coercivity at infinity}, see Definition \ref{coer}. This notion generalizes the
 notion of {\it coercivity at infinity} for closed boundaries as used in \cite{baer_ballmann_11}, where this assumption is also needed when characterizing the Fredholmness of the Dirac operator.  The {\it 
$B$-coercivity at infinity} condition will in 
general depend on the boundary condition $B$ and under some additional assumptions, it coincides with the 
{\it coercivity at infinity} condition used in \cite{baer_ballmann_11}. 

\begin{theorem}\label{intro-bvp} Let $M$ be a Riemannian $\Spinc$ manifold with boundary $N$. Let $(M,N)$ and the auxiliary line bundle $L$ over $M$ be of bounded geometry, cp. Definitions~\ref{bdd_geo} and~\ref{bdd_geo2}. Let $B\subset R(\dom\, D_{\rm max})$ be a boundary condition, and let the Dirac operator 
$$D_B\colon \mathrm{dom} D_B\subset L^2(M, \SS_M) \to L^2(M, \SS_M)$$
be $B$-coercive at infinity. Let $P_B$ be a projection from $R(\dom\, D_\mathrm{max})$ to $B$. 
Then, for all $\psi\in L^2(M, \SS_M)$ and $\tilde{\rho}\in \dom\, D_\mathrm{max}$ 
 where $\psi-D\tilde{\rho}\in (\ker (D_{B})^*)^\perp$  the boundary value problem
$$\left\{
\begin{array}{rlr}
 D\phi &=\psi & \text{on}\ M,\\
(\Id - P_{B})R\phi&= (\Id - P_{B})R\tilde{\rho}&\text{on}\ \Sigma,
\end{array}
\right.
$$
has a unique solution $\phi\in \dom\, D_\mathrm{max}$, up to elements of the kernel $\ker D_B$.
\end{theorem}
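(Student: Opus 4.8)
The plan is to reduce the boundary value problem to a pair of closed-range operator statements that mirror the compact case of Bär--Ballmann, but now carried out on the Hilbert space $(R(\dom\, D_{\mathrm{max}}),\Vert.\Vert_{\check R})$ supplied by Theorem~\ref{workaround_image}. First I would reformulate the problem as a surjectivity question: writing $\phi = \tilde\rho + \xi$, the system becomes $D\xi = \psi - D\tilde\rho$ with the boundary condition $(\Id-P_B)R\xi = 0$, i.e.\ $R\xi\in B$ modulo $\ker P_B$; so $\xi$ is sought in $\dom\, D_B$ (after absorbing the $\ker P_B$-ambiguity into the stated ``up to $\ker D_B$''), and the claim is exactly that $D_B\colon \dom\, D_B \to L^2(M,\SS_M)$ has range equal to $(\ker(D_B)^*)^\perp = \overline{\ran\, D_B}$, with $\psi - D\tilde\rho$ lying in that subspace by hypothesis. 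Uniqueness up to $\ker D_B$ is then immediate, so the whole content is that $\ran\, D_B$ is closed.

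The key step is therefore a coercivity (a priori) estimate: I would show that $B$-coercivity at infinity, combined with the interior elliptic estimate for the Dirac operator on a manifold of bounded geometry and the boundary regularity estimate $\Vert R\phi\Vert_{\check R}\lesssim \Vert\phi\Vert_{D_{\max}}$ from Section~\ref{boundary_values}, yields an inequality of the form
$$\Vert\phi\Vert_{H^1_{\mathrm{loc}}}\ \lesssim\ \Vert D\phi\Vert_{L^2} + \Vert\phi\Vert_{L^2(K)}$$
for $\phi\in\dom\, D_B$ with $K\subset M$ a fixed compact set, and then upgrade this — using that on the complement of $K$ the $B$-coercivity gives $\Vert\phi\Vert_{L^2(M\setminus K)}\lesssim \Vert D\phi\Vert_{L^2}$ — to
$$\Vert\phi\Vert_{L^2}\ \lesssim\ \Vert D_B\phi\Vert_{L^2}\qquad\text{for all }\phi\in(\ker D_B)^\perp\cap\dom\, D_B.$$
This is the standard mechanism: the compactly supported error term is removed by a Rellich-type argument against the finite-dimensional-on-compacta behaviour, here packaged exactly into the definition of $B$-coercivity. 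From such an estimate, $D_B$ restricted to $(\ker D_B)^\perp$ is bounded below, hence has closed range, and since $D_B$ is a closed operator this gives $\ran\, D_B$ closed, whence $\ran\, D_B = (\ker(D_B)^*)^\perp$ by the closed range theorem.

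The main obstacle I expect is the interplay between the weak boundary regularity (the trace lands only in $H_{-\frac12}$) and the need for the boundary term in the coercivity estimate to be controlled \emph{in the $\Vert.\Vert_{\check R}$ norm} rather than a naive Sobolev norm; this is precisely why the extension map $\tilde{\mathcal E}$ and the fact that $\tilde{\mathcal E}R$ is bounded $\dom\, D_{\max}\to\dom\, D_{\max}$ are needed — they let one trade boundary control for interior control without losing a derivative. Concretely, I would use $\tilde{\mathcal E}R$ to split any $\phi\in\dom\, D_B$ as $\phi = \tilde{\mathcal E}R\phi + (\phi - \tilde{\mathcal E}R\phi)$, where the second summand lies in $\dom\, D_{\mathrm{cc},\max}$-type spaces with vanishing trace and hence is handled by the closed-boundaryless theory, while the first summand is estimated by $\Vert R\phi\Vert_{\check R}$ and the boundedness of $\tilde{\mathcal E}$. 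The integration-by-parts (Green's) formula for $D$ on $M$ — valid on $\dom\, D_{\max}$ once the trace theorem is in place — is what identifies $(D_B)^*$ with $D_{B'}$ for the adjoint boundary condition $B'$, so that $(\ker(D_B)^*)^\perp$ is the correct target space; I would invoke this to close the argument, noting that all the hard analytic inputs (trace theorem, extension map, bounded geometry elliptic estimates) are already established earlier in the paper and only need to be assembled in this order.
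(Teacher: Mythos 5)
Your overall skeleton is the paper's: substitute $\phi=\tilde\rho+\xi$, note that the boundary condition on $\xi$ reads $(\Id-P_B)R\xi=0$, which (since $P_B$ maps into $B$ and is the identity on $B$) is exactly $R\xi\in B$, i.e.\ $\xi\in\dom\, D_B$; so everything reduces to closedness of $\ran\, D_B$, after which the Closed Range Theorem \ref{closed_range_theorem} gives $\ran\, D_B=(\ker\,(D_B)^*)^\perp\ni\psi-D\tilde\rho$, and uniqueness up to $\ker\, D_B$ is immediate. This is precisely how the paper argues, with closedness of the range supplied by Lemma \ref{coercive_closed}.

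However, your ``key step'' rests on a misreading of the hypothesis and, as written, would not go through. In this paper, $B$-coercivity at infinity (Definition \ref{coer}) \emph{is by definition} the estimate $\Vert D\phi\Vert_{L^2}\geq c\Vert\phi\Vert_{L^2}$ for all $\phi\in\dom\, D_B\cap(\ker D_B)^\perp$ --- exactly the inequality you set out to derive. There is nothing to prove here: one passes directly from this lower bound to closedness of the range, using only that $D_B$ is a closed operator (Lemma \ref{Bmax}, Lemma \ref{coercive_closed}). Your proposed derivation instead treats the hypothesis as the B\"ar--Ballmann-type condition of Definition \ref{coerbb} (control only outside a compact set $K$) and tries to remove the $\Vert\phi\Vert_{L^2(K)}$ error term by a Rellich-type compactness argument; with a \emph{noncompact} boundary this has no justification (no compact embedding is available, the trace only lands in $H_{-\frac{1}{2}}$ of a noncompact hypersurface, and $\Gamma_c^\infty(M\setminus K,\SS_M)$ need not lie in $\dom\, D_B$), and the paper only relates the two coercivity notions under additional hypotheses (Lemma \ref{equiv_coer_easydir}, Lemma \ref{equiv_coer}); this is exactly why Definition \ref{coer} is taken as the assumption of Theorem \ref{intro-bvp}. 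Similarly, the splitting $\phi=\tilde{\mathcal E}R\phi+(\phi-\tilde{\mathcal E}R\phi)$ and the identification $(D_B)^*=D_{B'}$ for an adjoint boundary condition are superfluous: the Closed Range Theorem is applied to the functional-analytic adjoint $(D_B)^*$ directly, and such an identification is not established in the paper for general $B$ (only for $B_\pm$, Lemma \ref{lemmaonP+-}). Deleting this middle portion and keeping your first and last reductions yields the paper's proof.
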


Note that projection just means a linear operator that restricted to $B$ acts as identity operator. 

Theorem \ref{intro-bvp}  will be one of the main ingredients to generalize Inequality \eqref{hmz} to our noncompact setting. As boundary condition $B$ we will not take the APS-boundary condition as in the closed case but another one: $B_\pm$, cf. Section \ref{boundcond_B+-}. For closed boundaries, the $B_\pm$ boundary condition
 was introduced in \cite{HMZ02} to prove a conformal version of \eqref{hmz}. 
Using Theorem~\ref{intro-bvp} for the boundary condition $B_\pm$ and the  $\Spinc$ Reilly 
inequality on possibly open boundary domains,  we obtain
\begin{theorem}\label{main}
Let $(M^{n+1},g)$ be a complete Riemannian $\Spinc$ manifold  with boundary $\Sigma$ and  $L$ be the auxiliary line bundle associated to the $\Spinc$-structure. Assume that $(M,\Sigma)$ and $L$ are of
bounded geometry. Moreover, we assume that
  $\Sigma$ has nonnegative
 mean curvature $H$ with respect to its inner unit normal field of $\Sigma$, 
the Dirac operator $D$ is $(B_+)$- or $(B_-)$-coercive at infinity and that $ \s^M
+2\i\Omega\cdot$ is a nonnegative operator where $\i\Omega$ denotes the curvature $2$-form of $L$.  Then, the 
infimum $\lambda_1$ of the nonnegative part of the spectrum of the Dirac operator on $\Sigma$ satisfies
\begin{eqnarray*}
\lambda_1\geq \frac{n}{2} \inf_{\Sigma} H.
\end{eqnarray*}
If $\lambda_1 \geq 0$ is an eigenvalue, equality holds if and only if $H$ is constant and any eigenspinor corresponding
 to $\lambda_1$ is the restriction of a parallel $\Spinc$ spinor  $\phi$ on $M$.
\end{theorem}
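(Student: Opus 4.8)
The plan is to mimic the proof of the Hijazi--Montiel--Zhang inequality in the closed case, but replacing the use of the APS-boundary condition with the $B_\pm$ boundary condition and the finite-dimensional spectral decomposition with the boundary value theory of Section~\ref{boundary_values}. Let $\lambda_1$ be the infimum of the nonnegative part of the spectrum of the boundary Dirac operator $D^\Sigma$ on $\Sigma$. The first step is to reduce to the case $\lambda_1 > 0$: if $\lambda_1 = 0$ there is nothing to prove, and otherwise, given any $\mu < \lambda_1$ (eventually we let $\mu \nearrow \lambda_1$), we must produce a spinor on $M$ whose restriction to $\Sigma$ is a prescribed boundary spinor and whose bulk Dirac equation is under control. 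Concretely, I would pick a boundary spinor $\varphi$ supported in the spectral interval $[\lambda_1,\infty)$ of $D^\Sigma$, and use Theorem~\ref{intro-bvp} with $\psi = 0$ and a suitable $\tilde\rho$ to solve $D\phi = 0$ on $M$ with $(\Id - P_{B_\pm})R\phi = (\Id - P_{B_\pm})R\tilde\rho$, i.e.\ arranging $R\phi$ to have the desired projection onto the $B_\pm$ boundary condition. The hypothesis that $D$ is $(B_+)$- or $(B_-)$-coercive at infinity is exactly what makes this solvability argument go through; one also checks the compatibility condition $\psi - D\tilde\rho \in (\ker(D_{B_\pm})^*)^\perp$ is met for the chosen data.

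The second step is the analytic heart: the $\Spinc$ Reilly inequality on the (possibly open) domain $M$. For a spinor $\phi$ with $D\phi = 0$ on $M$ one has, after integration by parts over $M$,
\[
\int_\Sigma \left( \langle D^\Sigma \phi, \phi\rangle - \tfrac{n}{2} H |\phi|^2 \right) \, \diff\Sigma \;\geq\; \tfrac14 \int_M \left( \s^M + 2\i\Omega\cdot \right)|\phi|^2 \, \diff\mathrm{vol} \;+\; \int_M |P\phi|^2\, \diff\mathrm{vol},
\]
where $P$ denotes the Penrose (twistor) operator and $D^\Sigma$ the boundary Dirac operator acting on the restricted spinor bundle. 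Under the hypothesis $\s^M + 2\i\Omega\cdot \geq 0$ the bulk terms on the right are nonnegative, so the boundary integral is nonnegative. The subtlety compared with the closed case is that this identity must be justified for spinors in $\dom D_\mathrm{max}$, not just smooth compactly supported ones, so the integration by parts relies on the trace map $R\colon \dom D_\mathrm{max} \to H_{-1/2}(\Sigma, \SS_M|_\Sigma)$ and the associated Green's formula from Section~\ref{boundary_values}; one also needs an approximation/density argument, together with the bounded geometry hypotheses on $(M,\Sigma)$ and $L$, to control boundary terms at infinity.

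The third step is to combine the two: the $B_\pm$ boundary condition is designed precisely so that, for $\phi$ satisfying it, the boundary term $\int_\Sigma \langle D^\Sigma\phi, \phi\rangle\,\diff\Sigma$ is controlled by $\mu \int_\Sigma |\phi|^2\,\diff\Sigma$ when $R\phi$ lives in the spectral part above $\lambda_1$ --- more carefully, the $B_\pm$ condition relates $\phi$ to $\sigma^\Sigma \phi$ (Clifford multiplication by the inner normal) in such a way that $\langle D^\Sigma \phi,\phi\rangle$ can be estimated from below by $(\lambda_1 - \varepsilon)|\phi|^2$ pointwise-after-integration, for the chosen $\varphi$. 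Feeding this into the Reilly inequality yields
\[
\left( \mu - \tfrac{n}{2}\inf_\Sigma H \right) \int_\Sigma |\varphi|^2 \, \diff\Sigma \;\geq\; 0,
\]
and since $\varphi$ is nonzero with $\int_\Sigma|\varphi|^2 > 0$ this forces $\mu \geq \tfrac{n}{2}\inf_\Sigma H$; letting $\mu \nearrow \lambda_1$ gives the claimed bound. I expect the main obstacle to be precisely this third step executed rigorously in the noncompact setting: one cannot simply take $\varphi$ to be an eigenspinor (eigenspinors of $D^\Sigma$ need not exist, and the spectrum above $\lambda_1$ may be continuous), so one must instead work with approximate eigenspinors / spectral projections $\chi_{[\lambda_1, \lambda_1+\varepsilon]}(D^\Sigma)$, verify that $R$ surjects onto enough of the relevant Sobolev-regular boundary data to apply Theorem~\ref{intro-bvp}, and control all error terms uniformly.

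For the equality case, assume $\lambda_1 \geq 0$ is an actual eigenvalue with eigenspinor $\varphi$ realizing equality. Then every inequality above must be an equality: the twistor term $\int_M |P\phi|^2 = 0$ forces $\phi$ to be a twistor (Killing-type) spinor on $M$, the term $\int_M (\s^M + 2\i\Omega\cdot)|\phi|^2 = 0$ together with $D\phi = 0$ upgrades $\phi$ to a parallel $\Spinc$ spinor, and equality in the $B_\pm$/mean-curvature estimate forces $H$ to be constant on $\Sigma$ and identifies $\varphi$ with the restriction $\phi|_\Sigma$. Conversely, if $\phi$ is a parallel $\Spinc$ spinor on $M$ and $H$ is constant, the restriction of $\phi$ to $\Sigma$ is an eigenspinor of $D^\Sigma$ with eigenvalue $\tfrac{n}{2}H$ (a standard computation with the $\Spinc$ Gauss formula), giving equality. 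The bounded-geometry and coercivity hypotheses are again what guarantee the relevant regularity and the applicability of Green's formula in these rigidity arguments.
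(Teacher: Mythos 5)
Your overall strategy matches the paper's: replace eigenspinors by approximate eigenspinors (or spectral projections) of $\widetilde{D}^{\Sigma}$, use the $B_\pm$-coercivity assumption to solve $D\Psi_i=0$ with boundary data prescribed through $P_+$ (via Corollary \ref{bvp-H1}, with $\ker (D_{B_\mp})^*=\ker D_{B_\pm}=\{0\}$ from Lemma \ref{lemmaonP+-}), insert the solution into the $\Spinc$ Reilly inequality, and treat equality afterwards. However, the step you yourself flag as the ``main obstacle'' is precisely where your sketch fails, and it is the heart of the proof. The Reilly inequality is applied to the solution $\Psi_i$, whose full trace $R\Psi_i$ is \emph{not} the chosen boundary spinor and does \emph{not} lie in the spectral subspace $[\lambda_1,\infty)$: only the half $P_+R\Psi_i=P_+\phi_i$ is prescribed. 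So there is no direct spectral control of $\int_\Sigma\langle\widetilde{D}^{\Sigma}R\Psi_i,R\Psi_i\rangle\,ds$, and your final display conflates $\varphi$ with the trace of the solution. Moreover the estimate you invoke has the wrong direction: you say $\langle D^\Sigma\phi,\phi\rangle$ is ``estimated from below by $(\lambda_1-\varepsilon)|\phi|^2$'', but Reilly already gives a lower bound of that same quantity by $\frac{n}{2}\int_\Sigma H|R\Psi_i|^2\,ds$; what is needed is an \emph{upper} bound by (asymptotically) $\lambda_1\Vert R\Psi_i\Vert_{L^2(\Sigma)}^2$, without which the inequality $(\mu-\frac{n}{2}\inf_\Sigma H)\int_\Sigma|\varphi|^2\geq 0$ does not follow from anything you have stated.

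The missing mechanism is the intertwining relation $\widetilde{D}^{\Sigma}P_\pm=P_\mp\widetilde{D}^{\Sigma}$ (Lemma \ref{lemmaonP+-}.iii). It annihilates the diagonal terms of the quadratic form and yields $(\widetilde{D}^{\Sigma}R\Psi_i,R\Psi_i)_\Sigma=2\Re\,(\widetilde{D}^{\Sigma}P_+R\Psi_i,P_-R\Psi_i)_\Sigma=2\Re\,(P_-\widetilde{D}^{\Sigma}\phi_i,P_-R\Psi_i)_\Sigma$, so that all spectral information is carried by the prescribed half $P_+R\Psi_i=P_+\phi_i$. One then writes $\widetilde{D}^{\Sigma}\phi_i=(\widetilde{D}^{\Sigma}-\lambda_1)\phi_i+\lambda_1\phi_i$, applies Cauchy--Schwarz, and proves (again using self-adjointness and the same intertwining relation) that $\Vert P_+\phi_i\Vert_{L^2(\Sigma)},\Vert P_-\phi_i\Vert_{L^2(\Sigma)}\to\frac12$; this is what allows $\Vert P_-\phi_i\Vert^2$ to be traded for $\Vert P_+R\Psi_i\Vert^2$ and absorbed into $\lambda_1\Vert R\Psi_i\Vert^2$, and it also gives $\Vert R\Psi_i\Vert_{L^2(\Sigma)}\geq\Vert P_+\phi_i\Vert_{L^2(\Sigma)}\to\frac12$, which is needed before dividing by $\Vert R\Psi_i\Vert^2$ and letting $i\to\infty$. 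Without this identity and the asymptotic equidistribution between $P_+$ and $P_-$, your third step cannot be closed, so the proposal has a genuine gap at its central estimate. (A minor slip in the equality discussion: it is the vanishing of the twistor term together with $D\Psi=0$ that forces $\Psi$ to be parallel, not the vanishing of the curvature term.)
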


The paper is structured as follows: In Section \ref{prelim},  we give all the preliminaries as e.g. 
the $\Spinc$ Dirac operator and the assumption on the bounded geometry. In Section \ref{sec_trace} we review the trace and extension theorem for Sobolev spaces on manifolds of bounded geometry and appropriate noncompact boundary, the spectral decomposition of the Dirac operator on the boundary and analyze an extension map for the maximal domain of the Dirac operator. 
The theory of boundary values will be generalized to
 our noncompact setting in Section \ref{boundary_values}. The special boundary condition $B_\pm$ needed to proof
 the desired inequality is examined in Section \ref{boundcond_B+-}. In Section \ref{section_coer}, we study the coercivity 
condition for the Dirac operator. Then, we review the spinorial Reilly inequality in order to ready to proof 
the inequality in Section \ref{proofmain}.

\section{Notations and preliminaries}\label{prelim}

In this section, we briefly review 
some basic facts about $\Spinc$ geometry. Then, we give the necessary
preliminaries on Sobolev spaces on manifolds with boundary, the Trace
Theorem and its implications, some basics of spectral theory, and we recall the closed
range theorem.\\

\paragraph{\bf The $\Spinc$ Dirac operator.}  Let $(M^{n+1}, g)$ be an $(n+1)$-dimensional
Riemannian $\Spinc$ manifold with boundary. On such a manifold we have a
Hermitian complex
vector bundle $\SS_M$ endowed with a natural scalar product $\langle., .\rangle$ and with  a connection 
$\nabla$ which parallelizes the metric. 
Moreover, the bundle $\SS_M$, called the $\Spinc$ bundle, is endowed with  a Clifford
multiplication denoted by ``$\cdot$'', 
$\cdot : TM \longrightarrow  \mathrm{End}_\mC (\SS_M)$, such that at every
point $x\in M$, ``$\cdot$''defines 
an irreducible
representation of the corresponding Clifford algebra. Hence, the complex rank of
$\SS_M$ is $2^{[\frac{n+1}{2}]}$. Given a $\Spinc$ structure on $(M^{n+1}, g)$, one can prove that the
determinant line bundle $\mathrm{det}\ \SS_M$ has a root of index $2^{[\frac
{n+1}{2}]-1}$, see \cite[Section 2.5]{6}. We denote 
by $L$ this root
line bundle over $M$ and call it the auxiliary line bundle associated with the
$\Spinc$ structure.

Locally, a $\Spin$ structure always exists. We denote by $\SS_M^{'}$ the (possibly globally non-existent)
 spinor bundle. Moreover, the square root of the
 auxiliary line bundle $L$ always exists locally. But, $\SS_M = \SS_M^{'} \otimes L^{\frac 12}$, see 
\cite[Appendix D]{6} and
 \cite{nakadthese}.  This essentially means  that, while the spinor bundle and
 $L^{\frac 12}$  may not exist globally, their tensor product (the $\Spinc$ bundle) is defined globally. Thus, 
the connection $\nabla$ on $\SS_M$ is the twisted connection of the one on the spinor bundle (coming 
from the Levi-Civita connection) and a fixed connection on $L$.\\

We denote by $\Gamma_c^\infty(M, \SS_M)$ the set of all compactly supported  smooth spinors on $M$. This allows boundary values if
$\partial M \neq \emptyset$.  
The set of smooth spinors that are compactly supported in the interior of $M$ is
denoted by $\Gamma_{cc}^\infty(M, \SS_M)$. For abbreviation, we set $L^2=L^2(M)=L^2(M, \SS_M)$ and $L^2(\Sigma)=L^2(\Sigma, \SS_M|_\Sigma)$ and 
analogously for other function spaces. Moreover, $(.,.)$ shall always denote the $L^2$-scalar product on $M$ 
and $(.,.)_\Sigma$ the one on $\Sigma$.

With these ingredients, we may define the Dirac operator $D$ acting on the space of smooth sections of $\SS_M$ -- denoted by $\Gamma^\infty(M, \SS_M)$ -- by the composition of the metric connection and the Clifford
multiplication. In local coordinates this reads as
$$D =\sum_{j=1}^{n+1} e_j \cdot \nabla_{e_j}$$
where $\{e_j\}_{j=1,\cdots, n+1}$ is an orthonormal basis of $TM$. It is a first-order elliptic
operator satisfying for all smooth spinors $\varphi,\psi$ on $M$ at least one of them being compactly supported \
\begin{align}\label{L2-structure_mod_boundary}
(D\psi, \varphi)-(\psi, D\varphi)=-\int_{{\partial M}} \langle \nu\cdot\psi|_{\partial M},
\varphi|_{\partial M}\rangle ds,
\end{align}
where $(., .)$ is the $L^2$-scalar product given by $(\phi, \psi)=\int_M \langle
\phi, \psi\rangle dv$, $\partial M$ is the boundary of $M$, $|_{ \partial M}$ denotes the restriction to the boundary, $\nu$ the inner unit
normal 
vector of the embedding $\partial M \hookrightarrow M$,  and $dv$ (resp. $ds$) is
the Riemannian volume form of $M$ 
(resp. of $\partial M$). Hence, if $\partial M = \emptyset$, the Dirac operator
is formally self-adjoint with respect to the $L^2$-scalar product.\\

An important tool when examining the Dirac operator on $\Spinc$ manifolds is the
Schr\"{o}dinger-Lichnerowicz formula:
\begin{eqnarray}
D^2 = \nabla^*\nabla + \frac 14 \s^M\; \mathrm{Id}_{\Gamma (\SS_M)}+
\frac{\i}{2}\Omega\cdot,
\label{sl}
\end{eqnarray}
where $\nabla^*$ is the adjoint of $\nabla$ with respect to the $L^2$-scalar
product, 
$\i\Omega$ is the curvature  of the auxiliary line bundle $L$ associated with a
fixed connection ($\Omega$ is a real $2$-form 
on $M$) and $\Omega\cdot$ is the extension of the Clifford multiplication to
differential forms. 
\begin{example}
(i) A $\Spin$ structure can be 
seen as a $\Spinc$ structure with trivial auxiliary
line bundle $L$ and trivial connection (and so $\i\Omega =0$).\\
(ii) Every almost complex manifold $(M^{2m = n+1}, g, J)$ 
of complex dimension $m$ has a canonical $\Spinc$ structure. In fact, the complexified cotangent bundle 
$T^*M\otimes \mathbb{C} = \Lambda^{1,0} M \oplus \Lambda^{0,1}M$ 
decomposes into the $\pm \i$-eigenbundles of the complex linear extension of the complex structure $J$. 
Thus, the spinor bundle of the canonical $\Spinc$ structure is given by $$\SS_M = \Lambda^{0,*} M =\oplus_{r=0}^m \Lambda^{0,r}M,$$
where $\Lambda^{0,r}M = \Lambda^r(\Lambda^{0,1}M)$ is the bundle of $r$-forms of type $(0, 1)$. 
The auxiliary line bundle of this canonical $\Spinc$ structure is given by  
$L = (K_M)^{-1}= \Lambda^m (\Lambda^{0,1}M)$, where $K_M$ is the canonical bundle of $M$ \cite{6, 19, HMU1, nakadthese}. 
Let $\alpha$  be the K\"{a}hler form defined by the complex structure $J$, i.e. $\alpha (X, Y)= g(X, JY)$ for 
all vector fields $X,Y\in \Gamma(TM).$ The auxiliary line bundle $L= (K_M)^{-1}$ has a canonical holomorphic
 connection induced from the Levi-Civita connection whose curvature form is given by $\i\Omega= \i\rho$, where $\rho$
 is the Ricci $2$-form given by $\rho(X, Y) = \mathrm{Ric} (X, JY)$. Here $\mathrm{Ric}$ denotes the Ricci tensor of $M$. For any other $\Spinc$ structure on $M^{2m}$, the spinorial bundle  can be written as \cite{6, HMU1}:
$$\SS_M  = \Lambda^{0,*}M\otimes\mathcal L,$$
where $\mathcal L^2 = K_M\otimes L$ and $L$  is the auxiliary bundle associated with this $\Spinc$
structure. In this case, the $2$-form $\alpha$ can be considered as an endomorphism of $\SS_M$ via
 Clifford multiplication and we have the well-known orthogonal splitting $\SS_M = \oplus_{r=0}^{m}\SS_M^r,$
where $\SS_M^r$ denotes the eigensubbundle corresponding 
to the eigenvalue $\i (m-2r)$ of $\alpha$, with complex rank $\binom{m}{k}$. The bundle $\SS_M^r$ corresponds
to $\Lambda^{0, r}M\otimes\mathcal L$. For the canonical $\Spinc$ structure, the subbundle $\SS_M^0$ is trivial. 
Hence and when $M$ is a  K\"{a}hler manifold, this $\Spinc$ structure admits parallel spinors (constant functions) 
lying in $\SS_M^0$ \cite{19}. Of course, we can define another $\Spinc$ structure for which the spinor bundle is 
given by 
$\Lambda^{*, 0} M =\oplus_{r=0}^m \Lambda^r (T_{1, 0}^* M)$ and the auxiliary line bundle by $K_M$. 
This $\Spinc$ structure is called the anti-canonical $\Spinc$ structure.\\
\end{example}
Any $\Spinc$ structure on $(M^{n+1}, g)$ induces a $\Spinc$ structure on its
boundary $\Sigma = \partial M$ and we  have
$$\left\{
\begin{array}{l}
{\SS_M}{|_\Sigma} \ \ \ \ \simeq \SS_{\Sigma}\ \text{\ \ \ if\ $n$ is even,} \\
{\SS^+_M}{|_\Sigma}\ \ \ \ \simeq\SS_{\Sigma} \ \text{\ \ \ if\ $n$ is odd.}
\end{array}
\right.
$$
We recall that if $n$ is odd, the spinor bundle $\SS_M$ splits into  
 $$\SS_M = {\SS^+_M} \oplus {\SS^-_M},$$
by the action of the complex volume element. Moreover, Clifford multiplication
with a vector field $X$ tangent to $\Sigma$ is given by 
$$X\bullet\phi = (X\cdot \nu\cdot \psi){|_\Sigma},$$
where $\psi \in \Gamma^\infty(M, \SS_M)$ (or $\psi \in \Gamma^\infty(\SS^+_M)$ if $n$ is odd),
$\phi$ is the restriction of $\psi$
 to $\Sigma$, '$\bullet$' is the Clifford multiplication on $M$. 
When $n$ is odd we also get ${\SS^-_M}  \simeq
\SS_{\Sigma}$. In this case, the Clifford multiplication by a 
vector field $X$ tangent to $\Sigma$ 
is given by $X\bullet\phi = - (X\cdot\nu\cdot\psi){|_\Sigma}$ and hence we
have 
${\SS_M}{|_\Sigma} \simeq \SS_{\Sigma} \oplus\SS_{\Sigma}$. 
Moreover, the corresponding auxiliary line bundle $L^\Sigma$ on $\Sigma$ is the restriction to $\Sigma$ of
the auxiliary line bundle $L$ 
and $\i\Omega^\Sigma = {\i\Omega}{|_\Sigma}$. 
We denote by $\nabla^\Sigma$ the spinorial Levi-Civita connection on
$\SS_{\Sigma}$.
 For all smooth vector fields $X\in \Gamma^\infty(T\Sigma)$ and for every smooth spinor field $\psi \in
\Gamma^\infty(M, \SS_M)$, we consider $\phi= \psi{|_\Sigma}$
 and  we have the following $\Spinc$ Gauss formula \cite{HMU1, nakadthese,
2ana}:
\begin{equation*}
(\nabla_X\psi){|_\Sigma} =  \nabla^{\Sigma}_X \phi + \frac 12 II(X)\bullet\phi,
\end{equation*}
where $II$ denotes the Weingarten map with respect to $\nu$. Moreover, let $D$
and $D^\Sigma$ be the Dirac operators 
on $M$ and $\Sigma$. After denoting any smooth spinor and its
restriction to $\Sigma$  by the same symbol, we have on $\Sigma$ (see \cite{HMU1, 2ana, nakadthese}) that 
\begin{eqnarray}
\widetilde{D}^{\Sigma} \phi = \frac{n}{2}H\phi -\nu\cdot D\phi-\nabla_{\nu}\phi ,
\label{diracgauss}
\end{eqnarray}
\begin{eqnarray}
\widetilde{D}^{\Sigma}(\nu\cdot\varphi) = -\nu\cdot\widetilde{D}^{\Sigma} \varphi,
\label{d1}
\end{eqnarray}
where $H = \frac 1n \mathrm{tr}(II)$ denotes the mean curvature and 
$\widetilde{D}^{\Sigma} = D^\Sigma$ if $n$ is even and $\widetilde{D}^{\Sigma}=D^\Sigma
\oplus(-D^\Sigma)$ if $n$ is odd. Note that $\sigma(\widetilde{D}^{\Sigma})=\{\pm\lambda\ |\ \lambda\in\sigma(D^\Sigma)\}$ 
where $\sigma(A)$ denotes the spectrum of an operator $A$.\\

\paragraph{\bf Bounded geometry.} In this paragraph, we recall the definition of  manifolds of bounded geometry.   
\begin{definition}\label{bdd_geo}\cite[Definition 2.2]{Schick01}
Let $(M^{n+1},g)$ be a complete Riemannian manifold with boundary $\Sigma$. We say
that $(M,\Sigma)$ is of bounded geometry if the following is fulfilled
\begin{itemize}
\item[(i)] The curvature tensor of $M$ and all its covariant derivatives are bounded.
\item[(ii)] The injectivity radius of $\Sigma$ is positive.
 \item[(iii)] There is a collar around $\Sigma$, i.e: There is $r_\partial>0$ such that the geodesic collar 
\[F: U_\Sigma=[0,r_\partial)\times \Sigma \to M,\ (t,x)\mapsto \exp_x(t\nu)\]
is a diffeomorphism onto its image where $\nu$ is the inner unit normal field on $\Sigma$. We equip $U_\Sigma$ with the induced metric and will identify $U_\Sigma$ with its image.
 \item[(iv)] There exists $\epsilon>0$ such that the injectivity radius of each point $x\in M\setminus U_\Sigma$ is greater or equal than $\epsilon$.
 \item[(v)] The mean curvature of $\Sigma$ and all its covariant derivatives are bounded.
\end{itemize}
\end{definition}
\begin{definition}\label{bdd_geo2}(cp. \cite[A.1.1]{Shubin} together with \cite[Theorem B]{Eich})
 Let $E$ be a hermitian vector bundle over $M$ where $(M,\Sigma)$ is of bounded geometry. Then $E$ is said to be of bounded geometry 
if its curvature and all its covariant derivatives are bounded.
\end{definition}
\begin{remark}
\begin{enumerate}
\item Note that the above definition contains the usual definition of manifold of bounded
geometry without boundary. Moreover, if $(M,g)$ is of bounded geometry,
 then $(\Sigma, g|_\Sigma)$ is also of bounded geometry \cite[Corollary 2.24]{Schick01}.
\item For the spinor bundle $\SS_M^{'}$ associated with a $\Spin$ structure, the bounded geometry follows
automatically from the bounded geometry of $M$, \cite[Section 3.1.3]{Ammha}. For a $\Spinc$ manifold the 
situation is more general since the $\Spinc$ bundle  $\SS_M$ does not only depend
 on the geometry of the underlying manifold but also on the geometry of the auxiliary line bundle $L$. But,  
$\SS_M = \SS_M^{'}\otimes L^\frac{1}{2}$, where $\SS_M^{'}$
 is the locally defined spinor bundle, $L^{\frac{1}{2}}$ is locally defined too and $\SS_M$ is globally defined. 
Thus, the assumption that $L$ is of bounded geometry assures that $\SS_M$ is also of bounded geometry.\end{enumerate}
\end{remark}
\framebox{
{\bf Assumption for the rest of the paper:}  $(M,\Sigma)$ and $L$  are of bounded geometry.}\\

\paragraph{\bf The Sobolev space $H_1$ on manifolds with boundary.} 
We define the $H_1 = H_1 (M, \SS_M)$-norm on $\Gamma_{c}^\infty(M, \SS_M)$ by 
 \[\Vert \phi\Vert_{H_1(M, \SS_M)}^2 = \Vert \phi\Vert_{L^2(M, \SS_M)}^2 + \Vert
 \nabla\phi\Vert_{L^2(M, \SS_M)}^2.\]
 Finally, we define  $H_1=H_1 (M, \SS_M)$ as the closure  of $\Gamma_c^\infty(M,
\SS_M)$ with respect to the $H_1$-norm defined above. 

Using the Lichnerowicz formula \eqref{sl},
the Gau\ss\ theorem $(\nabla^*\nabla \phi, \phi)= \Vert
\nabla\phi\Vert^2_{L^2}+\int_{\Sigma} \langle \nabla_\nu\phi,\phi\rangle ds$, \eqref{L2-structure_mod_boundary} and
\eqref{diracgauss}, we obtain another description of the $H_1$-norm: For all $\phi\in \Gamma_c^\infty(M, \SS_M)$,  we have \begin{eqnarray}\label{Lich}
\Vert \phi\Vert_{H_1}^2 = \Vert \phi\Vert_{L^2}^2 + \Vert
D\phi\Vert_{L^2}^2-\int_{M} \frac{\s^M}{4}|\phi|^2 dv - \int_{M} \frac {i}{2}
\<\Omega\cdot\phi, \phi \> dv +\int_{{\Sigma}} \langle \phi|_\Sigma, D^W (\phi|_\Sigma) \rangle ds, 
\end{eqnarray}
where $D^W = \widetilde{D}^{\Sigma} -\frac n2 H$ is the so-called Dirac-Witten operator. Note that due to the local 
expression of $D$ and the Cauchy Schwarz inequality,  we always have 
\begin{equation}\Vert D\phi\Vert_{L^2}^2 \leq \int_M \left( \sum_{i=1}^{n+1}  |\nabla_{e_i} \phi|\right)^2 dv\leq (n+1)\Vert \nabla\phi\Vert_{L^2}^2,
\label{equ_H1D_easydir}\end{equation}
for all $\phi\in H_1(M, \SS_M)$.\\

\paragraph{\bf Spectral theory.} Most of the following can be found in \cite{baer}. In this paragraph, we shortly
review the spectral theory of the Dirac  operator $D\colon H_1(N, \SS_N)\subset L^2(N,
\SS_N) \to L^2(N, \SS_N)$ on a complete Riemannian $\Spinc$ manifold $N$ without boundary. 
Note that we assume that $N$ is of bounded geometry, and hence the graph norm of $D$, $\Vert.\Vert_D$, and the $H_1$-norm are equivalent. Then $D$ is self-adjoint and the spectrum is real. A real number $\lambda$ is an eigenvalue of $D$ if there exists a nonzero
spinor $\varphi \in H_1$ with $D\varphi =  \lambda \varphi$. Then $\varphi$ is
called an eigenspinor to the eigenvalue $\lambda$. Standard local elliptic regularity
theory gives that an eigenspinor is always smooth. The set of all eigenvalues is
denoted by $\sigma_p(D^\Sigma)$ -- the point spectrum. If $N$ is closed, the Dirac operator has a pure point spectrum.
But on open manifolds, the spectrum might have a continuous part. In general,
the spectrum -- denoted by $\sigma(D)$ --
is composed of the point, the continuous and the residual
spectrum. In case of a self-adjoint operator -- as we have --  there is no
residual spectrum.  Often another decomposition of the spectrum is used -- the one into discrete
spectrum
$\sigma_d(D)$ and essential spectrum $\sigma_{ess}(D)$. 
A real number $\lambda$ lies in the essential spectrum of $D$ if there
exists a sequence
of smooth compactly supported spinors $\varphi_i$ which $\Vert \varphi_i\Vert_{L^2}=1$, $\varphi_i$ converge weakly to zero and 
$$
\Vert (D - \lambda )\varphi_i \Vert_{ L^2}  \longrightarrow 0.$$
The essential spectrum contains amongst other elements all eigenvalues of infinite multiplicity. In
contrast, the
discrete spectrum $\sigma_{d}(D) := \sigma_{p}(D) \smallsetminus
\sigma_{ess}(D)$ consists of all eigenvalues of finite multiplicity.\\

\paragraph{\bf Closed Range Theorem.} Next, we want to recall briefly (a part of) the Closed Range Theorem for later use.
\begin{thm}\label{closed_range_theorem}\cite[p.205]{Yo} Let $T: X\to Y$ be a
closed linear operator between Banach spaces $X,Y$. Then the range $\ran(T)$ of
$T$ is closed in $Y$ if and only if $\ran (T)=\ker (T^*)^\perp$ where $T^*$ is
the adjoint operator of $T$ and $\ker (T^*)$ is the kernel of $T^*$.
\end{thm}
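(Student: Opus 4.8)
The statement is the classical Closed Range Theorem (Yosida \cite[p.~205]{Yo}), so the plan is simply to recall its short proof. Throughout, as is implicit whenever $T^*$ is written down, we assume $\dom T$ is dense in $X$, so that the adjoint $T^*\colon\dom T^*\subset Y^*\to X^*$ is well defined, and we read $\ker(T^*)^\perp$ as the pre-annihilator ${}^\perp(\ker T^*)=\{y\in Y:\langle y,y^*\rangle=0\text{ for all }y^*\in\ker T^*\}\subset Y$ (in the Hilbert-space situation in which the theorem is applied later this is just the orthogonal complement of $\ker T^*$ in $Y$, so no ambiguity arises). The one observation from which both implications follow is that
\[
\overline{\ran T}={}^\perp(\ker T^*)
\]
for \emph{every} densely defined $T$. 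Granting this: if $\ran T={}^\perp(\ker T^*)$, then $\ran T$ is closed because a pre-annihilator, being an intersection of kernels of continuous functionals, is always closed; conversely, if $\ran T$ is closed, then $\ran T=\overline{\ran T}={}^\perp(\ker T^*)$. Note that closedness of the operator $T$ itself plays no role in this particular equivalence; it is needed only for the companion assertions $\ran T$ closed $\iff\ran T^*$ closed and $\ran T^*=(\ker T)^\perp$ in the full theorem of \cite[p.~205]{Yo}, which we do not use.

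To prove the displayed identity, first note the inclusion $\ran T\subset{}^\perp(\ker T^*)$: if $y=Tx$ with $x\in\dom T$ and $y^*\in\ker T^*$, then $\langle y,y^*\rangle=\langle Tx,y^*\rangle=\langle x,T^*y^*\rangle=\langle x,0\rangle=0$. Since ${}^\perp(\ker T^*)$ is closed, this already gives $\overline{\ran T}\subset{}^\perp(\ker T^*)$.

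For the reverse inclusion, suppose some $y_0\in{}^\perp(\ker T^*)$ does not lie in $\overline{\ran T}$. Since $\overline{\ran T}$ is a closed subspace of $Y$ not containing $y_0$, the Hahn--Banach separation theorem yields $y^*\in Y^*$ with $\langle y_0,y^*\rangle\neq 0$ and $y^*|_{\ran T}=0$. The latter says $\langle Tx,y^*\rangle=0$ for all $x\in\dom T$; in particular the functional $x\mapsto\langle Tx,y^*\rangle$ is (trivially) bounded on $\dom T$, so $y^*\in\dom T^*$, and by density of $\dom T$ the element $T^*y^*\in X^*$ representing it must vanish, i.e.\ $y^*\in\ker T^*$. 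But then $\langle y_0,y^*\rangle=0$, contradicting the choice of $y^*$. Hence ${}^\perp(\ker T^*)\subset\overline{\ran T}$, which finishes the argument.

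There is no substantial obstacle here: the only step that deserves attention is recognizing that the functional produced by Hahn--Banach automatically lies in $\ker T^*$ — this is exactly where the definition of the adjoint is used — together with the observation that in the general Banach-space formulation one must interpret $\ker(T^*)^\perp$ as the pre-annihilator inside $Y$ rather than the annihilator in the bidual (a distinction that disappears for the reflexive, indeed Hilbert, spaces relevant to the applications in this paper).
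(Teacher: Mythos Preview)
Your proof is correct and is the standard Hahn--Banach argument for this equivalence. Note, however, that the paper does not give its own proof of this statement: it merely quotes the result from Yosida \cite[p.~205]{Yo} for later use, so there is nothing to compare against. Your observation that closedness of $T$ is not actually required for the particular equivalence stated (only for the companion assertions in the full Closed Range Theorem) is also accurate.
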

A linear operator $T: X\to Y$ between Banach spaces is called Fredholm  if its
kernel is finite dimensional and its image has
 finite codimension.

\section{Trace theorems and extensions}\label{sec_trace}

We consider the restriction operator
\begin{eqnarray*}
R: \Gamma_c^\infty(M, \SS_{M}) &\to&
\Gamma_c^\infty(\Sigma, \SS_{M}|_\Sigma)\\
\phi&\mapsto& \phi|_{\Sigma}.
\end{eqnarray*}

 If it is clear from the
context that $R\phi$ is considered instead of $\phi$, we will sometimes abbreviate
by using $\phi$ only. 
 The first part of this section will be devoted to see how the restriction operator $R$ extends to a bounded linear operator between the Sobolev spaces $H_1(M, \SS_M)$ and $H_{\frac{1}{2}} (\Sigma, \SS_M|_\Sigma)$. This Theorem is known as Trace Theorem and is a very classical result for $\R^n_+$ and compact manifolds with boundary. After reviewing the Euclidean result and basic definitions, we will shortly review how this result extends to manifolds $(M,\Sigma)$ of bounded geometry. In particular, the restriction operator will have a bounded linear right inverse -- that is called extension operator $\mathcal{E}$.

For more details on the definition of bounded geometry on manifolds with boundary see \cite{Schick01}. For the equivalence of all those different definitions of Sobolev-norms involved here and the corresponding theorems for submanifolds (not necessarily hypersurfaces) see \cite{conny}. 

For our purpose, Sobolev spaces will not be sufficient later on. The maximal domain of the Dirac operator is bigger than $H_1(\SS_M)$. The rest of this section is devoted to define an extension operator $\tilde{\mathcal{E}}$ such that $\tilde{\mathcal{E}}R: \Gamma^\infty_{c}(M, \SS_M)\to \Gamma^\infty_{c}(M, \SS_M)$ extends to a bounded operator w.r.t. the graph norm of $D$. For the definition of $\tilde{\mathcal{E}}$ we will use the special extension map introduced by B\"ar and Ballmann in \cite{baer_ballmann_11} for closed boundaries.

\subsection{Trace and Extension for Sobolev spaces}

\paragraph{\bf Trace Theorem for functions on $\mathbb{R}_{+}^{n+1}=\{ (x_0,x_1,\ldots, x_n)\in \mathbb{R}^{n+1}\ |\ x_0\geq 0\}$.} 

We identify the boundary of $\R_+^{n+1}$ with $\R^n$. First we repeat the definition of the Sobolev spaces $H_s(\R^n, \mC^r)$:

\begin{definition}\label{-12-Sobolev-euclidean}\cite[Definition 3.1]{taylor_81}
Let $s\in \mathbb{R}$. The $H_s:=H_{s}^2$-norm of a compactly supported function
$f:\mathbb{R}^n \mapsto \mathbb{C}^r$ is defined as
\[ \Vert f\Vert_{H_s(\mathbb{R}^n, \mathbb{C}^r)}^2:= \int_{\mathbb{R}^n} \left|\hat{f}(\xi)\right|^2 (1+|\xi|)^s d \xi\] where
$\hat{f}(x):=(2\pi)^{-\frac{n}{2}} \int_{\mathbb{R}^n} e^{-ix\cdot \xi} f(\xi)
d\xi$ denotes the Fourier transform of $f$. The space $H_s(\mathbb{R}^n, \mathbb{C}^r)$ is then defined as the completion of
$\Gamma_c^\infty(\mathbb{R}^n, \mathbb{C}^r)$, the space of smooth compactly supported functions on $\mathbb{R}^n$ with values in $\mathbb{C}^r$, with respect to the $H_s$-norm.
\end{definition}

The spaces $H_s(\R^{n+1}_+, \mathbb{C}^r)$ are defined analogously.

\begin{thm}\, \cite[p.138, Remark 1]{TF},\cite[Theorem I.3.4]{taylor_81}, \cite[Theorem 7.34 and 7.36]{RenRog}  Let $s>\frac{1}{2}$. The restriction map for complex valued smooth functions $R: \Gamma_c^\infty(\R^{n+1}_+)\to \Gamma_c^\infty(\R^{n})$, $f\to f|_{\R^n}$ extends to a bounded linear operator from $H_s(\R^{n+1}_+)$ to $H_{(s-\frac{1}{2})} (\R^{n})$. Moreover there is an extension operator $\mathcal{E}: H_{(s-\frac{1}{2})} (\R^{n})\to H_s(\R^{n+1}_+) $ that is a bounded linear operator and a right inverse to $R$.
\end{thm}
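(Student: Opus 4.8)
\emph{The plan} is to reduce to the full space $\R^{n+1}$ and then argue with the tangential Fourier transform. First I would invoke the standard half-space extension theorem: there is a bounded operator $S\colon H_s(\R^{n+1}_+)\to H_s(\R^{n+1})$, a right inverse to the restriction $H_s(\R^{n+1})\to H_s(\R^{n+1}_+)$, which moreover maps $\Gamma_c^\infty(\overline{\R^{n+1}_+})$ into $\Gamma_c^\infty(\R^{n+1})$ (for instance Seeley's extension, $Sf(x_0,x')=\sum_k a_k f(-b_k x_0,x')$ for $x_0<0$ with suitable summable $a_k$ and scales $b_k\to\infty$). It then suffices to prove: \emph{(a)} $\Vert RF\Vert_{H_{s-1/2}(\R^n)}\le C\Vert F\Vert_{H_s(\R^{n+1})}$ for all $F\in\Gamma_c^\infty(\R^{n+1})$, and \emph{(b)} the existence of a bounded extension operator from $\R^n$ into $\R^{n+1}$. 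Composing (a) with $S$, and (b) with the restriction $\R^{n+1}\to\R^{n+1}_+$, and using density of $\Gamma_c^\infty$, then yields the claim for $\R^{n+1}_+$ (with the $H_s$-norm on $\R^{n+1}_+$ understood as the quotient norm, so that these passages are legitimate).

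For (a) I would write points of $\R^{n+1}$ as $(x_0,x')$ with $x'\in\R^n$, set $\langle\xi'\rangle\definedas(1+|\xi'|^2)^{1/2}$, and let $\mathcal{F}$ and $\mathcal{F}'$ be the Fourier transforms on $\R^{n+1}$ and in the tangential variables. For $g\definedas RF=F(0,\cdot)$ one has, up to a fixed constant, $\mathcal{F}'g(\xi')=\int_\R\mathcal{F}F(\xi_0,\xi')\,d\xi_0$, so Cauchy--Schwarz with the weight $(\langle\xi'\rangle^2+\xi_0^2)^{s}$ gives
\[ |\mathcal{F}'g(\xi')|^2\le C\Big(\int_\R|\mathcal{F}F(\xi_0,\xi')|^2(\langle\xi'\rangle^2+\xi_0^2)^{s}\,d\xi_0\Big)\Big(\int_\R(\langle\xi'\rangle^2+\xi_0^2)^{-s}\,d\xi_0\Big). \]
The substitution $\xi_0=\langle\xi'\rangle t$ turns the last factor into $\langle\xi'\rangle^{1-2s}\int_\R(1+t^2)^{-s}\,dt$, and $\int_\R(1+t^2)^{-s}\,dt<\infty$ \emph{precisely because} $s>\tfrac12$; this is where the hypothesis is used and where the loss of exactly $\tfrac12$ a derivative originates. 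Multiplying by $\langle\xi'\rangle^{2s-1}$ (which cancels the $\langle\xi'\rangle^{1-2s}$ just produced) and integrating in $\xi'\in\R^n$, the right side becomes a constant multiple of $\Vert F\Vert_{H_s(\R^{n+1})}^2$ while the left side is $\Vert g\Vert_{H_{s-1/2}(\R^n)}^2$, which is (a).

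For (b) I would fix a Schwartz function $\phi\colon\R\to\R$ with $\phi(0)=1$ (say a Gaussian) and set, for $u\in\Gamma_c^\infty(\R^n)$,
\[ (\mathcal{E}u)(x_0,x')\definedas(\mathcal{F}')^{-1}\big[\phi(x_0\langle\xi'\rangle)\,\mathcal{F}'u(\xi')\big](x'). \]
Since $\phi(0)=1$ we get $(\mathcal{E}u)(0,\cdot)=u$, so $\mathcal{E}$ is a right inverse to $R$; and since the $x_0$-Fourier transform of $x_0\mapsto\phi(x_0\langle\xi'\rangle)$ is $\xi_0\mapsto\langle\xi'\rangle^{-1}\widehat\phi(\xi_0/\langle\xi'\rangle)$, the full transform is $\mathcal{F}(\mathcal{E}u)(\xi_0,\xi')=\langle\xi'\rangle^{-1}\widehat\phi(\xi_0/\langle\xi'\rangle)\,\mathcal{F}'u(\xi')$. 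Hence
\[ \Vert\mathcal{E}u\Vert_{H_s(\R^{n+1})}^2=\int_{\R^n}|\mathcal{F}'u(\xi')|^2\langle\xi'\rangle^{-2}\Big(\int_\R|\widehat\phi(\xi_0/\langle\xi'\rangle)|^2(\langle\xi'\rangle^2+\xi_0^2)^{s}\,d\xi_0\Big)\,d\xi', \]
and the substitution $\xi_0=\langle\xi'\rangle t$ extracts a factor $\langle\xi'\rangle^{2s+1}$ and leaves $\int_\R|\widehat\phi(t)|^2(1+t^2)^{s}\,dt<\infty$ (because $\widehat\phi$ is Schwartz). Thus $\Vert\mathcal{E}u\Vert_{H_s(\R^{n+1})}^2\le C\Vert u\Vert_{H_{s-1/2}(\R^n)}^2$; restricting $\mathcal{E}u$ to $\R^{n+1}_+$ and extending by density gives the required extension operator.

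The \textbf{main difficulty} is essentially bookkeeping rather than depth: fixing the meaning of the $H_s$-norm on the half-space so that the reduction through $S$ followed by a restriction is valid, and checking that no smoothness or support issue arises when $F$ is extended across $\{x_0=0\}$ or when $\mathcal{E}u$ is restricted back to $\R^{n+1}_+$. The analytic core --- the elementary scaling identity $\int_\R(\langle\xi'\rangle^2+\xi_0^2)^{-s}\,d\xi_0\asymp\langle\xi'\rangle^{1-2s}$ and its counterpart in (b) --- automatically produces both the sharp trace exponent $s-\tfrac12$ and the threshold $s>\tfrac12$.
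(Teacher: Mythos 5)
The paper offers no proof of this statement at all --- it is quoted as a classical result from Triebel, Taylor and Renardy--Rogers --- and your argument is essentially the standard proof found in those references, so the only thing to report is that it is correct: the Cauchy--Schwarz step with the weight $(\langle\xi'\rangle^2+\xi_0^2)^{s}$ and the scaling $\xi_0=\langle\xi'\rangle t$ give the trace bound with the sharp loss of $\tfrac12$ and identify $s>\tfrac12$ as the exact threshold, and the tangential multiplier $\phi(x_0\langle\xi'\rangle)$ with $\phi(0)=1$ is the classical bounded right inverse. Your reduction of the half-space case to $\R^{n+1}$ via a Seeley-type total extension operator is legitimate, though it quietly fixes a convention for the $H_s$-norm on $\R^{n+1}_+$ (quotient norm), which the paper leaves vague in ``defined analogously''; you flag this yourself and it is indeed only bookkeeping. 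One caveat worth recording: your $\mathcal{E}u$ is Schwartz but not compactly supported, whereas the paper later relies on an Euclidean extension operator with $\mathcal{E}(\Gamma_c^\infty(\R^{n-1}))\subset\Gamma_c^\infty(\R^{n})$ (end of the proof of Theorem \ref{trace_theorem}); that property is not part of the statement you were asked to prove, and recovering it requires modifying your construction (a cutoff in $x_0$ is easy, compact tangential support needs a different extension such as the one in Triebel), so your proof establishes exactly the quoted theorem and no more.
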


The generalization of this theorem to vector-valued Sobolev spaces follows immediately by the following:  Let $f=(f_1, \ldots, f_r): \mathbb{R}^n\to
\mathbb{C}^r$. Then the norms $\Vert f\Vert_{H_{s}(\mathbb{R}^n, \mathbb{C}^r)}$
and $\sum_{i=1}^r \Vert f_i\Vert_{H_{s}(\mathbb{R}^n, \mathbb{C})}$ are
equivalent.\\

\paragraph{\bf Trace Theorem on manifolds of bounded geometry.}

From now on, let $M$ be a Riemannian manifold possibly with boundary and of
bounded geometry, as in Definition \ref{bdd_geo}. Moreover, let $E$ be a
hermitian vector bundle over $M$. We assume that $E$ is also of bounded
geometry, see Definition \ref{bdd_geo2}. 
In order to obtain a trace theorem for sections in $E$ we need coordinates of the manifold that are adapted to the structure of the boundary. Those will be Fermi coordinates and there will be a adapted synchronous trivialization of $E$. This will allow that we can use the trace theorem on $\R^n$ on the individual charts to obtain the trace theorem on $(M,\Sigma)$.

In the following, we restrict to trace theorems for Sobolev spaces over $L^2$, for more general domains as Sobolev spaces over $L^p$ or Triebel-Lizorkin spaces see \cite{conny}.

Before we define Sobolev spaces for sections of $E$, we introduce Fermi coordinates adapted to the boundary and a corresponding synchronous trivialization of the vector bundle:

\begin{definition}\cite[Definition 4.3 and Lemma 4.4]{conny},\label{def_synch}\cite[Definition 2.3]{Schick01}
Let $(M^n,\Sigma)$ be of bounded geometry, see Definition \ref{bdd_geo} and the notions defined therein. 

Let $r=\min\{ \frac{1}{2} r_{\Sigma}, \frac{1}{4}r_M, \frac{1}{2}r_\partial\}$ where $r_{\Sigma}$ is the injectivity radius of ${\Sigma}$ and $r_M$ the one of $M$. Let  $p^{\Sigma}_\alpha\in {\Sigma}$ and $p_\beta\in M$ be points such that 
\begin{itemize}
 \item  the metric balls $B_{r}^{\Sigma}(p^{\Sigma}_\alpha)$ in ${\Sigma}$ (i.e. w.r.t. the metric $g|_{\Sigma}$) give a uniformly locally finite cover of ${\Sigma}$
 \item  the metric balls $B_{r}(p_\beta)$ in $M$ cover $M\setminus U_r(\Sigma)$ where $U_r(\Sigma):=F([0,r)\times \Sigma)$ and those balls are uniformly locally finite on all of $M$.
\end{itemize}
Let $(U_\gamma)_{\gamma}$ be a locally finite covering of $M$ where each $U_\gamma$ is of the form $B_{r}(p_\beta)$ or $U^{\Sigma}_{p_\alpha^{\Sigma}}=F([0,2r) \times B^{\Sigma}_{2r} (p_\alpha^{\Sigma}))$. By construction the covering $(U_\gamma)_{\gamma}$ is locally finite. Coordinates on $U_\gamma$ are chosen to be geodesic normal coordinates around $p_\beta$ in case $U_\gamma=B_{r}(p_\beta)$. Otherwise coordinates are given by Fermi coordinates \[ \kappa_\alpha: U^{\Sigma}_{p_\alpha^{\Sigma}}:=[0, 2r) \times B_{2r}(0)\subset \mathbb{R}^{n} \to U^{\Sigma}_{p_{\alpha}^{\Sigma}},\quad  (t,x)\mapsto \exp_{\exp^{\Sigma}_{p_{\alpha}^{\Sigma}}(x)} (t\nu)\] where $\nu$ is the inner normal field of $\Sigma$ and $\exp^{\Sigma}$ is the exponential map on ${\Sigma}$ w.r.t. the induced metric. 
 We call such coordinates $(U_\gamma, \kappa_\gamma)_\gamma$ Fermi coordinates for $(M,\Sigma)$. If $U_\gamma=B_r(p_\gamma)$, $E|_{U_\gamma}$  is trivialized via parallel transport along radial geodesic and we
identify $E|_{U_\gamma}$ with the trivial $\mathbb{C}^r$-bundle over $U_\gamma$.
Otherwise, $E|_{U_\gamma}$ is trivialized via parallel transport along radial geodesic of the boundary and along the normal direction. The obtained trivialization is denoted by $(\xi_\gamma)_\gamma$.
\end{definition}

In case of manifolds without boundary,  the Definition of $\xi_\gamma$ in  \ref{def_synch} is the usual definition 
of synchronous trivialization as found in 
\cite[Section 3.1.3]{Ammha}. Note that by construction the restriction of a synchronous trivialization of $E$ over a manifold $M$ to its boundary $\Sigma$ gives
a synchronous trivialization of $E|_{\Sigma}$.

\begin{lem}\label{part_un}\cite[Lemma 4.8]{conny} There is a partition of unity $h_\gamma$ subordinated to the Fermi coordinates introduced above fulfilling:  For all $k\in \mN$ there is $c_k>0$  such that for all $\gamma$ and all multi-indices $\mathfrak{a}=(\mathfrak{a}_1,\ldots, \mathfrak{a}_n)$ with $|\mathfrak{a}|\leq k$ 
\[|D^\mathfrak{a} (h_\gamma\circ \kappa_\gamma)|\leq c_k.\]
Here, $D^\mathfrak{a}= \frac{\partial^{\mathfrak{a}_1}}{(\partial x_1)^{\mathfrak{a}_1}}\cdots \frac{\partial^{\mathfrak{a}_n}}{(\partial x_n)^{\mathfrak{a}_n}}$ where $x_i$ are the coordinates.
\end{lem}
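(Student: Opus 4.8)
The plan is to run the standard construction of a partition of unity from a single model cutoff function, pulled back into every Fermi chart, then summed and normalized; the only real work is to check that every constant that appears is independent of the chart index $\gamma$, which is exactly what the bounded-geometry hypothesis on $(M,\Sigma)$ provides.

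First I would fix, once and for all, a smooth cutoff $\chi$ on the model domain of a Fermi boundary chart, $[0,2r)\times B_{2r}(0)$ (and, separately, a second one on $B_{2r}(0)\subset\R^n$ for the interior charts $B_r(p_\beta)$), with $0\le\chi\le 1$, with $\chi\equiv 1$ on the inner region corresponding to the subcover appearing in Definition~\ref{def_synch} (i.e.\ $[0,r)\times B_r(0)$, resp.\ $B_r(0)$), and with $\supp\chi$ compactly contained in the chart. Setting $\tilde h_\gamma:=\chi\circ\kappa_\gamma^{-1}$, extended by zero outside $U_\gamma$, produces smooth functions with $\supp\tilde h_\gamma\subset U_\gamma$ and, tautologically, $\tilde h_\gamma\circ\kappa_\gamma=\chi$; hence for every $k$ the derivatives $D^{\mathfrak a}(\tilde h_\gamma\circ\kappa_\gamma)$ with $|\mathfrak a|\le k$ are bounded by a constant depending only on $\chi$ and $k$, uniformly in $\gamma$.

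Next I would analyze $S:=\sum_\gamma\tilde h_\gamma$. Since the balls $B_r^{\Sigma}(p_\alpha^{\Sigma})$ and $B_r(p_\beta)$ of Definition~\ref{def_synch} already cover $M$ and $\chi\equiv1$ over the corresponding inner regions, one has $S\ge 1$ everywhere; since the cover $(U_\gamma)_\gamma$ is uniformly locally finite -- say at most $N$ of the $U_\gamma$ meet any given point -- and $0\le\tilde h_\delta\le 1$, one also has $S\le N$. For the derivatives of $S$ I would write, on $\kappa_\gamma^{-1}(U_\gamma\cap U_\delta)$,
\[ \tilde h_\delta\circ\kappa_\gamma=\chi\circ(\kappa_\delta^{-1}\circ\kappa_\gamma), \]
and use the fact that, for $(M,\Sigma)$ of bounded geometry, the Fermi and geodesic normal coordinate changes $\kappa_\delta^{-1}\circ\kappa_\gamma$ have all partial derivatives bounded by constants that do not depend on $\gamma,\delta$ -- this is the manifestation of bounded geometry as a ``uniform $C^\infty$-atlas'', a consequence of the uniform $C^k$-bounds and uniform lower ellipticity of the metric coefficients in these coordinates (see \cite{Schick01}, also \cite{Eich,Shubin}). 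The higher-order chain rule (Fa\`a di Bruno) then bounds $|D^{\mathfrak a}(\tilde h_\delta\circ\kappa_\gamma)|$ for $|\mathfrak a|\le k$ by a constant depending only on $k$, $\chi$ and these transition-map bounds, and at most $N$ terms of the sum are nonzero at any point, so $|D^{\mathfrak a}(S\circ\kappa_\gamma)|\le C_k$ uniformly in $\gamma$.

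Finally I would set $h_\gamma:=\tilde h_\gamma/S$. This is smooth because $S\ge1$, satisfies $h_\gamma\ge 0$, $\supp h_\gamma\subset U_\gamma$ and $\sum_\gamma h_\gamma\equiv1$, so it is a partition of unity subordinate to the Fermi cover. For the stated estimate, write $h_\gamma\circ\kappa_\gamma=\chi/(S\circ\kappa_\gamma)$; from $1\le S\circ\kappa_\gamma\le N$ together with the bounds $|D^{\mathfrak a}(S\circ\kappa_\gamma)|\le C_j$ for $|\mathfrak a|\le j\le k$, a short induction (equivalently, Fa\`a di Bruno applied to $t\mapsto1/t$) gives $|D^{\mathfrak a}((S\circ\kappa_\gamma)^{-1})|\le C_k'$ for $|\mathfrak a|\le k$, with $C_k'$ depending only on $k$, $N$ and the $C_j$; the Leibniz rule then yields $|D^{\mathfrak a}(h_\gamma\circ\kappa_\gamma)|\le c_k$ with $c_k$ independent of $\gamma$. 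The only place where anything beyond the Leibniz and chain rules enters is the uniformity of the transition maps $\kappa_\delta^{-1}\circ\kappa_\gamma$ and of the overlap multiplicity $N$ -- both are built into the construction of the Fermi cover in Definition~\ref{def_synch} and follow from the bounded-geometry assumptions on $(M,\Sigma)$ (control of curvature, second fundamental form, injectivity radii, and collar width); since the statement is quoted as \cite[Lemma 4.8]{conny}, I would ultimately defer the detailed transition-map estimates to that reference.
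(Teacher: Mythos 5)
The paper gives no proof of this lemma at all: it is quoted directly from \cite[Lemma 4.8]{conny}, so the only meaningful comparison is with the standard argument used there, and your construction is exactly that argument -- pull a fixed model cutoff back into every chart, sum, normalize, and trace every constant to (a) uniform local finiteness of the cover and (b) uniform $C^k$-bounds on the coordinate changes $\kappa_\delta^{-1}\circ\kappa_\gamma$, the latter being precisely what bounded geometry of $(M,\Sigma)$ supplies via \cite{Schick01}; deferring those transition-map estimates to the references is consistent with what the paper itself does. One small bookkeeping point you should still note is that the overlap bound $N$ is stated in Definition~\ref{def_synch} for the $r$-balls, while your sum $S$ runs over the (larger, $2r$-sized) charts $U_\gamma$; passing from one to the other is a routine volume-comparison consequence of bounded geometry, but it is an extra constant to record.

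There is, however, one step that does not work as literally written. For the interior charts, Definition~\ref{def_synch} takes $U_\gamma=B_r(p_\beta)$ with normal coordinates on $B_r(0)$, and it is these balls of radius exactly $r$ that cover $M\setminus U_r(\Sigma)$; there is no built-in slack. So there is no smooth $\chi$ with $\chi\equiv 1$ on $B_r(0)$ and $\supp\chi$ compactly contained in that chart, and without such slack your lower bound $S\geq 1$ (indeed any uniform positive lower bound on $S$) is not guaranteed: a point of $M\setminus U_r(\Sigma)$ lying near the edge of every ball $B_r(p_\beta)$ that contains it, and outside the sets where the boundary cutoffs equal $1$, could make $S$ arbitrarily small or zero, and then $h_\gamma=\tilde h_\gamma/S$ breaks down. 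The repair is routine but must be said: since $r\leq \frac{1}{4}r_M$, geodesic normal coordinates extend to $B_{2r}(p_\beta)$, so enlarge the interior coordinate domains to $B_{2r}(0)$ while keeping the covering by the $r$-balls (equivalently, choose centers so that smaller balls already cover), and then take $\chi\equiv 1$ on $\overline{B_r(0)}$ with support in $B_{2r}(0)$ -- exactly parallel to the boundary charts, where the $2r$-sized chart $F([0,2r)\times B^\Sigma_{2r}(p^\Sigma_\alpha))$ does provide the needed slack over the covering sets $F([0,r)\times B^\Sigma_r(p^\Sigma_\alpha))$. With that adjustment your Leibniz/Fa\`a di Bruno bookkeeping does yield the stated uniform constants $c_k$, and the proof matches the one in the cited reference.
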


Now we have all the ingredients to define Sobolev spaces on $E$ via local pullback to vector valued functions over $\mathbb{R}^n$:

\begin{definition}\label{-12-Sobolev}\cite[Definition 5.9]{conny}
Let $s\in \mathbb{R}$. Let $(U_\gamma)_{\gamma}$ be a covering of $M$ together with a synchronous trivialization $\xi_\gamma$ of $E$ as
defined above. 
Moreover, let the covering be locally finite, and let $h_\gamma$ be a partition of
unity subordinated to $U_\gamma$ as in Lemma \ref{part_un}.
Then 
\[ \Vert \phi \Vert_{H_s(M,E)}^2:=\sum_{\alpha} \Vert \xi_{\alpha*}(h_\alpha
\phi)\Vert_{H_s(\mathbb{R}_+^n,\mathbb{C}^r)}^2.\]
\end{definition}

Note that up to equivalence the $H_s$-norm does not depend on the choices of $(U_\gamma, h_\gamma, \xi_\gamma)$, cp. \cite[Theorem 4.9, 5.11 and Lemma 5.13]{conny}.

\begin{remark}\label{rem_app} 

\begin{itemize}
\item[(i)] For $s\in \mathbb{N}$ the definition of $H_s(M,E)$ from above
is equivalent to the usual definition given by 
\[ \Vert \phi \Vert_{H_s(M,E)}:=\sum_{i=0}^s \Vert \underbrace{\nabla^E\cdots
\nabla^E}_{i\ \mathrm{times}} \phi\Vert_{L^2(M,E)},\]
cp. \cite{Schick01}, \cite[Theorem 5.7]{conny}.
\item[(ii)] For $s\leq t$ we have $\Vert \phi\Vert_{H_s(M,E)} \leq \Vert \phi\Vert_{H_t(M,E)}$. That is seen for $M=\mathbb{R}^n_+$
immediately using $(1+|\xi|)^s\leq (1+|\xi|)^t$. For general $M$,  one just lifts this result by using a partition of unity and a synchronous
trivialization.
\item[(iii)] Let $D^\Sigma: \Gamma_c^\infty (\Sigma, \SS_{\Sigma}) \to
\Gamma_c^\infty (\Sigma, \SS_{\Sigma})$ be the Dirac operator on $\Sigma$. For any
$s\in \mathbb{R}$, there is a unique closed extension of $D^\Sigma$ from $H_{s}(\Sigma,
\SS_\Sigma) \to H_{s-1}(\Sigma, \SS_\Sigma)$. 
\end{itemize}
\end{remark}

\begin{theorem}\label{trace_theorem}
Let $M^{n}$ be a Riemannian manifold with boundary $\Sigma$. Assume that $(M, \Sigma)$
is of bounded geometry and that $E$ is a hermitian vector bundle over $M$ that
is also of bounded geometry. Then, for all $s\in\mathbb{R}$ with $s> \frac{1}{2}$ the operator $R:
\Gamma_c^\infty(M, E) \to \Gamma_c^\infty(\Sigma, E|_\Sigma)$ with $\phi\mapsto \phi|_\Sigma$
extends to a bounded linear  operator from $H_{s}(M, E)$ to $H_{s-\frac{1}{2}}(\Sigma,
E|_\Sigma)$. Moreover,  there is a bounded right inverse $\mathcal{E}: H_{s-\frac{1}{2}}(\Sigma, E_\Sigma)\to H_s(M,E)$ of the trace map $R:  H_s(M,E)\to  
  H_{s-\frac{1}{2}}(\Sigma, E|_\Sigma)$. In particular, $\mathcal{E} (\Gamma_c^\infty(\Sigma, E|_\Sigma)) \subset \Gamma_c^\infty(M, E_M)$.
\end{theorem}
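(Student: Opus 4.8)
The plan is to reduce the statement on $(M,\Sigma)$ to the already-recorded Euclidean trace theorem on $\R^{n+1}_+$ via the Fermi coordinates and synchronous trivialization $(U_\gamma,\kappa_\gamma,\xi_\gamma)$ of Definition~\ref{def_synch}, using the partition of unity $h_\gamma$ of Lemma~\ref{part_un} whose pulled-back derivatives are bounded uniformly in $\gamma$. The key point is that the Sobolev norms of Definition~\ref{-12-Sobolev} are by construction $\ell^2$-sums of Euclidean Sobolev norms of the localized pieces $\xi_{\gamma*}(h_\gamma\phi)$, so it suffices to control each piece by the Euclidean trace theorem and then reassemble.

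First I would treat the boundedness of $R$. Pick a partition of unity $h_\gamma$ as in Lemma~\ref{part_un} and note that the charts $U_\gamma$ that meet $\Sigma$ are exactly the Fermi-type charts $U^\Sigma_{p_\alpha^\Sigma}$; the interior balls $B_r(p_\beta)$ do not contribute to $R\phi$. By construction the restriction of the synchronous trivialization $\xi_\gamma$ to $\Sigma$ is a synchronous trivialization of $E|_\Sigma$ and the restriction of the Fermi chart $\kappa_\gamma$ is the geodesic normal chart $\exp^\Sigma_{p_\alpha^\Sigma}$ on $\Sigma$, and likewise $h_\gamma|_\Sigma$ is an admissible partition of unity for $\Sigma$. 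Hence for $\phi\in\Gamma_c^\infty(M,E)$,
\[
\Vert R\phi\Vert_{H_{s-\frac12}(\Sigma,E|_\Sigma)}^2 = \sum_\alpha \Vert (\xi_\alpha|_\Sigma)_* (h_\alpha|_\Sigma\, R\phi)\Vert_{H_{s-\frac12}(\R^n,\mC^r)}^2 = \sum_\alpha \Vert R_{\R^{n+1}_+}\big(\xi_{\alpha*}(h_\alpha\phi)\big)\Vert_{H_{s-\frac12}(\R^n,\mC^r)}^2,
\]
where $R_{\R^{n+1}_+}$ is the Euclidean restriction; the last identity uses that localization and restriction-to-boundary commute in these coordinates. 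Applying the vector-valued Euclidean trace theorem (valid since $s>\tfrac12$) termwise bounds each summand by $C\Vert \xi_{\alpha*}(h_\alpha\phi)\Vert_{H_s(\R^{n+1}_+,\mC^r)}^2$ with $C$ independent of $\alpha$, and summing over $\alpha$ gives $\Vert R\phi\Vert_{H_{s-\frac12}(\Sigma)}^2 \le C\Vert \phi\Vert_{H_s(M,E)}^2$. Density of $\Gamma_c^\infty(M,E)$ in $H_s(M,E)$ then yields the bounded extension.

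Next I would build the extension operator $\mathcal{E}$. Choose a locally finite cover of $\Sigma$ compatible with the one induced by the Fermi charts, with partition of unity $\bar h_\alpha := h_\alpha|_\Sigma$, and pick a fixed bump function $\chi\in\Gamma_c^\infty([0,2r))$ with $\chi\equiv 1$ near $0$. Given $u\in H_{s-\frac12}(\Sigma,E|_\Sigma)$, localize $\bar h_\alpha u$, push it to $\mC^r$-valued functions on $\R^n$ via $(\xi_\alpha|_\Sigma)_*$, apply the Euclidean extension operator $\mathcal{E}_{\R}$ of the recorded theorem to get functions on $\R^{n+1}_+$, cut off in the normal variable by $\chi(t)$ to keep supports inside $U^\Sigma_{p_\alpha^\Sigma}$, pull back via $\xi_\alpha$ and sum: $\mathcal{E}u := \sum_\alpha \xi_\alpha^*\big(\chi\cdot \mathcal{E}_{\R}[(\xi_\alpha|_\Sigma)_*(\bar h_\alpha u)]\big)$. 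That $R\mathcal{E}=\Id$ follows because $\chi\equiv1$ near $t=0$, $\mathcal{E}_\R$ is a right inverse to $R_{\R^{n+1}_+}$, and $\sum_\alpha \bar h_\alpha=1$. Boundedness of $\mathcal{E}$ from $H_{s-\frac12}(\Sigma)$ to $H_s(M)$ is again a termwise estimate: multiplication by $\chi$ and by the pulled-back $h_\gamma$ are bounded on $H_s(\R^{n+1}_+,\mC^r)$ with $\gamma$-uniform norm by Lemma~\ref{part_un}, and one controls $\Vert\mathcal{E}u\Vert_{H_s(M,E)}^2=\sum_\gamma\Vert\xi_{\gamma*}(h_\gamma\mathcal{E}u)\Vert_{H_s(\R^{n+1}_+)}^2$ by expanding $h_\gamma\mathcal{E}u$ over the finitely many (uniformly bounded in number) indices $\alpha$ with $U_\alpha\cap U_\gamma\neq\emptyset$, using uniform control of the transition maps of the bounded-geometry atlas on each overlap. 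Finally, if $u\in\Gamma_c^\infty(\Sigma,E|_\Sigma)$ then only finitely many $\alpha$ occur, each $\mathcal{E}_\R$-image is smooth and compactly supported on $\R^{n+1}_+$ (by the corresponding property in the Euclidean theorem), and the cutoff keeps supports in the charts, so $\mathcal{E}u\in\Gamma_c^\infty(M,E)$.

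The main obstacle is the $\gamma$-uniformity of all constants: one must know that the finite multiplicity of the cover, the $C^k$-bounds on the chart transition functions, the metric coefficients and the trivialization cocycles are all bounded independently of the chart, so that the termwise Euclidean estimates sum to a global estimate. This is precisely what bounded geometry of $(M,\Sigma)$ and of $E$ supplies (Definitions~\ref{bdd_geo} and~\ref{bdd_geo2}, together with Lemma~\ref{part_un} and the norm-equivalence statement following Definition~\ref{-12-Sobolev}), and it is also where the collar condition~(iii) is used, so the argument is routine modulo carefully invoking those inputs; indeed the whole statement is essentially \cite[Theorem 4.9 and related results]{conny} specialized to hypersurfaces, and I would cite it for the fine points while giving the above reduction as the proof.
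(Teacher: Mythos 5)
Your proposal is correct and follows essentially the same route as the paper's proof, which likewise localizes via the Fermi coordinates, synchronous trivialization and uniform partition of unity, applies the Euclidean trace and extension theorem termwise with constants uniform in the chart index, and defers the fine points to \cite[Theorem 5.14]{conny}. Your write-up is merely more explicit (e.g.\ the normal cutoff $\chi$ in the construction of $\mathcal{E}$) than the paper's sketch, but there is no substantive difference in approach.
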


\begin{proof} This theorem is a special case of \cite[Theorem 5.14]{conny}. We shortly sketch the basic idea:
We choose a covering $U_\gamma$ together with a synchronous trivialization $\xi_\gamma$ of $E$ and
a subordinated partition of unity $h_\gamma$ as in Definition \ref{def_synch} and Lemma \ref{part_un}. The restrictions
$U_\gamma\cap \Sigma$ then cover $\Sigma$. 
Let $\phi\in H_s(M,E)$. Then, for all $\alpha$ we have
$\xi_{\alpha*}(h_\alpha \phi)\in H_s(\mathbb{R}_+^n, \mathbb{C}^r)$. Thus, there
exists a $C>0$ with $\Vert R(\xi_{\gamma*}(h_\gamma
\phi))\Vert_{H_{s-\frac{1}{2}}(\mathbb{R}^{n-1}, \mathbb{C}^r)}\leq C \Vert
\xi_{\gamma*}(h_\gamma \phi)\Vert_{H_{s}(\mathbb{R}_+^n, \mathbb{C}^r)}$. 

With 
 $R(\xi_{\alpha*}(h_\alpha \phi))=\xi_{\alpha*}(h_\alpha R\phi)$ 
we get  after summing up that  $\Vert R\phi\Vert_{H_{s-\frac{1}{2}}(\Sigma, E|_\Sigma)}\leq C \Vert
\phi\Vert_{H_{s}(M,E)}$ since $\xi_\alpha$ is still a synchronous trivialization
for $E|_\Sigma$.

The rest is proven analogously as the Trace Theorem using the original Euclidean version of the extension map $\mathcal{E}:  H_{s-\frac{1}{2}}(\R^{n-1})\to H_s(\R^n)$. The last inclusion follows immediately from $\mathcal{E} (\Gamma_c^\infty(\R^{n-1})) \subset \Gamma_c^\infty(\R^n)$. \end{proof}

The last theorem gives immediately

\begin{corollary}\label{H1_ER}
The map $\mathcal{E}R: \Gamma_c^\infty(M, E)\to \Gamma_c^\infty(M,E)$ extends to a bounded linear map  $\mathcal{E}R: H_s(M, E)\to H_s(M,E)$ for all $s>\frac{1}{2}$.
\end{corollary}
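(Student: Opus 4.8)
The plan is to derive Corollary~\ref{H1_ER} directly from Theorem~\ref{trace_theorem} by composing the two bounded maps it supplies. First I would recall that Theorem~\ref{trace_theorem} gives, for every fixed $s>\frac12$, a bounded restriction operator $R\colon H_s(M,E)\to H_{s-\frac12}(\Sigma, E|_\Sigma)$ and a bounded extension operator $\mathcal{E}\colon H_{s-\frac12}(\Sigma, E|_\Sigma)\to H_s(M,E)$, together with the compatibility statements $R(\Gamma_c^\infty(M,E))\subset \Gamma_c^\infty(\Sigma, E|_\Sigma)$ and $\mathcal{E}(\Gamma_c^\infty(\Sigma, E|_\Sigma))\subset \Gamma_c^\infty(M,E)$. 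Hence on the dense subspace $\Gamma_c^\infty(M,E)\subset H_s(M,E)$ the composition $\mathcal{E}R$ indeed maps $\Gamma_c^\infty(M,E)$ into $\Gamma_c^\infty(M,E)$, so the statement ``$\mathcal{E}R\colon \Gamma_c^\infty(M,E)\to\Gamma_c^\infty(M,E)$'' makes sense as written.

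Next I would observe that the extension of $\mathcal{E}R$ to $H_s(M,E)$ is nothing more than the operator-theoretic composite of the already-extended maps: if $\tilde{R}\colon H_s(M,E)\to H_{s-\frac12}(\Sigma,E|_\Sigma)$ and $\tilde{\mathcal{E}}\colon H_{s-\frac12}(\Sigma,E|_\Sigma)\to H_s(M,E)$ denote the bounded extensions from Theorem~\ref{trace_theorem}, then $\tilde{\mathcal{E}}\circ\tilde{R}\colon H_s(M,E)\to H_s(M,E)$ is bounded, with operator norm at most $\Vert\tilde{\mathcal{E}}\Vert\,\Vert\tilde{R}\Vert$. Because both $\tilde R$ and $\tilde{\mathcal E}$ restrict on the relevant smooth compactly supported sections to the classical restriction and extension maps, $\tilde{\mathcal{E}}\circ\tilde{R}$ agrees with $\mathcal{E}R$ on $\Gamma_c^\infty(M,E)$; since $\Gamma_c^\infty(M,E)$ is dense in $H_s(M,E)$ by definition of the Sobolev norm (Definition~\ref{-12-Sobolev}) and a bounded extension of a densely defined operator is unique, $\tilde{\mathcal{E}}\circ\tilde{R}$ is the asserted bounded extension of $\mathcal{E}R$.

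There is essentially no obstacle here: the corollary is a formal consequence of the theorem, and the only thing to be careful about is bookkeeping — checking that the density statement used is exactly the one built into Definition~\ref{-12-Sobolev}, and that the smoothness/compact-support compatibility clauses of Theorem~\ref{trace_theorem} are invoked so that $\mathcal{E}R$ is genuinely defined as a self-map of $\Gamma_c^\infty(M,E)$ before one passes to the completion. If one wanted to be maximally explicit one could instead verify boundedness on smooth compactly supported sections directly, estimating $\Vert\mathcal{E}R\phi\Vert_{H_s(M,E)}\le \Vert\mathcal{E}\Vert\,\Vert R\phi\Vert_{H_{s-\frac12}(\Sigma,E|_\Sigma)}\le \Vert\mathcal{E}\Vert\,\Vert R\Vert\,\Vert\phi\Vert_{H_s(M,E)}$, and then extend by continuity; but this is the same argument in a slightly different order. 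I expect the proof to be a single short paragraph of this composition argument.
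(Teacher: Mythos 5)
Your proof is correct and is exactly the argument the paper intends: the paper derives Corollary \ref{H1_ER} ``immediately'' from Theorem \ref{trace_theorem} by composing the bounded trace map $R\colon H_s(M,E)\to H_{s-\frac12}(\Sigma,E|_\Sigma)$ with the bounded extension $\mathcal{E}\colon H_{s-\frac12}(\Sigma,E|_\Sigma)\to H_s(M,E)$, together with density of $\Gamma_c^\infty(M,E)$ in $H_s(M,E)$. No issues.
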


\begin{lemma}\label{pairing-Sobolev} The $L^2$-product $(\phi, \psi)= \int_\Sigma \langle \phi,\psi\rangle dv$ for $\phi, \psi\in \Gamma_c^\infty (\Sigma, E|_\Sigma)$ extends to a perfect pairing 
$H_s(\Sigma,E|_\Sigma) \times H_{-s}(\Sigma,E|_\Sigma)\to \mathbb{C} $ for all $s\in \mathbb{R}$.
\end{lemma}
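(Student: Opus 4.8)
The plan is to reduce the statement to the Euclidean case via the synchronous trivialization and partition of unity, exactly as in the proof of the Trace Theorem. First I would recall that for $M = \R^n$ (or $\R^{n-1}$ here, the boundary dimension), the claim that the $L^2$-pairing extends to a perfect pairing $H_s \times H_{-s} \to \mathbb{C}$ is standard: under Fourier transform, $\langle f,g\rangle_{L^2} = \int \hat f(\xi)\overline{\hat g(\xi)}\, d\xi$, and writing $\hat f(\xi)(1+|\xi|)^{s/2} = F(\xi)$, $\hat g(\xi)(1+|\xi|)^{-s/2} = G(\xi)$ turns this into the $L^2(\R^{n-1})$-pairing of $F \in L^2$ and $G \in L^2$, which is perfect. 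Boundedness $|\langle f,g\rangle| \le \|f\|_{H_s}\|g\|_{H_{-s}}$ is Cauchy–Schwarz; nondegeneracy and the fact that every bounded functional on $H_{-s}$ arises this way follows from Riesz representation on $L^2$ after the weight substitution. The vector-valued version $H_s(\R^{n-1},\mathbb{C}^r)$ follows by the componentwise norm equivalence already quoted after the Euclidean Trace Theorem.

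Next I would globalize. Fix Fermi coordinates $(U_\gamma,\kappa_\gamma)_\gamma$ with synchronous trivialization $\xi_\gamma$ of $E|_\Sigma$ and a subordinate partition of unity $h_\gamma$ with uniformly bounded derivatives, as in Definition~\ref{def_synch} and Lemma~\ref{part_un} (applied to $\Sigma$ in place of $M$; recall that the restriction of a synchronous trivialization to the boundary is again synchronous). Boundedness of the pairing on $H_s(\Sigma,E|_\Sigma) \times H_{-s}(\Sigma,E|_\Sigma)$: pick a second partition of unity $\tilde h_\gamma$, supported in $U_\gamma$, with $\tilde h_\gamma \equiv 1$ on $\supp h_\gamma$, also with uniformly bounded derivatives. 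Write $\langle\phi,\psi\rangle_{L^2(\Sigma)} = \sum_\gamma \int_\Sigma h_\gamma \langle \phi,\psi\rangle = \sum_\gamma \langle h_\gamma \phi, \tilde h_\gamma \psi\rangle_{L^2}$, push each summand to $\R^{n-1}$ via $\xi_{\gamma*}$, apply the Euclidean bound, and then apply Cauchy–Schwarz on $\ell^2$ over $\gamma$ together with $\sum_\gamma \|\xi_{\gamma*}(h_\gamma\phi)\|_{H_s}^2 \sim \|\phi\|_{H_s(\Sigma)}^2$ and the analogous estimate $\sum_\gamma \|\xi_{\gamma*}(\tilde h_\gamma\psi)\|_{H_{-s}}^2 \lesssim \|\psi\|_{H_{-s}(\Sigma)}^2$. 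The latter uses the local finiteness of the cover and the uniform bounds on derivatives of $\tilde h_\gamma$ (so that multiplication by $\tilde h_\gamma$ is uniformly bounded on $H_{-s}$); this is the kind of estimate already implicitly used in \cite{conny} for the norm-independence of $H_s$.

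For nondegeneracy and surjectivity onto the dual, I would argue as follows. Given a bounded functional $\Lambda$ on $H_{-s}(\Sigma,E|_\Sigma)$, I want $\phi \in H_s(\Sigma,E|_\Sigma)$ with $\Lambda(\psi) = \langle\phi,\psi\rangle_{L^2}$ for all $\psi \in \Gamma_c^\infty$. Transport the problem chart by chart: $\psi \mapsto \Lambda(h_\gamma \psi)$ pulls back to a bounded functional on $H_{-s}(\R^{n-1},\mathbb{C}^r)$, hence by the Euclidean perfect pairing is represented by some $\phi_\gamma \in H_s(\R^{n-1},\mathbb{C}^r)$; pushing back and summing $\phi := \sum_\gamma \xi_\gamma^*(\text{a cutoff of }\phi_\gamma)$ gives a candidate, and one checks $\phi \in H_s(\Sigma)$ using that the operator norms of the local functionals are controlled by $\|\Lambda\|$ and the local finiteness of the cover, together with $\sum_\gamma \|\phi_\gamma\|_{H_s}^2 \lesssim \|\Lambda\|^2$. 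That $\langle\phi,\psi\rangle = \Lambda(\psi)$ is then verified by testing against $\psi$ supported in a single chart and summing via $\sum_\gamma h_\gamma = 1$. Injectivity of $H_s(\Sigma) \to H_{-s}(\Sigma)^*$ is the special case: if $\langle\phi,\psi\rangle = 0$ for all smooth compactly supported $\psi$, then locally $\xi_{\gamma*}(h_\gamma\phi)$ pairs to zero against all test functions, hence vanishes by Euclidean nondegeneracy, hence $\phi = 0$. The one real technical point — and the step I expect to be the main obstacle to write out carefully — is the uniform boundedness of the multiplication operators $\psi \mapsto h_\gamma\psi$ and $\psi \mapsto \tilde h_\gamma \psi$ on the negative-order space $H_{-s}$, uniformly in $\gamma$, since for $s>0$ this is not merely Leibniz on derivatives but requires either duality with the positive-order statement or a commutator/pseudodifferential argument; however, this is exactly the content underlying \cite[Theorem 4.9, 5.11, Lemma 5.13]{conny}, so it may simply be cited.
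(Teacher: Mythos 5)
Your proposal is correct and follows essentially the same route as the paper, which simply lifts the Euclidean perfect pairing of $H_s$ and $H_{-s}$ (cited from Taylor, Section I.3) to $\Sigma$ via the same Fermi-coordinate/synchronous-trivialization localization used for the Trace Theorem. The uniform boundedness of multiplication by the cutoffs on $H_{-s}$ that you flag is indeed the only technical point, and it is exactly what the cited norm-equivalence results in \cite{conny} provide, so citing them as you suggest is what the paper implicitly does.
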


\begin{proof}
 This is also proven in the same way as above -- by lifting the corresponding result from the Euclidean case \cite[Section I.3]{taylor_81}.
\end{proof}

The Trace Theorem now allows to extend the allowed domain for the spinors in the Equalities \eqref{Lich} and \eqref{L2-structure_mod_boundary}.

\begin{lem}\label{extequ}  For all $\phi,\psi\in H_1(M, \SS_M)$, Equalities \eqref{Lich} and  \eqref{L2-structure_mod_boundary} hold.
\end{lem}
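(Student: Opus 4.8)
The plan is to extend the two identities from $\Gamma_c^\infty(M,\SS_M)$ to all of $H_1(M,\SS_M)$ by a density-and-continuity argument, using the Trace Theorem to control the boundary terms. First I would fix $\phi,\psi\in H_1(M,\SS_M)$ and, by definition of $H_1$ as the closure of $\Gamma_c^\infty(M,\SS_M)$ in the $H_1$-norm, choose sequences $\phi_k,\psi_k\in\Gamma_c^\infty(M,\SS_M)$ with $\phi_k\to\phi$ and $\psi_k\to\psi$ in $H_1$. For each $k$ both \eqref{Lich} and \eqref{L2-structure_mod_boundary} hold with $\phi_k,\psi_k$ in place of $\phi,\psi$, so it suffices to check that every term in each identity passes to the limit.

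For \eqref{L2-structure_mod_boundary}: the bulk terms $(D\psi_k,\varphi_k)$ and $(\psi_k,D\varphi_k)$ converge because $D\colon H_1\to L^2$ is bounded (this is \eqref{equ_H1D_easydir}) and the $L^2$-product is continuous. For the boundary term, I would invoke Theorem \ref{trace_theorem} with $s=1$: the restriction map $R\colon H_1(M,\SS_M)\to H_{1/2}(\Sigma,\SS_M|_\Sigma)$ is bounded, so $R\phi_k\to R\phi$ and $R\psi_k\to R\psi$ in $H_{1/2}(\Sigma)$, hence in $L^2(\Sigma)$ by Remark \ref{rem_app}(ii); Clifford multiplication by $\nu$ is a bounded (pointwise isometric) bundle endomorphism, so $\nu\cdot R\psi_k\to\nu\cdot R\psi$ in $L^2(\Sigma)$ as well, and the pairing $\int_{\partial M}\langle\nu\cdot R\psi_k,R\varphi_k\rangle\,ds$ converges to $\int_{\partial M}\langle\nu\cdot R\psi,R\varphi\rangle\,ds$ by continuity of the $L^2(\Sigma)$-scalar product. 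This gives \eqref{L2-structure_mod_boundary} for $\phi,\psi\in H_1$.

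For \eqref{Lich}: the terms $\Vert\phi_k\Vert_{L^2}^2$, $\Vert D\phi_k\Vert_{L^2}^2$ converge as above; the curvature terms $\int_M\frac{\s^M}{4}|\phi_k|^2$ and $\int_M\frac{i}{2}\langle\Omega\cdot\phi_k,\phi\rangle$ converge because $\s^M$ and $\Omega$ are bounded (bounded geometry of $M$ and of $L$), so these are bounded quadratic/bilinear forms on $L^2$ and $L^2$-convergence suffices. The only delicate term is the boundary integral $\int_\Sigma\langle R\phi_k,D^W(R\phi_k)\rangle\,ds$, since $D^W=\widetilde{D}^\Sigma-\tfrac n2 H$ involves a first-order operator on $\Sigma$; here I would use that $R\phi_k\to R\phi$ in $H_{1/2}(\Sigma)$ together with the fact (Remark \ref{rem_app}(iii)) that $\widetilde{D}^\Sigma$ extends to a bounded operator $H_{1/2}(\Sigma)\to H_{-1/2}(\Sigma)$, and interpret the integral as the perfect pairing $H_{1/2}(\Sigma)\times H_{-1/2}(\Sigma)\to\mathbb{C}$ from Lemma \ref{pairing-Sobolev}; continuity of that pairing, boundedness of $H$ (bounded mean curvature), and the convergence $R\phi_k\to R\phi$ in $H_{1/2}$ then give convergence of this term. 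Assembling the limits yields \eqref{Lich} for $\phi,\psi\in H_1(M,\SS_M)$.

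The main obstacle is precisely the boundary Dirac-Witten term in \eqref{Lich}: naively it only makes sense for smooth spinors, and one cannot bound it by the $L^2(\Sigma)$-norms of the traces alone. The key realization is that it must be read as the $H_{1/2}$–$H_{-1/2}$ duality pairing of Lemma \ref{pairing-Sobolev}, with $\widetilde{D}^\Sigma$ acting $H_{1/2}(\Sigma)\to H_{-1/2}(\Sigma)$ as in Remark \ref{rem_app}(iii); once this interpretation is in place, the passage to the limit is routine. A minor point to record is that the right-hand side of \eqref{Lich}, once extended, is manifestly independent of the approximating sequence, so the extended identities are well-defined.
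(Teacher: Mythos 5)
Your proposal is correct and follows essentially the same route as the paper's proof: approximate in $H_1$ by smooth compactly supported spinors, use the Trace Theorem and the bounded geometry of $(M,\Sigma)$ and $L$ to pass the bulk and curvature terms to the limit, handle the $\nu$-boundary term in $L^2(\Sigma)$, and read the Dirac--Witten boundary term as the $H_{1/2}$--$H_{-1/2}$ pairing of Lemma \ref{pairing-Sobolev} with $\widetilde{D}^{\Sigma}\colon H_{1/2}(\Sigma)\to H_{-1/2}(\Sigma)$ as in Remark \ref{rem_app}(iii). No gaps.
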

\begin{proof} The proof is a more or less straightforward usage of the Trace Theorem \ref{trace_theorem} and the corresponding equalities on $\Gamma_c^\infty (M,\SS_M)$. Indeed, let $\phi_i$ be a sequence in $\Gamma_c^ \infty(M, \SS_M)$ with $\phi_i\to \phi$ in
$H_1(M, \SS_M)$. The Trace Theorem \ref{trace_theorem} gives $R\phi_i\to R\phi$ in
$H_{\frac{1}{2}}(\Sigma, \SS_M|_\Sigma)$ and, hence, $\widetilde{D}^{\Sigma} R\phi_i\to \widetilde{D}^{\Sigma}R\phi$ in
$H_{-\frac{1}{2}}(\Sigma, \SS_M|_\Sigma)$, cf. Remark \ref{rem_app}.iii. Clearly, 
$\Vert \phi_i-\phi\Vert_{H_1}\to 0$ and with \eqref{equ_H1D_easydir}, this implies 
$\Vert \phi_i-\phi\Vert_{D}\to 0$. Moreover, the bounded geometry of $(M, \Sigma)$ 
implies 
$$\int_{M}
{\s^M}|\phi_i|^2 dv \to \int_{M} {\s^M}|\phi|^2 dv,\ 
\int_{\Sigma} H|\phi_i|^2 ds \to \int_{\Sigma} H|\phi|^2 ds, \text{\ and}$$
$$\left|
\int_{M} \<\Omega\cdot\phi_i, \phi_i \> dv - \int_{M} \<\Omega\cdot\phi, \phi \>
dv\right| \leq  \left( \Vert \phi_i-\phi\Vert_{L^2}\Vert \phi\Vert_{L^2}+\Vert \phi_i\Vert_{L^2}\Vert \phi_i-\phi\Vert_{L^2}\right)\sup_M |\Omega|\to
0.$$ 
Note that due to the bounded geometry of $L$, $\sup_M |\Omega|$ is finite. It remains to consider the term $\int_{\Sigma} \langle R\phi,
\widetilde{D}^{\Sigma}R\phi\rangle ds$. First we note that due to the pairing in Lemma
\ref{pairing-Sobolev}, the Trace Theorem \ref{trace_theorem}, and $\widetilde{D}^{\Sigma}: H_{\frac{1}{2}}(\Sigma, \SS_M|_\Sigma)
\to H_{-\frac{1}{2}}(\Sigma, \SS_M|_\Sigma)$, this expression is finite for
all $\phi\in H_{1}(M, \SS_M)$. Abbreviating $R\phi$ by $\phi$,  we have
\begin{align*}
 | (\widetilde{D}^{\Sigma}\phi_i,\phi_i)_\Sigma - (\widetilde{D}^{\Sigma}\phi,\phi)_\Sigma| & \leq
 | (\widetilde{D}^{\Sigma}\phi_i, \phi - \phi_i)_\Sigma| +
|(\widetilde{D}^{\Sigma}\phi-\widetilde{D}^{\Sigma}\phi_i,\phi)_\Sigma|\\
 & \leq \Vert \widetilde{D}^{\Sigma}\phi_i\Vert_{H_{-\frac{1}{2}}} \Vert \phi -
\phi_i\Vert_{H_{\frac{1}{2}}} + \Vert
\widetilde{D}^{\Sigma}\phi-\widetilde{D}^{\Sigma}\phi_i\Vert_{H_{-\frac{1}{2}}} \Vert \phi
\Vert_{H_{\frac{1}{2}}},
 \end{align*}
 which gives the convergence of the last term. This proves Equality \eqref{Lich} for all $\phi\in H_1(M, \SS_M)$. 
Now, let $\phi_i,\psi_j$ be sequences in $\Gamma_c^\infty(M, \SS_M)$ with $\phi_i\to \phi$ and $\psi_j\to \psi$ in
$H_1(M, \SS_M)$. Then,
\begin{align*} |(D\psi_j,\phi_i)-(D\psi,\phi)|&\leq  |(D\psi_j,\phi_i)-(D\psi_j,\phi)|+|(D\psi_j,\phi)-(D\psi,\phi)|\\
&\leq  \Vert D\psi_j\Vert_{L^2} \Vert\phi_i-\phi\Vert_{L^2}+\Vert D(\psi_j-\psi)\Vert_{L^2} \Vert\phi\Vert_{L^2}.
\end{align*}
Using \eqref{equ_H1D_easydir} and that $\phi_i$ and $\psi_j$ are uniformly bounded in $H_1$, we get for a certain constant $C>0$ that \[ |(D\psi_j,\phi_i)-(D\psi,\phi)|\leq  C\Vert \phi_i-\phi\Vert_{L^2}+ C\Vert \psi_j-\psi\Vert_{H_1}\to 0.\]
Analogously, one obtains $(\psi_j, D\phi_i)\to (\psi, D\phi)$. Moreover, using again the Trace Theorem \ref{trace_theorem},
we get
\begin{align*} \left| \int_{\Sigma} \langle \nu\cdot R\psi_j, R\phi_i\rangle - \langle \nu\cdot R\psi_j, R\phi\rangle ds\right|&\leq \Vert R\psi_j\Vert_{L^2(\Sigma)} \Vert R(\phi_i-\phi)\Vert_{L^2(\Sigma)}\\
 &\leq C \Vert \psi_j\Vert_{H_1} \Vert \phi_i-\phi\Vert_{H_1}\to 0.
\end{align*}
In the same way, $ \left| \int_{\Sigma} \langle \nu\cdot R\psi_j, R\phi\rangle - 
\langle \nu\cdot R\psi, R\phi\rangle ds\right|\to 0$. Hence, \[ \left| \int_{\Sigma} \langle \nu\cdot R\psi_j, R\phi_i\rangle - \langle \nu\cdot R\psi, R\phi\rangle ds\right|\to 0.\]
This proves Equality \eqref{L2-structure_mod_boundary} for all $\phi, \psi\in H_1(M, \SS_M)$.
\end{proof}

\subsection{Extension and the graph norm}
\paragraph{\bf Spectral decomposition of the boundary}

Let $(M,\Sigma)$ be of bounded geometry. Then, $(\Sigma, g|_\Sigma)$ is complete and, thus, the  Dirac operator $D^\Sigma$ on $\SS_\Sigma$, and thus also $\widetilde{D}^{\Sigma}$ on $\SS_M|_\Sigma$, is self-adjoint.

Let $\{E_I\}_{I\subset \mathbb{R}}$ be the family of projector-valued measures
belonging to the self-adjoint operator $$\widetilde{D}^{\Sigma}: H_1(\Sigma,
\SS_M|_\Sigma) \subset L^2(\Sigma, \SS_M|_\Sigma)\to L^2(\Sigma,
\SS_M|_\Sigma).$$
We define for a connected (not necessarily bounded) interval $I\in \R$ the spectral projection
\begin{align*}
\pi_{I}: L^2(\Sigma, \SS_M|_{\Sigma})\to L^2(\Sigma,
\SS_M|_{\Sigma}),\ \phi\mapsto E_{I}\phi
\end{align*}
and the spaces
\[\Gamma^{\rm APS}_{I}=\{ \phi\in L^2(\Sigma, \SS_M|_{\Sigma})\ |\
\phi=\pi_{I} \phi \}.\]

Next we will show that for every $s\in \mathbb{R}$ the spectral projections extend to bounded linear maps from $H_s(\Sigma, \SS_M|_\Sigma)$ to itself: Firstly, we note that the spectral projections commute with $\widetilde{D}^{\Sigma}$. Moreover, since $(\Sigma, g|_\Sigma)$ has bounded geometry, the norm $\Vert \phi\Vert_{L^2}^2+ \Vert D^k \phi\Vert_{L^2}^2$ and the $H_k$-norm are equivalent on $\Gamma_c^\infty(\Sigma, \SS_M|_\Sigma)$ for $k\in \mathbb{N}_0$, cp. \cite[Lemma 3.1.6]{Ammha}.
Hence, $\pi_I$ restricts to a bounded linear map from $H_k(\Sigma, \SS_M|_\Sigma)$ to itself for $k\in \mathbb{N}_0$. Let now $k$ be a negative integer, $\phi\in \Gamma_c^\infty(\Sigma, \SS_M|_\Sigma)$ and $\psi\in H_{-k}(\Sigma, \SS_M|_\Sigma)$. Using that $\pi_I$ is symmetric w.r.t.   $L^2$-product on $(\Sigma, \SS_M|_\Sigma)$ and Lemma \ref{pairing-Sobolev} we get
\begin{align*}
 |(\pi_I \phi, \psi)_\Sigma|=|( \phi, \pi_I \psi)_\Sigma|\leq C\Vert \phi\Vert_{H_{-k}(\Sigma)}\Vert \pi_I \psi\Vert_{H_k(\Sigma)}\leq C'\Vert \phi\Vert_{H_{-k}(\Sigma)}\Vert \psi\Vert_{H_k(\Sigma)}.
\end{align*}
Thus, $\pi_I$ extends to a bounded linear map from $H_k(\Sigma, \SS_M|_\Sigma)$ to itself for all nonnegative integers $k$. Then by Riesz-Thorin Interpolation Theorem we get that $\pi_I: H_s(\Sigma, \SS_M|_\Sigma)\to H_s(\Sigma, \SS_M|_\Sigma)$ for all $s\in \mathbb{R}$.

We abbreviate $\pi_{>}=\pi_{(0,\infty)}$ and $\pi_{\leq}=\pi_{(-\infty, 0]}$. As in \cite[Section 5]{baer_ballmann_11}, we define for $\phi\in \Gamma_c^\infty(\Sigma, \SS_M|_\Sigma)$
\begin{align*}
 \Vert \phi\Vert_{\check{H}}^2&= \Vert \pi_{\leq }\phi\Vert_{H_\frac{1}{2}(\Sigma)}^2 + \Vert \pi_{>}\phi\Vert_{H_{-\frac{1}{2}}(\Sigma)}^2\ \text{\ and\ }
\Vert \phi\Vert_{\hat{H}}^2= \Vert \pi_{\leq }\phi\Vert_{H_{-\frac{1}{2}}(\Sigma)}^2 + \Vert \pi_{>}\phi\Vert_{H_{\frac{1}{2}}(\Sigma)}^2
\end{align*}
and the spaces \begin{align}\label{H_spaces}
\check{H}:=\overline{\Gamma_c^\infty(\Sigma, \SS_M|_\Sigma)}^{\Vert.\Vert_{\check{H}}}\ 
\text{\ and\ }\  \hat{H}:=\overline{\Gamma_c^\infty(\Sigma, \SS_M|_\Sigma)}^{\Vert.\Vert_{\hat{H}}}.\end{align}

\paragraph{\bf Local description of the graph norm on $(M,\Sigma)$.}

Let $(M,g)$ be a manifold with boundary $\Sigma$. Let $(U_\gamma, \kappa_\gamma, \xi_\gamma, h_\gamma)_\gamma$ be Fermi coordinates on  $(M,g)$ together with a synchronous trivialization $\xi_\gamma$ and a partition of unity $h_\gamma$. 

\begin{lemma} \label{lem_equiv_cutoff}
On  $\Gamma^\infty_c(M, \SS_M)$ the norms $\Vert.\Vert_D$ and $\left( \sum_\gamma \Vert h_\gamma .\Vert_D^2\right)^\frac{1}{2}$ are equivalent.
\end{lemma}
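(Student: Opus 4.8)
The plan is to show the two norms are equivalent by establishing inequalities in both directions on $\Gamma_c^\infty(M,\SS_M)$. One direction is essentially trivial: the partition of unity $(h_\gamma)_\gamma$ is subordinate to a uniformly locally finite cover, so there is a uniform bound $N$ on the number of charts meeting any point, and each $h_\gamma$ together with its derivatives is uniformly bounded by Lemma~\ref{part_un}. Writing $\phi=\sum_\gamma h_\gamma\phi$ and using that for a fixed $x$ only $N$ terms are nonzero, a Cauchy--Schwarz argument gives $\Vert\phi\Vert_{L^2}^2\leq N\sum_\gamma\Vert h_\gamma\phi\Vert_{L^2}^2$ and similarly $\Vert D\phi\Vert_{L^2}^2\leq 2N\sum_\gamma(\Vert h_\gamma D\phi\Vert_{L^2}^2+\Vert (\grad h_\gamma)\cdot\phi\Vert_{L^2}^2)$, where the last sum is controlled by $(\sup_\gamma\Vert\grad h_\gamma\Vert_\infty^2)\,N\,\Vert\phi\Vert_{L^2}^2$. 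Since $D(h_\gamma\phi)=h_\gamma D\phi+(\grad h_\gamma)\cdot\phi$, this yields $\Vert\phi\Vert_D^2\leq C\big(\sum_\gamma\Vert h_\gamma\phi\Vert_D^2\big)$ for a constant $C$ depending only on $N$ and the $C^1$-bounds on the $h_\gamma$.

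For the reverse inequality, I would again use $D(h_\gamma\phi)=h_\gamma D\phi+(\grad h_\gamma)\cdot\phi$ to get $\Vert h_\gamma\phi\Vert_D^2\leq 2\Vert h_\gamma\phi\Vert_{L^2}^2+4\Vert h_\gamma D\phi\Vert_{L^2}^2+4\Vert(\grad h_\gamma)\cdot\phi\Vert_{L^2}^2$, then sum over $\gamma$. The terms $\sum_\gamma\Vert h_\gamma\phi\Vert_{L^2}^2$ and $\sum_\gamma\Vert h_\gamma D\phi\Vert_{L^2}^2$ are bounded by $\sup_\gamma\Vert h_\gamma\Vert_\infty^2$ times $N\Vert\phi\Vert_{L^2}^2$ and $N\Vert D\phi\Vert_{L^2}^2$ respectively, using uniform local finiteness of the supports. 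The crossed term $\sum_\gamma\Vert(\grad h_\gamma)\cdot\phi\Vert_{L^2}^2\leq(\sup_\gamma\Vert\grad h_\gamma\Vert_\infty^2)\,N\,\Vert\phi\Vert_{L^2}^2$. Altogether $\sum_\gamma\Vert h_\gamma\phi\Vert_D^2\leq C'\Vert\phi\Vert_D^2$, which completes the equivalence.

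The only point that needs a little care is that the constants from Lemma~\ref{part_un} are stated for the coordinate expressions $h_\gamma\circ\kappa_\gamma$ rather than for $\grad h_\gamma$ intrinsically; one needs the bounded geometry hypothesis to pass between coordinate derivatives and the Riemannian gradient, i.e.\ to know that the metric coefficients and their inverse in the Fermi charts are uniformly controlled. This is exactly what Definition~\ref{def_synch} and the bounded geometry assumption provide, so $\sup_\gamma\Vert\grad h_\gamma\Vert_\infty<\infty$ follows. I do not expect any genuine obstacle here; the only mild subtlety is bookkeeping the uniform local finiteness constant $N$ and making sure all the implied constants are independent of $\gamma$ and of $\phi$, which is where bounded geometry does its work.
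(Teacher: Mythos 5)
Your proposal is correct and follows essentially the same route as the paper's proof: both directions rest on the Leibniz identity $D(h_\gamma\phi)=h_\gamma D\phi+\nabla h_\gamma\cdot\phi$, the uniform local finiteness of the cover, and the uniform bounds on the $h_\gamma$ and their derivatives from Lemma~\ref{part_un}. Your closing remark that one needs the uniform control of the metric in the Fermi charts (bounded geometry) to convert the coordinate bounds of Lemma~\ref{part_un} into a uniform bound on $|\nabla h_\gamma|$ is a point the paper leaves implicit, but it is the same argument.
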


\begin{proof} All the constants $c_i$ involved here are positive.
 Let $\phi\in \Gamma_c^\infty(M,\SS_M)$. Since the cover $U_\gamma$ is uniformly locally finite the norms $\Vert.\Vert_{L^2}$ and $\left(\sum_\gamma \Vert h_\gamma . \Vert_{L^2}^2\right)^{\frac{1}{2}}$ are equivalent.
Thus, 
\begin{align*}
 \Vert D\phi\Vert_{L^2}^2& \leq c_1 \sum_\gamma \Vert h_\gamma D\phi\Vert_{L^2}^2=c_1 \sum_\gamma \Vert D(h_\gamma \phi)-\nabla h_\gamma\cdot \phi\Vert_{L^2}^2\\
&\leq c_2 \sum_\gamma (\Vert D(h_\gamma \phi)\Vert_{L^2}^2 + \Vert \nabla h_\gamma\cdot \phi\Vert_{L^2}^2)\leq c_3 \sum_\gamma (\Vert D(h_\gamma \phi)\Vert_{L^2}^2 + \Vert \phi|_{U_\gamma}\Vert_{L^2}^2)\\
&\leq c_3\sum_\gamma \Vert D(h_\gamma \phi)\Vert_{L^2}^2 + c_4 \Vert \phi\Vert_{L^2}^2
\end{align*}
where the end of the second line follows by Lemma \ref{part_un}, and the last inequality follows since the cover $U_\gamma$ is uniformly locally finite. Hence, $\Vert \phi\Vert_D^2\leq c_5 \sum_\gamma \Vert h_\gamma \phi\Vert_D^2$.

Conversely we get analogously
\begin{align*}
 \sum_\gamma \Vert D(h_\gamma \phi)\Vert_{L^2}^2&= \sum_\gamma \Vert h_\gamma D \phi+\nabla h_\gamma\cdot \phi\Vert_{L^2}^2\leq c_6  \Vert \phi\Vert_{D}^2.
\end{align*} \end{proof}

\begin{lemma}\label{local_inv}
 Let $(\Sigma, g|_\Sigma)$ be a manifold of bounded geometry. Then, there is an $\epsilon>0$ smaller than the injectivity radius of $\Sigma$ and a constant $c>0$ such that for all $x\in \Sigma$ and $\phi\in \Gamma^\infty_c(B_\epsilon(x)\subset N, \SS_N)$  we have $\Vert D^N\phi\Vert_{L^2}>c\Vert \phi\Vert_{L^2}$.
\end{lemma}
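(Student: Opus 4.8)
Write $N$ for the manifold of bounded geometry in the statement (so $N=\Sigma$). The idea is to combine the Schr\"odinger--Lichnerowicz formula on $N$ with a Friedrichs (Poincar\'e) inequality on small geodesic balls whose constant scales like $\epsilon^2$ and is uniform in the centre. Since $\phi$ has compact support in the interior of the ball, all integrations by parts are free of boundary terms, so \eqref{sl} written on $N$ integrates to
\[
\Vert D^N\phi\Vert_{L^2}^2=\Vert\nabla\phi\Vert_{L^2}^2+\tfrac14\int_N\s^N|\phi|^2\,dv+\tfrac{\i}{2}\int_N\<\Omega^N\cdot\phi,\phi\>\,dv .
\]
Because $(M,\Sigma)$ and $L$ are of bounded geometry, so are $N=\Sigma$ and the restricted line bundle $L|_N$; hence $\s^N$ and $\Omega^N$ are bounded on $N$, and the last two terms are bounded in absolute value by $K\Vert\phi\Vert_{L^2}^2$ for a constant $K<\infty$ independent of $x$. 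Therefore $\Vert D^N\phi\Vert_{L^2}^2\ge\Vert\nabla\phi\Vert_{L^2}^2-K\Vert\phi\Vert_{L^2}^2$.

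\textbf{The Friedrichs input.} Fix $\epsilon$ below the (positive) injectivity radius of $\Sigma$ and small enough that, by the two-sided curvature bounds of bounded geometry and Rauch comparison, each $\exp_x\colon B_\epsilon(0)\subset T_xN\to B_\epsilon(x)$ is a diffeomorphism that is bi-Lipschitz with constants depending only on $n$ and the curvature bound, uniformly in $x$. For a function $f$ with compact support in $B_\epsilon(x)$, pulling back by $\exp_x$, applying the Euclidean Friedrichs inequality on $B_\epsilon(0)$ (whose constant is $\epsilon^2/\mu_1$, $\mu_1$ the first Dirichlet eigenvalue of the unit ball), and using the uniform metric comparison gives $\Vert f\Vert_{L^2}\le C\epsilon\Vert\nabla f\Vert_{L^2}$ with $C=C(n)$. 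Applying this to $f=|\phi|$ and using Kato's inequality $|\nabla|\phi||\le|\nabla\phi|$ yields $\Vert\nabla\phi\Vert_{L^2}^2\ge (C^2\epsilon^2)^{-1}\Vert\phi\Vert_{L^2}^2$ for every $\phi\in\Gamma_c^\infty(B_\epsilon(x),\SS_N)$ and every $x\in\Sigma$. Combining with the previous inequality gives $\Vert D^N\phi\Vert_{L^2}^2\ge\big((C^2\epsilon^2)^{-1}-K\big)\Vert\phi\Vert_{L^2}^2$; shrinking $\epsilon$ once more so that $(C^2\epsilon^2)^{-1}\ge 2K$ gives $\Vert D^N\phi\Vert_{L^2}^2\ge\tfrac12(C^2\epsilon^2)^{-1}\Vert\phi\Vert_{L^2}^2$, so the claim holds with, say, $c=\tfrac1{2C\epsilon}$ (strictly, for $\phi\ne0$). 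All constants depend only on $n$ and the bounded-geometry data, hence are uniform in $x$.

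\textbf{Main difficulty.} The only point that needs care is that the Friedrichs constant on $B_\epsilon(x)$ must be uniform in $x$ and scale correctly as $\epsilon^2$; both are furnished by bounded geometry through uniform two-sided control of the metric in geodesic normal coordinates of a fixed small radius. An alternative is to compare $D^N|_{B_\epsilon(x)}$, after passing to normal coordinates and a synchronous trivialization, with the constant-coefficient Euclidean Dirac operator $D_0$ on $T_xN$ (for which $\Vert D_0\psi\Vert_{L^2}=\Vert\nabla^0\psi\Vert_{L^2}$ exactly), estimating $D^N-D_0$ by $O(\epsilon^2)\Vert\nabla\phi\Vert_{L^2}+O(\epsilon)\Vert\phi\Vert_{L^2}$; this works as well but requires tracking several error terms, whereas the Schr\"odinger--Lichnerowicz route makes the integration by parts exact.
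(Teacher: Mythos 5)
Your proof is correct, but it takes a genuinely different route from the paper. The paper works in geodesic normal coordinates: it pulls the metric back by $\exp^\Sigma_x$, writes $D^{\tilde g}$ as the Euclidean Dirac operator $D^E$ plus error terms controlled by $|b_i^j-\delta_i^j|\le Cr^2$ and $|\tilde\Gamma_{ij}^k|\le Cr$ (the formula of Ammann--Grosjean--Humbert--Morel), absorbs the first-order error using the equivalence of the graph and $H_1$-norms, and then quotes a lower bound $\Vert D^E\psi\Vert_{L^2}^2\ge A\Vert\psi\Vert_{L^2}^2$ for spinors compactly supported in $B_\epsilon(0)$ -- which is exactly the Euclidean Friedrichs input, since $\Vert D^E\psi\Vert_{L^2}=\Vert\nabla^0\psi\Vert_{L^2}$ there. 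You instead stay intrinsic: the Schr\"odinger--Lichnerowicz formula turns $\Vert D^N\phi\Vert_{L^2}^2$ into $\Vert\nabla\phi\Vert_{L^2}^2$ plus zeroth-order terms bounded by the curvature bounds on $\Sigma$ and on $L|_\Sigma$, and the spectral gap then comes from a uniform Friedrichs inequality on small geodesic balls via Kato's inequality. Both arguments ultimately rest on the same scaling fact (the $\epsilon^{-2}$ lower bound on small balls beats the uniformly bounded error terms), but your version avoids tracking the coordinate expansion of the Dirac operator, and it handles the $\Spinc$ case more transparently: the cited comparison formula is stated in the $\Spin$ setting, so the paper's proof silently suppresses the extra line-bundle connection term, whereas in your argument the line bundle enters only through the bounded term $\frac{\i}{2}\Omega^N\cdot$, for which the standing assumption of bounded geometry of $L$ is exactly what is needed. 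The only cosmetic points are the strict inequality (which, as you note, requires $\phi\neq 0$ -- the same issue is present in the paper's statement) and the need to record that both shrinkings of $\epsilon$ depend only on the bounded-geometry data, which you do.
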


\begin{proof}  Let $\exp^{\Sigma}_x: B_\epsilon(0)\subset \R^n\to B_\epsilon(x)\subset \Sigma$ be the exponential map. Set $\tilde{g}:= (\exp^{\Sigma}_x)^* g|_{B_{\epsilon}(x)}$. We will compare the Dirac operator $D^{\tilde{g}}$ with $D^E$, \cite[Proposition 3.2]{AGHM}:
 \[ D^{\tilde{g}}\phi = D^E\phi + \sum_{ij} (b_i^j-\delta_i^j)\partial_i \cdot \nabla_{\partial_j}\phi +\frac{1}{4}\sum_{ijk} \tilde{\Gamma}_{ij}^k e_i\cdot e_j\cdot e_k\cdot \phi\]
 where $\phi$ is a smooth spinor over $ B_{\epsilon}(0)$, $\partial_i$ and $e_i$ form an orthonormal basis w.r.t. the Euclidean metric and $\tilde{g}$, respectively. Moreover, $e_i=b_i^j\partial_j$, $\nabla$ is the Levi-Civita connection w.r.t. the Euclidean metric, and $\tilde{\Gamma}_{ij}^k$ are the Christoffel symbols w.r.t. the metric $\tilde{g}$. 
 By \cite[(11)-(13) and below]{AGHM} $|b_i^j-\delta_i^j|\leq C r^2$ and $|\tilde{\Gamma}_{ij}^k|\leq Cr$ where $r$ is the Euclidean distance to the origin and $C$ can be chosen to only depend on the global curvature bounds of $g$.
 Moreover, note that there is a positive constant $C$ also depending only on the global curvature bounds of $g$ such that $C^{-1}\leq f\leq C$ where ${\rm dvol}_{\tilde{g}}= f{\rm dvol}_{g_E}$. 
 Thus, for $\epsilon$ small enough  we can estimate for all smooth spinors $\phi$ compactly supported in $B_\epsilon(0)$
 that 
 \begin{align*}
  \frac{\Vert D^{\tilde{g}} \phi\Vert_{L^2(\tilde{g})}^2 }{\Vert \phi\Vert_{L^2(\tilde{g})}^2}
  &\geq   C_1 \frac{\Vert D^E \phi\Vert_{L^2(g_E)}^2}{\Vert \phi\Vert_{L^2(g_E)}^2} - C_2\epsilon^2 \frac{\Vert \nabla \phi\Vert_{L^2(g_E)}^2}{\Vert \phi\Vert_{L^2(g_E)}^2} - C_3\epsilon\\
  &\geq   C_4 \frac{\Vert D^E \phi\Vert_{L^2(g_E)}^2}{\Vert \phi\Vert_{L^2(g_E)}^2}  - C_5\epsilon
   \end{align*}
where the last step uses the equivalence of the graph norm and the $H_1$-norm .
Let $A$ be such that $\Vert D^E \psi\Vert_{L^2(g_E)}^2 \geq A \Vert \psi\Vert_{L^2(g_E)}^2$ for smooth spinors compactly supported in $B_{\epsilon}(0)$. Then one can always choose $\epsilon$ small enough that $C_4A-C_5\epsilon\geq 2^{-1}C_4A=:c$
Thus, the same is true for $D^g$ on $B_\epsilon(x)\subset \Sigma$.
\end{proof}

Let $(\hat{M}, \hat{N}=\partial \hat{M})$ be manifold of bounded geometry with closed boundary.
Let $\mathcal{E}_{\rm BB}$ be an extension map as defined in \cite[(43)]{baer_ballmann_11}. 
Let $D$ and $D^{\hat{N}}$ be the Dirac operators on $\hat{M}$ and $\hat{N}$, respectively. By \cite[Lemma 6.1, 6.2, (41) and below]{baer_ballmann_11} we have for all $\phi\in \Gamma_c^\infty (\hat{M}, \SS_{\hat{M}}|_{\hat{N}})$
\begin{align}\Vert \mathcal{E}_{\rm BB}R\phi\Vert_D \leq C \Vert \phi\Vert_D\label{BB_Thm}.\end{align} Note that $C$ can be chosen to  depend only on curvature bounds of $(\hat{M}, \hat{N})$ including mean curvature, the injectivity radii of $\hat{M}$ and $\hat{N}$, respectively, and the spectral gap of $D^{\hat{N}}$.\\

We now come back to our pair $(M, N)$: Let $\epsilon, c>0$ be constants such that Lemma \ref{local_inv} is fulfilled. 
Let $(U_\gamma, \kappa_\gamma, \xi_\gamma, h_\gamma)$ be Fermi coordinates together with a subordinated partition of unity such that there are $x_\gamma\in \Sigma$ with  $U_\gamma\cap \Sigma \subset B_{\epsilon} (x_\gamma)$. Let $\hat{U}_\gamma$ be a manifold with closed boundary $\hat{U}'_\gamma:=\partial \hat{U}_\gamma$ such that $\tilde{U}_\gamma :=U_\gamma\cup (\cup_{\alpha; U_\alpha\cap U_\gamma\neq \varnothing} U_\alpha)$ can be isometrically embedded in $\hat{U}_\gamma$, $\tilde{U}_\gamma\cap \Sigma\subset \hat{U}'_\gamma$, such that the spectral gap of the Dirac operator on $\hat{U}'_\gamma$ is at least $[-c, c]$ and such that the curvature, mean curvature of the boundary and the injectivity radii are still uniformly bounded in $\gamma$. 

Define the map $\tilde{\mathcal{E}}: \Gamma_c^\infty(\Sigma, \SS_M|_\Sigma)\to \Gamma_c^\infty(M, \SS_M)$  by 
\[ \tilde{\mathcal{E}}\psi= \sum_{\gamma,\alpha;\  U_\gamma'\neq \varnothing, U_\gamma\cap U_\alpha\neq \varnothing} h_\alpha \mathcal{E}_{\rm BB} (h_\gamma|_{\Sigma} \psi) \]
where $h_\gamma|_{\Sigma} \phi$ is understood to be a spinor on $U_\gamma\cap N \subset \hat{U}'_\gamma$ and then $\mathcal{E}_{\rm BB} (h_\gamma|_{\Sigma} \psi)$ is a spinor on $\hat{U}_\gamma$. The only reason why $\sum_\alpha h_\alpha$ appears in the definition is to assure that each summand can be seen to live on $M$ and that $R\tilde{\mathcal{E}}=\Id$. Note that just using $h_\gamma$ in front of $\mathcal{E}_{\rm BB}$ would be enough to first requirement but not the second. 

\begin{proposition}\label{workaround_image} Using the notations from above, 
there is a positive constant $C$ such that for all $\phi\in\Gamma_c^\infty(M, \SS_M)$  
\begin{align*}
 \Vert \tilde{\mathcal{E}}R\phi\Vert_D\leq C\Vert\phi\Vert_D.
\end{align*}
\end{proposition}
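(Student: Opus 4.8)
The plan is to reduce the global estimate to the already-established Bär--Ballmann estimate \eqref{BB_Thm} on the local model pieces $\hat U_\gamma$, using the equivalence of $\Vert.\Vert_D$ with the localized norm $\bigl(\sum_\gamma \Vert h_\gamma.\Vert_D^2\bigr)^{1/2}$ from Lemma~\ref{lem_equiv_cutoff} on both ends of the inequality. First I would write out $\tilde{\mathcal E}R\phi=\sum_{\gamma,\alpha} h_\alpha\mathcal{E}_{\rm BB}(h_\gamma|_\Sigma R\phi)$ and, fixing an index $\beta$, estimate $\Vert h_\beta\,\tilde{\mathcal E}R\phi\Vert_D$. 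Since the cover is uniformly locally finite, for each $\beta$ only finitely many pairs $(\gamma,\alpha)$ contribute, with the number of such pairs bounded uniformly in $\beta$; so by the triangle inequality and Cauchy--Schwarz in this finite sum it suffices to bound each $\Vert h_\beta h_\alpha \mathcal{E}_{\rm BB}(h_\gamma|_\Sigma R\phi)\Vert_D$ by a uniform constant times $\Vert h_\gamma R\phi\Vert_D$-type quantities, and then to sum the squares over $\gamma$.

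The key step is the passage to a fixed closed-boundary model. For each relevant $\gamma$ the spinor $\mathcal{E}_{\rm BB}(h_\gamma|_\Sigma R\phi)$ lives on $\hat U_\gamma$, and multiplication by $h_\beta h_\alpha$ (supported in $\tilde U_\gamma$, which embeds isometrically in $\hat U_\gamma$) does not move it off that model. On $\hat U_\gamma$ the estimate \eqref{BB_Thm} gives $\Vert \mathcal{E}_{\rm BB}(h_\gamma|_\Sigma R\phi)\Vert_D\le C\Vert h_\gamma|_\Sigma R\phi\Vert_D$, where by the remark after \eqref{BB_Thm} the constant $C$ depends only on curvature, mean curvature, injectivity radii, and the spectral gap of $D^{\hat U'_\gamma}$ --- all chosen uniformly in $\gamma$ by construction of the $\hat U_\gamma$ (here is exactly where the hypotheses of bounded geometry, together with the choice of $\epsilon,c$ from Lemma~\ref{local_inv}, are used). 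Multiplication by the cutoff $h_\beta h_\alpha$ contributes at worst a derivative-of-cutoff term controlled by Lemma~\ref{part_un}, so $\Vert h_\beta h_\alpha \mathcal{E}_{\rm BB}(h_\gamma|_\Sigma R\phi)\Vert_D\le C'\Vert h_\gamma|_\Sigma R\phi\Vert_D$ with $C'$ uniform. Finally I relate $\Vert h_\gamma|_\Sigma R\phi\Vert_D$ on $\hat U'_\gamma$ back to a piece of the graph norm of $\phi$ on $M$: the extension map $\mathcal{E}_{\rm BB}$ itself is adapted so that $\Vert h_\gamma|_\Sigma R\phi\Vert_D\le \Vert \mathcal{E}_{\rm BB}(h_\gamma|_\Sigma R\phi)\Vert_D$, or more directly one invokes \eqref{BB_Thm} read on the piece $\tilde U_\gamma$ with $\phi$ itself in place of the extension, giving $\Vert h_\gamma|_\Sigma R\phi\Vert_D\le C\Vert \phi|_{\tilde U_\gamma}\Vert_D$.

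Summing over $\gamma$: $\sum_\gamma \Vert \phi|_{\tilde U_\gamma}\Vert_D^2\le C\Vert\phi\Vert_D^2$ again by uniform local finiteness of the enlarged cover $(\tilde U_\gamma)_\gamma$, and by Lemma~\ref{lem_equiv_cutoff} the left side controls $\sum_\beta\Vert h_\beta\tilde{\mathcal E}R\phi\Vert_D^2$ from above, which in turn controls $\Vert\tilde{\mathcal E}R\phi\Vert_D^2$. Chaining the constants yields $\Vert\tilde{\mathcal E}R\phi\Vert_D\le C\Vert\phi\Vert_D$ for $\phi\in\Gamma_c^\infty(M,\SS_M)$, as claimed.

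The main obstacle I anticipate is bookkeeping the uniformity of all constants simultaneously: one must verify that the enlarged patches $\hat U_\gamma$ can be chosen with curvature bounds, mean-curvature bounds, injectivity radii, and --- most delicately --- a uniform spectral gap $[-c,c]$ for $D^{\hat U'_\gamma}$, all independent of $\gamma$. The spectral-gap requirement is what forces the use of Lemma~\ref{local_inv} (shrinking $\epsilon$ so that $D^N$ is uniformly invertible on $\epsilon$-balls, hence the closed boundary $\hat U'_\gamma$ containing such a ball inherits a gap), and making this compatible with the geometry bounds is the real content; the interchange of summation, cutoff commutators, and Cauchy--Schwarz are then routine given the uniform local finiteness.
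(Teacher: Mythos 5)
Your overall architecture — localize with Lemma \ref{lem_equiv_cutoff} and the uniform local finiteness of the cover, apply the B\"ar--Ballmann estimate patchwise with constants uniform in $\gamma$, and resum — is exactly the paper's proof. However, the central step as you wrote it misapplies \eqref{BB_Thm}. That estimate says $\Vert \mathcal{E}_{\rm BB}R\psi\Vert_D\leq C\Vert\psi\Vert_D$ for a \emph{bulk} spinor $\psi\in\Gamma_c^\infty(\hat{U}_\gamma,\SS_{\hat{U}_\gamma})$: both sides are graph norms on the manifold with boundary, and the argument of $\mathcal{E}_{\rm BB}$ must be exhibited as the trace of such a bulk spinor. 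Your two inequalities $\Vert \mathcal{E}_{\rm BB}(h_\gamma|_\Sigma R\phi)\Vert_D\leq C\Vert h_\gamma|_\Sigma R\phi\Vert_D$ and $\Vert h_\gamma|_\Sigma R\phi\Vert_D\leq C\Vert \phi|_{\tilde U_\gamma}\Vert_D$ involve a ``graph norm of a boundary spinor'' that is not defined and is in any case not what \eqref{BB_Thm} controls; the auxiliary claim that the extension map does not decrease the norm is nowhere established; and $\phi|_{\tilde U_\gamma}$ is not an admissible input for \eqref{BB_Thm} (it is not compactly supported in $\hat U_\gamma$, and its boundary trace is $R\phi|_{\tilde U_\gamma\cap\Sigma}$, not $h_\gamma|_\Sigma R\phi$). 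If you insist on passing through a boundary norm, the correct intermediate quantity is the $\check{H}(\hat U'_\gamma)$-norm via \cite[Lemmas 6.1 and 6.2]{baer_ballmann_11}, as recorded in Remark \ref{rem_norms}.i — not a graph norm.

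The fix is a single observation, which also makes your $h_\beta h_\alpha$ bookkeeping unnecessary: $h_\gamma|_\Sigma R\phi=R(h_\gamma\phi)$, and $h_\gamma\phi$ is a smooth spinor compactly supported in $U_\gamma\subset\hat U_\gamma$, so \eqref{BB_Thm} applied on $\hat U_\gamma$ gives directly
\begin{align*}
\Vert \mathcal{E}_{\rm BB}(h_\gamma|_\Sigma R\phi)\Vert_D=\Vert \mathcal{E}_{\rm BB}R(h_\gamma\phi)\Vert_D\leq C\Vert h_\gamma\phi\Vert_D,
\end{align*}
with $C$ independent of $\gamma$ because the $\hat U_\gamma$ were constructed with uniform curvature, mean curvature and injectivity bounds and a uniform spectral gap (this is where Lemma \ref{local_inv} enters, as you correctly anticipate). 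Then the chain closes exactly as in the paper: by the definition of $\tilde{\mathcal E}$ and Lemma \ref{lem_equiv_cutoff} (to absorb the cutoff factors), uniform local finiteness, the displayed estimate, and Lemma \ref{lem_equiv_cutoff} again, one gets $\Vert\tilde{\mathcal E}R\phi\Vert_D^2\leq C_2\sum_{\gamma}\Vert \mathcal{E}_{\rm BB}R(h_\gamma\phi)\Vert_D^2\leq C_3\sum_\gamma\Vert h_\gamma\phi\Vert_D^2\leq C\Vert\phi\Vert_D^2$. Your concern about the simultaneous uniformity of all constants is legitimate, but it is settled by the construction of the $\hat U_\gamma$ preceding the proposition rather than inside the proof itself.
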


\begin{proof} We abbreviate $h_\gamma':=h_\gamma|_\Sigma$. Using (in this order) the definition of $\tilde{\mathcal E}$, Lemma \ref{lem_equiv_cutoff} the uniform local finiteness of the cover $U_\gamma$, \eqref{BB_Thm}, and again Lemma \ref{lem_equiv_cutoff} we estimate 
\begin{align*}
 \Vert \tilde{\mathcal E}R\phi\Vert_D^2\leq& C_1 \left\Vert \sum_{\gamma, U'_\gamma\neq \varnothing}\mathcal{E}_{\rm BB} R(h_\gamma \phi)\right\Vert_D^2\leq C_2\sum_{\gamma, U'_\gamma\neq \varnothing} \Vert \mathcal{E}_{\rm BB} R(h_\gamma \phi)\Vert_D^2\\
\leq& 
C_3\sum_{\gamma, U'_\gamma\neq \varnothing} \Vert h_\gamma \phi\Vert_{D}^2\leq C \Vert \phi\Vert_D^2.
\end{align*}

\end{proof}

\section{Boundary value problems}\label{boundary_values}
The general theory of boundary value problems for elliptic differential operators of order one on complete 
manifolds with closed boundary can be found in \cite{baer_ballmann_11}. The aim of this section is to generalize a part of this theory to noncompact boundaries on manifolds
of bounded  geometry. We restrict to the part that gives existence of solutions of boundary value problems as in 
Theorem \ref{main}. The property needed to assure 
a solution to such a problem is the closedness of the range. For that we introduce a type of coercivity condition which 
in general can depend on the boundary values (that is not the case for closed boundaries).
Moreover, we restrict to the classical  $\Spinc$ Dirac operator.\\

In the first part, we first give some generalities on domains of the Dirac 
operator and introduce a coercivity condition that implies closed range of the Dirac operator. Then, 
 we extend the  trace map $R$ to the whole maximal domain of the Dirac operator 
and give some examples and properties of boundary conditions. In particular, we will introduce two boundary 
conditions $B_\pm$ which will be used to prove Theorem \ref{main} in Section \ref{proofmain}.
At the end, we give an existence result for boundary value problems in our context.\\

\paragraph{\bf General domains and closed range.} Let $D$ be the Dirac operator acting on $\Gamma_{cc}^\infty(M, \SS)$ on a manifold $M$ with boundary $\Sigma$. If we want to emphasize that $D$ acts on the domain $\Gamma_{cc}^\infty(M, \SS)$, we shortly write $D_{cc}$. We denote the graph norm of $D$ by 
\[ \Vert \phi\Vert_D^2=\Vert \phi\Vert_{L^2}^2+\Vert D \phi\Vert_{L^2}^2.\] 
By $D_{\mathrm{max}}:=(D_{cc})^*$ we denote the maximal extension of $D$. Here, $A^*$ denotes the adjoint operator of $A$ in the sense of functional analysis. Note that $H_1(M, \SS_M)\subset \dom\, D_{\mathrm{max}}$ and
 $$\dom\, D_{\mathrm{max}}=\{\phi\in L^2(M, \SS_M)\ |\ \exists \tilde{\phi}\in L^2(M, \SS_M) \forall \psi\in \Gamma_{cc}^\infty(M, \SS_M): (\tilde{\phi}, \psi)=(\phi, D\psi)\},$$
and together
with $\Vert. \Vert_D$, the space $\dom\, D_\mathrm{max}$ is a Hilbert space. Moreover, we denote by $D_{\mathrm{min}}:= (D_{cc})^{**} =\overline{D_{cc}}^{\Vert.\Vert_D}$ the minimal extension of $D$.  Here, $\overline{A}^{\Vert .\Vert_D}$ denotes the closure of the set $A$ w.r.t. the graph norm. Any closed linear subset of $\mathrm{dom}\, D_{\mathrm{max}}$ between $\mathrm{dom}\, D_{\mathrm{min}}$ and
 $\mathrm{dom}\, D_{\mathrm{max}}$ gives the domain of a closed extension of $D\colon \Gamma_{cc}^\infty (M, \SS_M) \to \Gamma_{cc}^\infty (M, \SS_M)$. Before examining those domains let us extend the trace map to $\dom\, D_{\rm max}$: \\
 
{\bf Extension of the trace map.} The Trace Theorem \ref{trace_theorem} extends the trace map 
\begin{eqnarray*}
 R: \Gamma_c^\infty(M,\SS_M) &\to& \Gamma_c^\infty(\Sigma,\SS_M|_\Sigma)\\
\phi&\mapsto& \phi|_\Sigma
\end{eqnarray*}
to a bounded map $R:H_1(M,\SS_M)\to H_{\frac{1}{2}}(\Sigma, \SS_M|_\Sigma)$.
Here, we will extend $R$ further to $\dom\, D_{\mathrm{max}}$. This will generalize the corresponding result \cite[Theorem 6.7.ii]{baer_ballmann_11} for closed boundaries to noncompact boundaries. Moreover, we give some auxiliary lemmata which are found in \cite{baer_ballmann_11} for closed boundaries. Some of the proofs and the order of obtaining them will be a little bit different since we do not use (and cannot use, cf. Example \ref{ex_bd_cond}.iv) the projection to the negative spectrum.Note that in this part we could use an arbitraray extension map as given by Theorem \ref{trace_theorem} and are not restricted to the explicit one defined via the eigenvalue decomposition of $\widetilde{D}^{\Sigma}$ on closed boundaries used in \cite{baer_ballmann_11}.
\begin{lem}\label{dense_in_graph_norm}
 The space $\Gamma_c^\infty(M,\SS_M)$ is dense in $\dom\, D_{\mathrm{max}}$ w.r.t. the graph norm.
\end{lem}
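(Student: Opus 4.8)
The goal is to show that $\Gamma_c^\infty(M,\SS_M)$ is dense in $\dom\, D_{\mathrm{max}}$ with respect to the graph norm $\Vert\cdot\Vert_D$. The obvious first move is to split the approximation into two independent steps: (1) cutting off a general element $\phi\in\dom\, D_{\mathrm{max}}$ to have compact support while controlling the graph norm, and (2) locally mollifying a compactly supported element of $\dom\, D_{\mathrm{max}}$ so as to replace it by a smooth spinor, still with small graph-norm error and still with support inside a fixed compact set (so that boundary values are retained). Step (1) relies on the bounded geometry of $(M,\Sigma)$; step (2) is essentially local elliptic regularity/convolution near the boundary combined with the interior case.

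\textbf{Step 1: compactly supporting the spinor.} Let $\phi\in\dom\, D_{\mathrm{max}}$. Using bounded geometry, choose a sequence of smooth cutoff functions $\chi_j\colon M\to[0,1]$ that are $1$ on larger and larger compact sets exhausting $M$, with $|\diff\chi_j|\leq C/j$ uniformly (this is possible because the injectivity radius is bounded below away from the boundary and there is a uniform collar, so one can build such Lipschitz cutoffs with uniformly controlled gradients; smooth them if desired). Each $\chi_j$ can be arranged to equal $1$ in a neighbourhood of $\Sigma\cap(\text{a fixed compact piece})$, but more importantly we do \emph{not} need to preserve the boundary values here — we only need $\chi_j\phi\to\phi$ in $\Vert\cdot\Vert_D$. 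Since $D(\chi_j\phi)=\chi_j D\phi+\grad\chi_j\cdot\phi$, we get $\Vert D(\chi_j\phi)-\chi_j D\phi\Vert_{L^2}\leq (C/j)\Vert\phi\Vert_{L^2}\to 0$, and $\chi_j\phi\to\phi$, $\chi_j D\phi\to D\phi$ in $L^2$ by dominated convergence. Hence $\chi_j\phi\to\phi$ in the graph norm, and each $\chi_j\phi$ lies in $\dom\, D_{\mathrm{max}}$ with compact support. Note $\chi_j\phi$ may touch the boundary, which is fine.

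\textbf{Step 2: smoothing a compactly supported element.} Now fix $\psi\in\dom\, D_{\mathrm{max}}$ with compact support $K$. Cover $K$ by finitely many of the Fermi charts $(U_\gamma,\kappa_\gamma)$ from Definition~\ref{def_synch}, with subordinate partition of unity $h_\gamma$; then $\psi=\sum_\gamma h_\gamma\psi$ is a finite sum, and it suffices to approximate each $h_\gamma\psi$, which in the synchronous trivialization $\xi_\gamma$ becomes a compactly supported $\mathbb{C}^r$-valued $L^2$-function on either $\R^n$ (interior chart) or the half-space $\R^n_+$ (boundary chart) whose image under the local expression of $D$ — a first-order operator with smooth bounded coefficients plus a zeroth-order term — is again in $L^2$. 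In the interior case one simply mollifies by convolution: $\psi_\varepsilon=\psi\ast\rho_\varepsilon$ is smooth, $\psi_\varepsilon\to\psi$ in $L^2$, and $D\psi_\varepsilon\to D\psi$ in $L^2$ because $D\psi_\varepsilon=(D\psi)\ast\rho_\varepsilon+[\text{commutator with smooth coefficients}]\ast(\text{terms}\to 0)$; more cleanly, write $D=\sum a^i\partial_i+b$ with $a^i,b$ smooth, then $D(\psi\ast\rho_\varepsilon)-(D\psi)\ast\rho_\varepsilon=\sum a^i(\partial_i\psi\ast\rho_\varepsilon)-\sum(a^i\partial_i\psi)\ast\rho_\varepsilon+(b\psi\ast\rho_\varepsilon-(b\psi)\ast\rho_\varepsilon)$, and each difference tends to $0$ in $L^2$ by a standard Friedrichs-lemma / commutator argument since $\partial_i\psi\in H_{-1}$ suffices with $a^i$ smooth. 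In the boundary case, one first translates the support slightly \emph{into} the interior — replace $\psi$ by $\tau_\delta^*\psi$ where $\tau_\delta$ pushes $\R^n_+$ a distance $\delta$ in the $\partial_{x_0}$-direction, extended by zero — which is continuous in the graph norm as $\delta\to0$ (again using that $D$ has bounded smooth coefficients, so translation is continuous on the relevant $L^2$-spaces and commutes up to lower-order errors with $D$), and then mollify with $\varepsilon\ll\delta$ so that the mollified function is supported away from $\partial\R^n_+$, hence trivially extends to a smooth compactly supported spinor; the translation does disturb the boundary trace, but that is acceptable at this stage since Lemma~\ref{dense_in_graph_norm} only asserts graph-norm density, not trace-preserving density.

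\textbf{Main obstacle.} The genuinely delicate point is Step 2 near the boundary: one must make sure the translate-then-mollify procedure converges in the graph norm \emph{uniformly enough to patch the finitely many charts together}, and that the commutator (Friedrichs) estimates are valid with merely $L^2$ data and first-order operator — this is exactly the classical Friedrichs mollifier lemma adapted to the half-space, and it is where the smoothness and boundedness of the coefficients (guaranteed by bounded geometry of $M$ and of $\SS_M$, the latter via bounded geometry of $L$) is used. Step 1 is routine once one has the uniformly-controlled cutoff functions from bounded geometry. I would organize the write-up as: (a) reduce to compactly supported $\phi$ via Step 1; (b) reduce to a single Fermi chart via the partition of unity; (c) in an interior chart, conclude by the Friedrichs mollifier lemma; (d) in a boundary chart, translate inward then apply (c).
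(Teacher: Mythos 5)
Your Step 1 (cut-off with uniformly small gradients) is fine and is exactly the paper's first move. The problem is the boundary-chart part of Step 2. Translating $\psi$ \emph{into} the interior and extending by zero does not stay in the maximal domain: if $\tilde\psi_\delta(x_0,x')=\psi(x_0-\delta,x')$ for $x_0\geq\delta$ and $0$ below, then integrating against a test spinor and using Green's formula \eqref{L2-structure_mod_boundary} shows that the distributional image $D\tilde\psi_\delta$ contains a surface term along $\{x_0=\delta\}$ weighted by $\nu\cdot R\psi$; this is not in $L^2$ unless the trace of $\psi$ vanishes, so $\tilde\psi_\delta\notin\dom\, D_{\mathrm{max}}$ and the claimed graph-norm continuity as $\delta\to0$ fails. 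There is also a structural way to see that the scheme cannot work: your boundary-chart approximants are supported away from $\partial\R_+^n$, hence away from $\Sigma$, so after patching the whole approximating sequence lies in $\Gamma_{cc}^\infty(M,\SS_M)$. Graph-norm limits of such spinors lie in $\dom\, D_{\mathrm{min}}$, which by Lemma \ref{norm_equ_0} is exactly $\{\phi\in\dom\, D_{\mathrm{max}}\ |\ R\phi=0\}$, a proper subspace of $\dom\, D_{\mathrm{max}}$ whenever $\Sigma\neq\emptyset$. So your argument, if it were correct, would prove $D_{\mathrm{min}}=D_{\mathrm{max}}$, which is false; "disturbing the boundary trace" is not an acceptable loss, it is the whole obstruction. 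The repair is to translate in the \emph{opposite} direction: set $\psi_\delta(x_0,x')=\psi(x_0+\delta,x')$, which is defined on a slightly larger half-space, remains in the maximal domain with $D\psi_\delta$ the translate of $D\psi$, and then mollify with $\varepsilon\ll\delta$. The resulting spinors are smooth up to the boundary with (generally nonzero) boundary values, i.e. genuine elements of $\Gamma_c^\infty(M,\SS_M)$ — recall that this space allows boundary values — and the Friedrichs commutator lemma then gives graph-norm convergence as you describe in the interior case.

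For comparison, the paper avoids the half-space mollification altogether: after the same cut-off step it observes that each $\eta_i\phi$ is compactly supported in a compact manifold $K_{i+1}$ with boundary, invokes the known density of smooth spinors (allowing boundary values) in the maximal domain for the compact/closed-boundary case, and then applies one more cut-off and a diagonal argument to land in $\Gamma_c^\infty(M,\SS_M)$. Your localization-plus-Friedrichs plan is a legitimate alternative route to that compact-case ingredient, but only once the direction of the translation near the boundary is corrected.
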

\begin{proof} For a closed boundary, this is done in \cite[Theorem 6.7.i]{baer_ballmann_11}. We use a different proof here. 
Let $\phi \in \dom\, D_{\mathrm{max}}$. Let $K_i$ be a compact exhaustion of $M$ that comes together with smooth  cut-off functions $\eta_i:M \to [0,1]$ such that $\eta_i=1$ on $K_i$, $\eta_i=0$ on $M\setminus K_{i+1}$ and $\max |d\eta_i|\leq \frac{2}{i}$. Then, 
$\phi_i=\eta_i\phi$ are compactly supported sections in $\dom\, D_{\max}$ fulfilling
\begin{eqnarray*}
 \Vert \phi_i-\phi\Vert_D^2 &=& \Vert \phi_i-\phi\Vert_{L^2}^2 + \Vert D\phi_i-D\phi\Vert_{L^2}^2\\ &\leq& 
\Vert (1-\eta_i)\phi\Vert_{L^2}^2 + \left( \Vert (1-\eta_i) D\phi\Vert_{L^2} +\frac{2}{i} \Vert \phi\Vert_{L^2} 
\right)^2\to 0.
\end{eqnarray*}
Each $\phi_i$  has now compact support in  $K_{i+1}$. Thus, there is a 
sequence $\phi_{ij}\in \Gamma_c^\infty (K_{i+1}, \SS_M)$ with $\phi_{ij}\to\phi_i$ in the graph 
norm on $K_{i+1}$. Choose $j=j(i)\geq i$ such that $\Vert \phi_{ij}-\phi_i\Vert_D\to 0$ as $i\to \infty$. Then, $\Vert \phi_{ij}-\phi\Vert_D \leq \Vert \phi_{ij}-\phi_i\Vert_D + \Vert \phi_i- \phi\Vert_D  \to 0$, too. Then
\begin{eqnarray*}
\Vert \eta_j\phi_{ij}-\phi_{ij}\Vert_D^2 &\leq& \Vert (1-\eta_j)\phi_{ij}\Vert_{L^2}^2 + (\Vert (1-\eta_j)D \phi_{ij}
\Vert_{L^2} + \Vert d\eta_j\cdot \phi_{ij}\Vert_{L^2}  )^2 
\\ &\leq& ( \Vert \phi_{ij}-\phi_i\Vert_{L^2} + \Vert (1-\eta_j)\eta_i\phi\Vert_{L^2})^2 +  
 \Big( \Vert D(\phi_{ij}-\phi_i)\Vert_{L^2}\\
 &&+\Vert(1-\eta_j)(\eta_i D\phi + d\eta_i \cdot \phi)\Vert_{L^2}  + \frac{2}{j} \Vert \phi_{ij}-\phi_i\Vert_{L^2} 
+ \frac{2}{j} \Vert \phi\Vert_{L^2}\Big)^2  \to 0
\end{eqnarray*}
for $i\to \infty$. Thus, we have a sequence $\hat{\phi}_i:=\eta_{j(i)}\phi_{ij(i)}\in \Gamma_c^ \infty(M, \SS_M)$ such that $\hat{\phi}_i\to \phi$ in the graph norm as $i\to \infty$.
\end{proof}

Note that the proof of Lemma \ref{dense_in_graph_norm} only uses the completeness of $M$ and not the bounded geometry.

\begin{thm}\label{extended-trace} 
 The trace map $R: \Gamma_{c}^\infty(M, \SS_M) \to \Gamma_{c}^\infty(\Sigma, \SS_M|_\Sigma)$ can be extended to a bounded operator 
 $$R: \dom\, D_{\mathrm{max}} \to H_{-\frac{1}{2}} (\Sigma, \SS_M|_\Sigma).$$
 \end{thm}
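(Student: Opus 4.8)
The plan is to extend $R$ from $\Gamma_c^\infty(M,\SS_M)$ (dense in $\dom\,D_{\mathrm{max}}$ by Lemma~\ref{dense_in_graph_norm}) to a bounded operator into $H_{-\frac12}(\Sigma,\SS_M|_\Sigma)$, which by density amounts to establishing the a priori estimate
\[
 \Vert R\phi\Vert_{H_{-\frac12}(\Sigma)} \leq C\,\Vert \phi\Vert_D \qquad \text{for all } \phi\in \Gamma_c^\infty(M,\SS_M).
\]
The idea is to test $R\phi$ against the perfect pairing $H_{-\frac12}(\Sigma,\SS_M|_\Sigma)\times H_{\frac12}(\Sigma,\SS_M|_\Sigma)\to\mathbb C$ of Lemma~\ref{pairing-Sobolev}: it suffices to bound $|(R\phi,\chi)_\Sigma|$ by $C\Vert\phi\Vert_D\Vert\chi\Vert_{H_{\frac12}(\Sigma)}$ for all $\chi\in\Gamma_c^\infty(\Sigma,\SS_M|_\Sigma)$. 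To produce such a test object inside $M$, I would use the bounded extension operator $\mathcal E\colon H_{\frac12}(\Sigma,\SS_M|_\Sigma)\to H_1(M,\SS_M)$ from the Trace Theorem~\ref{trace_theorem}, with $R\mathcal E=\Id$ and $\Vert\mathcal E\chi\Vert_{H_1}\le C\Vert\chi\Vert_{H_{\frac12}(\Sigma)}$; moreover $\mathcal E\chi$ has compact support when $\chi$ does.

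The key step is then the Green-type / integration-by-parts identity. For $\phi\in\Gamma_c^\infty(M,\SS_M)$ and $\eta=\mathcal E(\nu\cdot\chi)\in H_1(M,\SS_M)$ compactly supported, Equation~\eqref{L2-structure_mod_boundary} — valid for $H_1$ spinors by Lemma~\ref{extequ}, and in particular when one argument is merely in $\Gamma_c^\infty$ — gives
\[
 -\int_\Sigma \langle \nu\cdot R\eta,\, R\phi\rangle\, ds = (D\eta,\phi) - (\eta, D\phi).
\]
Since $R\eta = \nu\cdot\chi$ and $\nu\cdot\nu\cdot = -\Id$, the left side is $\int_\Sigma\langle\chi,R\phi\rangle\,ds = (R\phi,\chi)_\Sigma$ up to a sign/conjugation bookkeeping that I would track carefully using the Hermitian structure. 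The right side is bounded by $\Vert D\eta\Vert_{L^2}\Vert\phi\Vert_{L^2} + \Vert\eta\Vert_{L^2}\Vert D\phi\Vert_{L^2} \le \Vert\eta\Vert_{H_1}\bigl(\Vert\phi\Vert_{L^2}+\Vert D\phi\Vert_{L^2}\bigr)$ after using \eqref{equ_H1D_easydir} to control $\Vert D\eta\Vert_{L^2}$ by $\Vert\eta\Vert_{H_1}$, hence by $C\Vert\nu\cdot\chi\Vert_{H_{\frac12}(\Sigma)}\,\Vert\phi\Vert_D \le C'\Vert\chi\Vert_{H_{\frac12}(\Sigma)}\,\Vert\phi\Vert_D$, the last inequality because Clifford multiplication by the unit normal is a fiberwise isometry (and is parallel in the synchronous trivialization along the normal, so it is bounded on $H_{\frac12}(\Sigma)$ — this should be checked, or one can absorb it into the constant via bounded geometry). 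Taking the supremum over $\chi$ with $\Vert\chi\Vert_{H_{\frac12}(\Sigma)}\le 1$ and invoking Lemma~\ref{pairing-Sobolev} yields the desired bound on $\Vert R\phi\Vert_{H_{-\frac12}(\Sigma)}$.

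Finally, since $\Gamma_c^\infty(M,\SS_M)$ is dense in $(\dom\,D_{\mathrm{max}},\Vert\cdot\Vert_D)$ by Lemma~\ref{dense_in_graph_norm} and $H_{-\frac12}(\Sigma,\SS_M|_\Sigma)$ is complete, the estimate lets $R$ extend uniquely to a bounded operator $R\colon\dom\,D_{\mathrm{max}}\to H_{-\frac12}(\Sigma,\SS_M|_\Sigma)$. I expect the main obstacle to be purely bookkeeping: verifying that the pairing $(R\phi,\chi)_\Sigma$ produced by \eqref{L2-structure_mod_boundary} is exactly (a nonzero multiple of) the duality pairing of Lemma~\ref{pairing-Sobolev} — in particular that it is genuinely non-degenerate as $\chi$ ranges over $H_{\frac12}(\Sigma)$, so that taking the supremum recovers the full $H_{-\frac12}$ norm rather than a weaker seminorm — and making sure all the extension/Clifford-multiplication operations used are bounded uniformly, which is exactly where bounded geometry of $(M,\Sigma)$ and of $L$ (hence of $\SS_M$) enters.
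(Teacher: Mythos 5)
Your proposal is correct and follows essentially the same route as the paper: pair $R\phi$ against $H_{\frac12}$ test spinors via Lemma~\ref{pairing-Sobolev}, extend them into $M$ with $\mathcal{E}$ (twisted by Clifford multiplication with $\nu$), apply the Green formula \eqref{L2-structure_mod_boundary} through Lemma~\ref{extequ} together with \eqref{equ_H1D_easydir} and Theorem~\ref{trace_theorem} to get $|(R\phi,\chi)_\Sigma|\leq C\Vert\phi\Vert_D\Vert\chi\Vert_{H_{\frac12}(\Sigma)}$, and conclude by density (Lemma~\ref{dense_in_graph_norm}). The only cosmetic difference is that the paper keeps the test spinor in the form $\nu\cdot\psi$ throughout, so the boundedness of Clifford multiplication by $\nu$ on $H_{\frac12}(\Sigma)$ that you flag is absorbed without comment; it is harmless since $\nu\cdot$ is a pointwise isometry and parallel in the normal direction.
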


\begin{proof} Let $\phi\in \Gamma_c^\infty (M, \SS_M)$ and $\psi\in  H_{\frac{1}{2}} (\Sigma, \SS_M|_\Sigma)$. Then by Theorem \ref{trace_theorem}, the spinor 
$\mathcal{E}\psi\in H_1(M, \SS_M)$. Thus, we can use Lemma \ref{extequ}, \eqref{equ_H1D_easydir}, and Theorem \ref{trace_theorem}
 to obtain
 \begin{align*}
 |( \phi|_\Sigma, \nu\cdot \psi)_\Sigma|=&|(D\phi, \mathcal{E} (\nu\cdot\psi))-(\phi, D\mathcal{E}(\nu\cdot\psi))|\\
 \leq &\Vert D\phi\Vert_{L^2}\Vert \mathcal{E}(\nu\cdot\psi)\Vert_{L^2}+ \Vert \phi\Vert_{L^2}\Vert D \mathcal{E}(\nu\cdot\psi)\Vert_{L^2}\\
 \leq &2\Vert \phi\Vert_{D}\Vert \mathcal{E}(\nu\cdot\psi)\Vert_{D} \leq C \Vert \phi\Vert_{D}\Vert \mathcal{E}(\nu\cdot\psi)\Vert_{H_1}
 \leq  C' \Vert \phi\Vert_{D}\Vert \nu\cdot\psi\Vert_{H_{\frac{1}{2}}(\Sigma)}.
\end{align*}
Together with Lemma \ref{pairing-Sobolev}, this implies
\[ \Vert \phi|_\Sigma\Vert_{H_{-\frac{1}{2}}(\Sigma)}\leq C' \Vert \phi\Vert_{D}.\]
Since $\Gamma_c^\infty (M, \SS_M)$ is dense in $\dom\, D_{\mathrm{max}}$ w.r.t. the graph norm, cf. Lemma \ref{dense_in_graph_norm}, the claim follows.
\end{proof}

\begin{remark}
 Note that $R$ is not surjective here. For closed boundaries the image was specified in 
\cite[Theorems 1.7 and 6.7.ii]{baer_ballmann_11}. For noncompact 
boundaries the image will be further considered in Lemma \ref{R_banach} and below.
\end{remark}

\begin{lem}\label{againextequ}  Equality \eqref{L2-structure_mod_boundary} holds for all 
$\phi\in \dom\, D_{\mathrm{max}}$ and $\psi\in H_1(M,\SS_M)$.
\end{lem}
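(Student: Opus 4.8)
\textbf{Proof plan for Lemma~\ref{againextequ}.}
The goal is to extend the integration-by-parts formula \eqref{L2-structure_mod_boundary}, namely
$(D\psi,\phi)-(\psi,D\phi)=-\int_\Sigma \langle \nu\cdot R\psi, R\phi\rangle\, ds$,
from the case $\phi,\psi\in H_1(M,\SS_M)$ (established in Lemma~\ref{extequ}) to the case where only $\psi\in H_1(M,\SS_M)$ but $\phi\in\dom\, D_{\mathrm{max}}$. The plan is to approximate $\phi$ in the graph norm by compactly supported smooth spinors, using Lemma~\ref{dense_in_graph_norm}, and to check that each of the three terms in the identity passes to the limit. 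The left-hand side is the easy part: if $\phi_i\to\phi$ in $\Vert.\Vert_D$ then $(D\psi,\phi_i)\to(D\psi,\phi)$ because $D\psi\in L^2$ and $\phi_i\to\phi$ in $L^2$, and $(\psi,D\phi_i)\to(\psi,D\phi)$ because $\psi\in L^2$ and $D\phi_i\to D\phi$ in $L^2$; both use only the $L^2$-part of the graph-norm convergence together with Cauchy--Schwarz.

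The boundary term is where the work lies, and it is handled exactly by the perfect pairing of Lemma~\ref{pairing-Sobolev} between $H_{1/2}(\Sigma,\SS_M|_\Sigma)$ and $H_{-1/2}(\Sigma,\SS_M|_\Sigma)$. Since $\psi\in H_1(M,\SS_M)$, the Trace Theorem~\ref{trace_theorem} gives $R\psi\in H_{1/2}(\Sigma,\SS_M|_\Sigma)$, and Clifford multiplication by the (bounded-geometry, hence uniformly bounded-derivative) unit normal $\nu$ preserves $H_{1/2}(\Sigma)$, so $\nu\cdot R\psi\in H_{1/2}(\Sigma)$. On the other hand, Theorem~\ref{extended-trace} gives a bounded extended trace $R\colon\dom\, D_{\mathrm{max}}\to H_{-1/2}(\Sigma,\SS_M|_\Sigma)$, so $R\phi_i\to R\phi$ in $H_{-1/2}(\Sigma)$. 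Therefore, writing the boundary integral as the $H_{1/2}\times H_{-1/2}$ pairing,
\[
\left|\int_\Sigma \langle \nu\cdot R\psi, R\phi_i\rangle\,ds-\int_\Sigma \langle \nu\cdot R\psi, R\phi\rangle\,ds\right|\le C\,\Vert \nu\cdot R\psi\Vert_{H_{1/2}(\Sigma)}\,\Vert R\phi_i-R\phi\Vert_{H_{-1/2}(\Sigma)}\to 0.
\]
(One should note that for $\phi_i\in\Gamma_c^\infty$ the ordinary $L^2$-boundary integral agrees with this pairing, which is immediate from Lemma~\ref{pairing-Sobolev}.)

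Putting the three limits together, both sides of \eqref{L2-structure_mod_boundary} for $\phi_i,\psi$ converge to the corresponding sides for $\phi,\psi$, which proves the identity in the stated generality. The only genuine subtlety — the ``main obstacle'' — is bookkeeping the correct Sobolev regularity on the boundary: the extended trace of a maximal-domain spinor lives only in $H_{-1/2}$, so one cannot pair it against itself and \emph{must} exploit that the other factor comes from an $H_1$-spinor and hence lands in $H_{1/2}$, which is precisely dual to $H_{-1/2}$. Everything else is a routine triangle-inequality argument. (It is worth remarking that one cannot symmetrically relax both $\phi$ and $\psi$ to $\dom\, D_{\mathrm{max}}$ by this method, since then both boundary traces would only lie in $H_{-1/2}$ and the pairing degenerates; the asymmetric hypothesis is essential.)
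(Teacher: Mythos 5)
Your proof is correct and follows essentially the same route as the paper: approximate $\phi$ in the graph norm by compactly supported smooth spinors (Lemma \ref{dense_in_graph_norm}) and control the boundary term via the $H_{\frac{1}{2}}\times H_{-\frac{1}{2}}$ pairing of Lemma \ref{pairing-Sobolev} together with both trace theorems (Theorem \ref{trace_theorem} and Theorem \ref{extended-trace}). The only harmless difference is that you keep $\psi$ fixed in $H_1$ and use Lemma \ref{extequ} as the base case, whereas the paper also approximates $\psi$ by smooth compactly supported spinors; the estimates are otherwise identical.
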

\begin{proof}
The proof is done as the one of Lemma \ref{extequ} starting with $\psi_j, \phi_i\in \Gamma_c^\infty(M, \SS_M)$ where $\psi_j\to \psi$ in $H_1$ and $\phi_i\to \phi$ in the graph norm of $D$ and using the (extended) Trace Theorem \ref{extended-trace}. The only difference is seen in the estimate of the boundary integrals which now read e.g.
\begin{align*} \left| \int_{\Sigma} \langle \nu\cdot R\psi_j, R\phi_i - R\phi\rangle ds\right|&\leq \Vert R\psi_j\Vert_{H_{\frac{1}{2}}(\Sigma)} \Vert R(\phi_i-\phi)\Vert_{H_{-\frac{1}{2}}(\Sigma)}
\leq C\Vert \psi_j\Vert_{H_1} \Vert \phi_i-\phi\Vert_{D}\to 0
\end{align*}
where the last inequality uses both versions of the Trace Theorem \ref{trace_theorem} and \ref{extended-trace}.
\end{proof}
The next Lemma gives a full description of $\dom\, D_{\mathrm{min}}$:
\begin{lem}\label{norm_equ_0}  The $H_1$-norm and the graph norm $\Vert . \Vert_D$ are equivalent on
 $$\{\phi\in \dom\, D_\mathrm{max}\ |\ R\phi=0\}.$$ In particular, 
\begin{eqnarray*}
\dom\, D_{\mathrm{min}}=\overline{\Gamma_{cc}^\infty(M, \SS_M)}^{\Vert.\Vert_D}=\overline{\Gamma_{cc}^\infty(M, \SS_M)}^{\Vert.\Vert_{H_1}}&=&\{\phi\in \dom\, D_{\mathrm{max}}\ |\ R\phi=0\}\\ &=&\{\phi\in H_1(M, \SS_M)\ |\ R\phi=0\}.
\end{eqnarray*}
\end{lem}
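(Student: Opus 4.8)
\textbf{Proof plan for Lemma \ref{norm_equ_0}.}

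The plan is to prove the chain of equalities by establishing the norm equivalence first and then reading off the identifications. The key input is the integration-by-parts formula \eqref{L2-structure_mod_boundary}, now available on all of $\dom\, D_{\mathrm{max}}\times H_1$ by Lemma \ref{againextequ}, together with the description \eqref{Lich} of the $H_1$-norm via the Lichnerowicz formula. First I would take $\phi\in \dom\, D_{\mathrm{max}}$ with $R\phi=0$ and a sequence $\phi_i\in \Gamma_c^\infty(M,\SS_M)$ with $\phi_i\to\phi$ in the graph norm (Lemma \ref{dense_in_graph_norm}). On each $\phi_i$ formula \eqref{Lich} holds; the boundary term $\int_\Sigma\langle R\phi_i, D^W(R\phi_i)\rangle ds$ need not vanish, but by the Trace Theorem \ref{trace_theorem} $R\phi_i\to R\phi=0$ in $H_{1/2}(\Sigma)$, hence $D^W R\phi_i\to 0$ in $H_{-1/2}(\Sigma)$ and the pairing of Lemma \ref{pairing-Sobolev} shows that this boundary term tends to $0$. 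Since the curvature terms $\frac14\s^M$ and $\frac{\i}2\Omega\cdot$ are bounded operators, \eqref{Lich} then gives, in the limit, $\Vert\phi\Vert_{H_1}^2\leq \Vert\phi\Vert_{L^2}^2+\Vert D\phi\Vert_{L^2}^2+C\Vert\phi\Vert_{L^2}^2\leq C'\Vert\phi\Vert_D^2$ (one must first note that $\phi_i$ is then Cauchy in $H_1$, so $\phi\in H_1$, which is why \eqref{Lich} applies to $\phi$ in the limit). The reverse estimate $\Vert\phi\Vert_D\leq\sqrt{n+2}\,\Vert\phi\Vert_{H_1}$ is immediate from \eqref{equ_H1D_easydir}.

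Next I would deduce the set identities. The inclusion $\overline{\Gamma_{cc}^\infty}^{\Vert.\Vert_D}\subset\{\phi\in\dom\,D_{\mathrm{max}}\mid R\phi=0\}$ follows because $R=0$ on $\Gamma_{cc}^\infty$ and $R$ is continuous on $\dom\,D_{\mathrm{max}}$ by Theorem \ref{extended-trace}; the first equality $\dom\,D_{\mathrm{min}}=\overline{\Gamma_{cc}^\infty}^{\Vert.\Vert_D}$ is just the definition of $D_{\mathrm{min}}$. For the opposite direction, given $\phi\in\dom\,D_{\mathrm{max}}$ with $R\phi=0$, the norm equivalence just proved puts $\phi\in H_1(M,\SS_M)$; so it suffices to show $\{\phi\in H_1\mid R\phi=0\}=\overline{\Gamma_{cc}^\infty}^{\Vert.\Vert_{H_1}}$, and that the $H_1$-closure and the $\Vert.\Vert_D$-closure of $\Gamma_{cc}^\infty$ coincide (the latter is clear from the norm equivalence on this subspace, once we know the subspace contains both closures). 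Thus the whole chain collapses once I show: any $\phi\in H_1(M,\SS_M)$ with vanishing trace can be $H_1$-approximated by spinors in $\Gamma_{cc}^\infty(M,\SS_M)$.

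The main obstacle is exactly that last approximation step — producing interior-supported smooth approximants of an $H_1$-spinor with zero boundary trace. The cutting-off-at-infinity part is routine (multiply by the exhaustion cut-offs $\eta_i$ as in Lemma \ref{dense_in_graph_norm}, using $\max|d\eta_i|\to 0$), so the real issue is purely local near $\Sigma$: one must push the support away from the boundary. Here I would work in the Fermi coordinates $(U_\gamma,\kappa_\gamma,\xi_\gamma,h_\gamma)$ of Definition \ref{def_synch}. Using the partition of unity, it is enough to approximate $h_\gamma\phi$, which in the collar chart $[0,2r)\times B_{2r}(0)$ is an $H_1$-function of a half-space variable with zero trace on $\{t=0\}$; the classical fact that $H^1_0(\R^{n}_+)$ (functions with vanishing trace) equals the closure of $C_c^\infty(\mathring{\R^n_+})$ gives smooth approximants supported in $t>0$. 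Patching these back together with the locally finite cover (and controlling the overlap constants via the bounded-geometry bounds and Lemma \ref{part_un}, exactly as in the proof of Proposition \ref{workaround_image} and Lemma \ref{lem_equiv_cutoff}) yields a sequence in $\Gamma_{cc}^\infty(M,\SS_M)$ converging to $\phi$ in $H_1$. One has to be mildly careful that the synchronous trivialization identifies the bundle Sobolev norm with the scalar one up to uniform constants, which is guaranteed by Definition \ref{-12-Sobolev} and the bounded geometry of $\SS_M$.
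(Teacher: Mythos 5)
There is a genuine gap in your first step, and it is exactly the point the paper's proof is designed to avoid. You take $\phi\in\dom D_{\mathrm{max}}$ with $R\phi=0$, approximate by $\phi_i\in\Gamma_c^\infty(M,\SS_M)$ in the \emph{graph norm} (Lemma \ref{dense_in_graph_norm}), and then claim that $R\phi_i\to R\phi=0$ in $H_{1/2}(\Sigma)$ "by the Trace Theorem \ref{trace_theorem}". But Theorem \ref{trace_theorem} bounds the trace by the $H_1$-norm, which is precisely what you do not yet control; graph-norm convergence only gives $R\phi_i\to 0$ in $H_{-1/2}(\Sigma)$ via Theorem \ref{extended-trace}. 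There is no estimate of the form $\Vert R\psi\Vert_{H_{1/2}(\Sigma)}\leq C\Vert\psi\Vert_D$ — indeed $R(\dom D_{\mathrm{max}})$ is not contained in $H_{1/2}(\Sigma)$ — so nothing forces the boundary term $\int_\Sigma\langle R\phi_i, D^W R\phi_i\rangle\,ds$ (or the corresponding term for differences $\phi_i-\phi_j$) to tend to zero, and the parenthetical claim that $\phi_i$ is Cauchy in $H_1$ is unjustified for the same reason. Since your whole chain of identities hinges on this step to conclude $\{\phi\in\dom D_{\mathrm{max}}\,|\,R\phi=0\}\subset H_1(M,\SS_M)$, the inclusion of the vanishing-trace maximal domain into $\dom D_{\mathrm{min}}$ is not established.

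The paper circumvents this by never passing the Lichnerowicz identity to a limit along a merely graph-norm convergent sequence: it proves the norm equivalence only on $\{\psi\in\Gamma_c^\infty(M,\SS_M)\,|\,R\psi=0\}$, where the boundary term in \eqref{Lich} is identically zero, and then obtains the crucial inclusion $\{\phi\in\dom D_{\mathrm{max}}\,|\,R\phi=0\}\subset\dom D_{\mathrm{min}}$ by a functional-analytic duality argument: the operator $D$ with the vanishing-trace domain is a closed extension of $D_{cc}$ (by Theorem \ref{extended-trace}), and using Lemma \ref{dense_in_graph_norm} together with Lemma \ref{againextequ} one shows its adjoint is $D_{\mathrm{max}}$, whence it equals $D_{\mathrm{min}}$. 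If you want to salvage your plan, you should replace your limiting Lichnerowicz argument by this adjoint computation (or find some other way to show a vanishing-trace element of $\dom D_{\mathrm{max}}$ lies in $H_1$). Your second step — identifying $\{\phi\in H_1\,|\,R\phi=0\}$ with $\overline{\Gamma_{cc}^\infty}^{\Vert.\Vert_{H_1}}$ by localization in Fermi coordinates and the half-space fact that vanishing-trace $H^1$ functions are approximable by interior-supported smooth functions — is a reasonable alternative to part of the paper's argument (the paper gets this identity for free from the duality step), but on its own it only treats $H_1$-spinors and does not close the gap concerning general elements of $\dom D_{\mathrm{max}}$ with vanishing trace.
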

\begin{proof}Firstly we show the equivalence on $\{\psi\in \Gamma_{c}^\infty(M, \SS_M)\ |\ R\psi=0\}$: Let $\phi$ be in this set. Then by \eqref{Lich} we have
 \[ \Vert \phi\Vert_{H_1}^2=\Vert \phi\Vert_{L^2}^2+\Vert D\phi\Vert_{L^2}^2 -\int_{M} \frac{\s^M}{4}|\phi|^2 dv - \int_{M} \frac {\i}{2}
\<\Omega\cdot\phi, \phi \> dv\leq C \Vert \phi\Vert_D^2, \]
where we used that $M$ and $L$ are of bounded geometry and, hence, $|\s^M|$ and $|\Omega|$ are uniformly bounded on all of $M$. The reverse inequality was seen in \eqref{equ_H1D_easydir}. 

From the definition of $\dom\, D_{\mathrm{min}}$ and the equivalence of the norms from above, we already have  $\dom\, D_{\mathrm{min}}=\overline{\Gamma_{cc}^\infty}^{\Vert.\Vert_D}=\overline{\Gamma_{cc}^\infty}^{\Vert.\Vert_{H_1}}$. From the Trace Theorem \ref{extended-trace}, we get 
$$\overline{\Gamma_{cc}^\infty}^{\Vert.\Vert_D}\subset \{\phi\in \dom\, D_{\mathrm{max}}\ |\ R\phi=0\}.$$ Next we want to show that $D\colon \{\phi\in \dom\, D_{\mathrm{max}}\ |\ R\phi=0\} \to L^2(M, \SS_M)$ already 
equals $D_{\mathrm{min}}$. First we note that by the Trace Theorem \ref{extended-trace}, $D$ is a closed extension of $D_{cc}$. 
Hence, it suffices to show that $D^*=D_{\mathrm{max}}$. By definition, we have 
\[\dom\, D^*=\{ \theta \in L^2(M,\SS_M)\ |\ \exists \chi\in L^2(M, \SS_M)\, \forall \psi\in \dom\, D_{\mathrm{max}}, R\psi=0: (\theta, D\psi)=(\chi,\psi)\}.\]
Let $\theta \in \dom\, D_{\mathrm{max}}$. By Lemma \ref{dense_in_graph_norm},  there exists 
a sequence $\theta_i\in \Gamma_c^\infty(M,\SS_M)$ with $\theta_i\to \theta$ in the graph norm. Hence, for all 
$\psi\in \dom\, D_{\mathrm{max}}$ with $R\psi=0$ we have $(\theta, D\psi)=\lim_{i\to \infty}(\theta_i,D\psi)$. Then by Lemma 
\ref{againextequ} and $R\psi=0$, 
we obtain $$(\theta, D\psi)=\lim_{i\to \infty}(D\theta_i,\psi)=(D\theta,\psi)$$ 
which implies that $\theta\in \dom\, D^*$. Thus, $D^*=D_{\mathrm{max}}$ and $D=D_{\mathrm{min}}$. Together with
 \[\dom\, D_{\mathrm{min}}=\overline{\Gamma_{cc}^\infty}^{\Vert.\Vert_{H_1}}\subset  \{\phi\in H_1(M,\SS_M)\ |\ R\phi=0\}\subset \{\phi\in \dom\, D_{\mathrm{max}}\ |\ R\phi=0\}=\dom\, D_{\mathrm{min}},\] the rest of the Lemma follows. 
\end{proof}

Now we can describe $H_1$ in terms of its image under the trace map.

\begin{lem}\label{H1_dommax}
We have $H_1(M, \SS_M)=\{\phi\in \dom\, D_{\mathrm{max}}\ |\ R\phi\in H_{\frac{1}{2}}(\Sigma, \SS_M|_\Sigma)\}$.
\end{lem}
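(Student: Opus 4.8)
The plan is to prove the two inclusions separately. The inclusion $H_1(M,\SS_M)\subset\{\phi\in\dom D_{\mathrm{max}}\mid R\phi\in H_{\frac12}(\Sigma,\SS_M|_\Sigma)\}$ is essentially immediate from what has already been assembled: $H_1(M,\SS_M)\subset\dom D_{\mathrm{max}}$ was noted right after the definition of $D_{\mathrm{max}}$, and the Trace Theorem \ref{trace_theorem} says precisely that $R$ maps $H_1(M,\SS_M)$ boundedly into $H_{\frac12}(\Sigma,\SS_M|_\Sigma)$. So the content of the lemma is the reverse inclusion: if $\phi\in\dom D_{\mathrm{max}}$ and $R\phi\in H_{\frac12}(\Sigma,\SS_M|_\Sigma)$, then $\phi\in H_1(M,\SS_M)$.

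For that direction the natural idea is to subtract off a "good" extension of the boundary data. Given such a $\phi$, set $\psi:=\phi-\mathcal{E}R\phi$, where $\mathcal{E}\colon H_{\frac12}(\Sigma,\SS_M|_\Sigma)\to H_1(M,\SS_M)$ is the bounded extension operator from Theorem \ref{trace_theorem}. Since $R\phi\in H_{\frac12}(\Sigma)$ by hypothesis, $\mathcal{E}R\phi\in H_1(M,\SS_M)\subset\dom D_{\mathrm{max}}$, hence $\psi\in\dom D_{\mathrm{max}}$. Moreover $R$ is a right-inverse-compatible extension, so $R\psi=R\phi-R\mathcal{E}R\phi=R\phi-R\phi=0$ — here one uses that the extended trace map $R$ on $\dom D_{\mathrm{max}}$ (Theorem \ref{extended-trace}) restricts on $H_1$ to the Sobolev trace, and that $R\mathcal{E}=\Id$ by Theorem \ref{trace_theorem}. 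Now Lemma \ref{norm_equ_0} applies to $\psi$: it says that $\{\phi\in\dom D_{\mathrm{max}}\mid R\phi=0\}=\{\phi\in H_1(M,\SS_M)\mid R\phi=0\}$, so $\psi\in H_1(M,\SS_M)$. Therefore $\phi=\psi+\mathcal{E}R\phi\in H_1(M,\SS_M)$, which is the desired inclusion.

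Combining the two inclusions gives the claimed equality. The only point that requires a little care — and hence the main thing to get right rather than a genuine obstacle — is the bookkeeping on which trace map is meant where: one must check that the extended trace $R\colon\dom D_{\mathrm{max}}\to H_{-\frac12}(\Sigma)$ of Theorem \ref{extended-trace} agrees, on the subspace $H_1(M,\SS_M)$, with the Sobolev trace of Theorem \ref{trace_theorem}, so that the identities $R\mathcal{E}=\Id$ and "$R\phi\in H_{\frac12}$" are being used consistently. This compatibility holds because both maps are the unique continuous extension of the naive restriction $\phi\mapsto\phi|_\Sigma$ on $\Gamma_c^\infty(M,\SS_M)$, and the $H_1$-topology is finer than the graph-norm topology by \eqref{equ_H1D_easydir}, so a sequence in $\Gamma_c^\infty$ converging in $H_1$ also converges in the graph norm and the two limiting traces coincide in $H_{-\frac12}(\Sigma)$.
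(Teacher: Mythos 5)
Your proposal is correct and follows essentially the same route as the paper: subtract the $H_1$-extension $\mathcal{E}R\phi$ of the boundary data, note the difference lies in $\dom D_{\mathrm{max}}$ with vanishing trace, and invoke Lemma \ref{norm_equ_0} to place it in $H_1(M,\SS_M)$. Your extra remark on the compatibility of the extended trace of Theorem \ref{extended-trace} with the Sobolev trace on $H_1$ is a correct point that the paper uses implicitly.
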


\begin{proof}
The inclusion '$\subset$' is clear from the Trace Theorem \ref{trace_theorem}. It remains to prove '$\supset$': 
Let $\phi\in \dom\, D_{\mathrm{max}}$ with $R\phi\in H_\frac{1}{2} (\Sigma, \SS_M|_\Sigma)$. Then Theorem \ref{trace_theorem} 
   implies that 
 $\psi:=\mathcal{E}R\phi\in H_1 (M, \SS_M)$. Thus, $\phi-\psi\in \dom\, D_{\mathrm{max}}$ and $R(\phi-\psi)=0$. But
 due to Lemma \ref{norm_equ_0}, $\phi-\psi\in H_1(M, \SS_M)$ and, hence, $\phi\in H_1(M, \SS_M)$.
\end{proof}

In Proposition \ref{workaround_image} we have shown that there is a linear map $\tilde{\mathcal E}$ such that $\tilde{\mathcal{E}}R: \Gamma_c^\infty(M,\SS_M)\to \Gamma_c^\infty(M,\SS_M)$ fulfills for all $\phi\in \Gamma_c^\infty(M, \SS_M)$
\begin{align} \Vert \tilde{\mathcal{E}} R\phi\Vert_D^2\leq C\Vert \phi\Vert_D^2.\label{super_ext}\end{align}
Thus, $\tilde{\mathcal{E}}R$ extends uniquely to a bounded linear map 
\begin{align} 
 \tilde{\mathcal{E}}R: \dom\, D_{\rm max} \to \dom\, D_{\rm max} \label{ER_ext}.
\end{align}

Note that $\tilde{\mathcal{E}}|_{H_\frac{1}{2}}$ is an extension map in the sense of Theorem~\ref{trace_theorem} as can be seen in the following: 
 Let $\psi \in H_\frac{1}{2} (\Sigma, \SS_M|_\Sigma)$. By Lemma \ref{H1_dommax} there is a $\phi \in H_1(M, \SS_M)$ with $R\phi=\psi$. Thus, by Lemma \ref{norm_equ_0} $\tilde{\mathcal{E}}\psi-\phi\in \dom\, D_{\rm min}\subset H_1(M, \SS_M)$. In particular,   $\tilde{\mathcal{E}}|_{H_{\frac{1}{2}}}: H_\frac{1}{2} (\Sigma, \SS_M|_\Sigma)\to H_1(M, \SS_M)$.

From now we choose any extension map $\mathcal{E}$ fulfilling \eqref{super_ext}. Obviously, all those maps  lead to equivalent norms $\Vert \mathcal{E}R.\Vert_D$.

\begin{conjecture}
 Every extension map in the sense of Theorem \ref{trace_theorem} fulfills \eqref{super_ext} with an appropriate constant $C$.
\end{conjecture}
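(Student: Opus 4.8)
The plan is to localize along the lines of the proof of Proposition~\ref{workaround_image} and then reduce everything to one explicit Fourier estimate on the half-space. Let $\mathcal{E}$ be an extension map as produced by Theorem~\ref{trace_theorem}, i.e.\ patched together from a fixed Euclidean extension map $\mathcal{E}_{\mathrm{euc}}\colon H_{1/2}(\R^{n})\to H_1(\R^{n+1}_+)$ over the Fermi charts $(U_\gamma,\kappa_\gamma,\xi_\gamma,h_\gamma)_\gamma$ and their subordinate partition of unity. Running the cut-off argument of Proposition~\ref{workaround_image} — with $\mathcal{E}_{\mathrm{euc}}$ in the role of $\mathcal{E}_{\rm BB}$, Lemma~\ref{lem_equiv_cutoff}, Lemma~\ref{part_un} and the uniform local finiteness of the cover as before — it is enough to prove the following local statement: if $D_0$ denotes the constant-coefficient operator obtained from $D$ by freezing coefficients in one Fermi chart, then
\[ \Vert \mathcal{E}_{\mathrm{euc}}\,Rf\Vert_{D_0}\ \le\ C\,\Vert f\Vert_{D_0}\qquad\text{for all }f\in\Gamma_c^\infty(\R^{n+1}_+),\]
with $C$ depending only on the global curvature, mean-curvature and injectivity-radius bounds; the perturbation from $D_0$ to the genuine $D$ over a chart is then controlled exactly as in Lemma~\ref{local_inv}.

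For the flat model write $D_0=e_0\cdot(\partial_{x_0}+D^{\partial})$ on $\R^{n+1}_+$, where $e_0$ is Clifford multiplication by the unit normal and $D^{\partial}$ is a constant-coefficient Dirac operator on $\R^{n}=\partial\R^{n+1}_+$; this is the flat model of \eqref{diracgauss}. After a tangential Fourier transform, $D^{\partial}(\xi)^2=|\xi|^2$, so $\mathbb{C}^r$ splits fibrewise as $V_+(\xi)\oplus V_-(\xi)$ into the $\pm|\xi|$-eigenspaces of $D^{\partial}(\xi)$; this is the flat model of the projections $\pi_{>},\pi_{\leq}$ of $\widetilde{D}^{\Sigma}$, and it induces a flat model $\Vert.\Vert_{\check{H}_0}$ of the norm in \eqref{H_spaces}. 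The local statement then follows from two estimates. First, a trace inequality adapted to $D_0$: $\Vert f\Vert_{D_0}\gtrsim\Vert Rf\Vert_{\check{H}_0}$ for $f\in\Gamma_c^\infty(\R^{n+1}_+)$, proved by solving the fibrewise ODE $\partial_{x_0}\hat f=-D^{\partial}(\xi)\hat f+\dots$ — only the $V_+(\xi)$-component of the trace has an $L^2$ $D_0$-harmonic extension into $x_0>0$, so the $V_-(\xi)$-component costs a full extra half-derivative. Second, $\Vert\mathcal{E}_{\mathrm{euc}}g\Vert_{D_0}\lesssim\Vert g\Vert_{\check{H}_0}$ for the explicit map, say $\widehat{\mathcal{E}_{\mathrm{euc}}g}(x_0,\xi)=e^{-x_0\langle\xi\rangle}\hat g(\xi)$ with $\langle\xi\rangle=(1+|\xi|^2)^{1/2}$ (or the classical Stein extension): here $\int_0^\infty e^{-2x_0\langle\xi\rangle}dx_0=\tfrac1{2\langle\xi\rangle}$ gives $\Vert\mathcal{E}_{\mathrm{euc}}g\Vert_{L^2}^2\sim\Vert g\Vert_{H_{-1/2}}^2$, while the algebraic identity $|(D^{\partial}(\xi)-\langle\xi\rangle)\hat g|^2=(\langle\xi\rangle-|\xi|)^2|\widehat{P_+g}|^2+(\langle\xi\rangle+|\xi|)^2|\widehat{P_-g}|^2$ (with $P_\pm$ the projection onto $V_\pm(\xi)$) gives $\Vert D_0\mathcal{E}_{\mathrm{euc}}g\Vert_{L^2}^2\sim\Vert P_+g\Vert_{H_{-3/2}}^2+\Vert P_-g\Vert_{H_{1/2}}^2$, because $\partial_{x_0}$ pulls down a $\langle\xi\rangle$ but the $x_0$-integration returns $\langle\xi\rangle^{-1}$ and on $V_+$ one has the further gain $\langle\xi\rangle-|\xi|\sim\langle\xi\rangle^{-1}$. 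Chaining the two estimates yields the local bound, and the patching finishes the proof.

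The step I expect to be the real obstacle is the $D$-adapted trace inequality in the \emph{curved} setting: it couples the global spectral decomposition of $\widetilde{D}^{\Sigma}$ on $\Sigma$ with the chart-by-chart flat splitting $V_\pm(\xi)$, and matching the two up to errors controlled by the bounded-geometry constants amounts to a uniform parametrix construction for $\widetilde{D}^{\Sigma}$ — the analytic input that Bär and Ballmann absorb into the constant in \eqref{BB_Thm} through the spectral gap. I would also point out a structural caveat: the computation uses that $\mathcal{E}_{\mathrm{euc}}$ is a Fourier multiplier concentrating at normal scale $\langle\xi\rangle^{-1}$, not merely that it is bounded $H_{1/2}\to H_1$. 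Indeed, \eqref{super_ext} fails for a generic bounded right inverse of $R$: if $T\colon H_{1/2}(\Sigma)\to H_1(M)$ is bounded with $RT=0$ — for instance $Tg=x_0\chi(x_0)\bigl((1-\Delta_\Sigma)^{-1/4}g\bigr)$ in a Fermi chart with $\chi$ a fixed cutoff — then $\mathcal{E}+T$ is again an extension map in the Sobolev sense, yet $\Vert(\mathcal{E}+T)Rf\Vert_D/\Vert f\Vert_D$ is unbounded over $\Gamma_c^\infty(M)$, as one sees by letting $f$ run over near-minimal $\Vert.\Vert_D$-extensions of high-frequency boundary data lying in the $V_+$-subspace. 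So the conjecture should be read as a statement about the maps actually constructed in Theorem~\ref{trace_theorem} (or about maps obeying a comparable structural hypothesis); with that reading, the route sketched here is the natural one.
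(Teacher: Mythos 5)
First, a point of orientation: the statement you were asked to prove is left \emph{open} in the paper --- it is stated as a conjecture, and Proposition \ref{workaround_image} establishes \eqref{super_ext} only for the particular map $\tilde{\mathcal{E}}$ assembled from the B\"ar--Ballmann extensions $\mathcal{E}_{\rm BB}$ on the auxiliary closed boundaries $\hat{U}'_\gamma$ --- so there is no proof in the paper to compare against. Your proposal does not close the conjecture either, and it is internally split. The first half only treats extension maps of a very particular structure (a Fourier multiplier such as $e^{-x_0\langle\xi\rangle}$ patched over Fermi charts), which after your patching argument is essentially a variant of what Proposition \ref{workaround_image} already provides; and even there the decisive step --- the uniform, curved trace inequality matching the global spectral projections $\pi_{>},\pi_{\leq}$ of $\widetilde{D}^{\Sigma}$ with the chart-wise flat splittings $V_\pm(\xi)$ --- is flagged by you as an obstacle rather than carried out (this is closely related to the paper's second conjecture comparing $\Vert.\Vert_{\check{R}}$ with $\Vert.\Vert_{\check{H}}$). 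Moreover, the localization step is simply not available for the maps the conjecture actually quantifies over: an abstract bounded right inverse of $R$ in the sense of Theorem \ref{trace_theorem} need not be local or pseudolocal, so it does not decompose as $\sum_\gamma h_\gamma \mathcal{E}_{\rm euc}(h_\gamma|_\Sigma\,\cdot)$, and the cut-off argument of Proposition \ref{workaround_image} cannot be run for it. Your closing remark concedes this, but then what you sketch is a proof of a different (restricted) statement, not of the conjecture.

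The genuinely valuable part of your note is the final observation, which is an argument \emph{against} the conjecture as literally stated: extension maps form an affine family $\mathcal{E}+T$ with $T\colon H_{\frac{1}{2}}(\Sigma,\SS_M|_\Sigma)\to H_1(M,\SS_M)$ bounded and $RT=0$, and \eqref{super_ext} is not stable under such perturbations, because $\Vert\phi\Vert_D$ controls $R\phi$ only in the mixed norm of Remark \ref{rem_norms} (the positive spectral part merely in $H_{-\frac{1}{2}}$), while $\Vert TR\phi\Vert_D$ can grow like the full $H_{\frac{1}{2}}$-norm of $R\phi$. To turn this into a theorem you still owe two things. (a) Check that your $T$ qualifies: with the smoothing operator (better written as $(1+(\widetilde{D}^{\Sigma})^2)^{-1/4}$ to stay in the paper's framework) it does not preserve $\Gamma_c^\infty$ on a noncompact $\Sigma$, whereas Theorem \ref{trace_theorem} records exactly this property of its extension map; you must either argue that this is not part of ``extension map in the sense of Theorem \ref{trace_theorem}'' or replace the smoothing by a properly supported operator of order $-\frac{1}{2}$. (b) Actually produce the test spinors: take boundary data $g_\lambda$ in the range of $\pi_{[\lambda,2\lambda]}$ normalized in $H_{-\frac{1}{2}}$, show that some extension $\phi_\lambda$ (e.g. $\tilde{\mathcal{E}}g_\lambda$, then approximated in $H_1$ by elements of $\Gamma_c^\infty(M,\SS_M)$) has $\Vert\phi_\lambda\Vert_D\leq C$ uniformly, and bound $\Vert TR\phi_\lambda\Vert_D$ from below by a constant times $\lambda^{\frac{1}{2}}\Vert g_\lambda\Vert_{L^2(\Sigma)}\to\infty$. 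The uniform bound on $\Vert\phi_\lambda\Vert_D$ is not free: via Remark \ref{rem_norms} it requires comparing the global spectral normalization of $g_\lambda$ with the localized $\check{H}_\gamma$-norm, which is again the unresolved comparison above, so either that comparison or a direct collar construction must be supplied. As it stands, therefore, the proposal neither proves the conjecture nor refutes it; its real content is the (plausible and worthwhile) suggestion that the conjecture needs a structural hypothesis on $\mathcal{E}$ to have a chance of being true.
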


On $R(\dom\, D_{\mathrm{max}})$,  we set $$\Vert \psi\Vert_{\check{R}} := \Vert \mathcal{E}R\phi\Vert_D $$ 
where $R\phi=\psi$. By Theorem \ref{workaround_image} and \eqref{ER_ext}, this is  well defined. 

\begin{lemma}\label{R_banach} The space $\check{R}:=(R(\dom\, D_{\mathrm{max}}), \Vert. \Vert_{\check{R}})$ is a Hilbert space with $\check{R}= \overline{\Gamma_c^\infty(\Sigma, \SS_M|_\Sigma)}^{\Vert. \Vert_{\check{R}}}$.   
\end{lemma}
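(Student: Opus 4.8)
The plan is to establish two things: that $\check R$ is complete (hence a Hilbert space, since $\Vert\cdot\Vert_{\check R}$ comes from an inner product pulled back along $\mathcal{E}R$), and that $\Gamma_c^\infty(\Sigma,\SS_M|_\Sigma)$ is dense in it. The conceptual heart is to identify $\check R$ isometrically with a concrete closed subspace of the Hilbert space $(\dom\, D_{\mathrm{max}},\Vert\cdot\Vert_D)$. First I would observe that $\mathcal{E}R\colon\dom\, D_{\mathrm{max}}\to\dom\, D_{\mathrm{max}}$ from \eqref{ER_ext} is a bounded idempotent: indeed on $\Gamma_c^\infty(M,\SS_M)$ we have $R\mathcal{E}=\Id$ on the boundary (this is built into the construction of $\tilde{\mathcal E}$, as noted before Proposition~\ref{workaround_image}), so $(\mathcal{E}R)(\mathcal{E}R)=\mathcal{E}(R\mathcal{E})R=\mathcal{E}R$ there, and by density (Lemma~\ref{dense_in_graph_norm}) and boundedness the identity $(\mathcal{E}R)^2=\mathcal{E}R$ persists on all of $\dom\, D_{\mathrm{max}}$. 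Hence $P:=\mathcal{E}R$ is a bounded projection on the Hilbert space $(\dom\, D_{\mathrm{max}},\Vert\cdot\Vert_D)$, and its range $V:=\ran P=\mathcal{E}R(\dom\, D_{\mathrm{max}})$ is a closed subspace, therefore itself a Hilbert space with the $\Vert\cdot\Vert_D$-norm.

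Next I would show that $R\colon V\to\check R$ is an isometric isomorphism. By definition of $\Vert\cdot\Vert_{\check R}$, for $\psi=R\phi\in R(\dom\, D_{\mathrm{max}})$ we have $\Vert\psi\Vert_{\check R}=\Vert\mathcal{E}R\phi\Vert_D=\Vert P\phi\Vert_D$. So $R$ maps $V$ onto $\check R$ and for $v\in V$ we get $\Vert Rv\Vert_{\check R}=\Vert Pv\Vert_D=\Vert v\Vert_D$ since $Pv=v$ on $V$; this is the isometry. Injectivity of $R|_V$ is immediate from the isometry. Surjectivity holds because any $\psi\in R(\dom\, D_{\mathrm{max}})$ equals $R\phi$ for some $\phi$, and then $\psi = R(P\phi)$: indeed $R(P\phi)=R\mathcal{E}R\phi = R\phi = \psi$ using $R\mathcal{E}=\Id$ on the relevant boundary space (which follows by density and continuity from the compactly supported case). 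Thus $\check R\cong V$ isometrically, and since $V$ is a closed subspace of a Hilbert space, $\check R$ is a Hilbert space. (One should double-check that $\Vert\cdot\Vert_{\check R}$ is genuinely a norm and not just a seminorm, i.e.\ that $P\phi=0$ forces $R\phi=0$; this follows since $R\phi = R P\phi = 0$, again via $R\mathcal{E}=\Id$.)

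For the density statement, I would argue: given $\psi\in\check R$, pick $\phi\in\dom\, D_{\mathrm{max}}$ with $R\phi=\psi$. By Lemma~\ref{dense_in_graph_norm} there is a sequence $\phi_i\in\Gamma_c^\infty(M,\SS_M)$ with $\phi_i\to\phi$ in $\Vert\cdot\Vert_D$. Then $R\phi_i\in\Gamma_c^\infty(\Sigma,\SS_M|_\Sigma)$ (the restriction of a compactly supported smooth spinor is compactly supported and smooth), and $\Vert R\phi_i - \psi\Vert_{\check R}=\Vert\mathcal{E}R(\phi_i-\phi)\Vert_D\le C\Vert\phi_i-\phi\Vert_D\to 0$ by \eqref{super_ext}. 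Hence the smooth compactly supported boundary spinors are $\Vert\cdot\Vert_{\check R}$-dense in $\check R$, which is the asserted equality.

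The main obstacle is purely bookkeeping: making sure that the relations $R\mathcal{E}=\Id$ and $P^2=P$, which are transparent on $\Gamma_c^\infty$, propagate correctly to the completed spaces, keeping track of which extension map $\mathcal{E}$ is used (any one satisfying \eqref{super_ext}, as fixed just before the lemma) and verifying that $R$ is defined and bounded on the appropriate spaces via Theorem~\ref{extended-trace}. Once the idempotency of $P=\mathcal{E}R$ on $\dom\, D_{\mathrm{max}}$ is in hand, the rest is a formal application of the fact that the range of a bounded projection on a Hilbert space is a closed Hilbert subspace.
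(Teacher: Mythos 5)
Your proposal is correct and is essentially the paper's own argument: the paper proves completeness by the same mechanism (the bound of Proposition \ref{workaround_image} and \eqref{ER_ext} together with the identity $R\mathcal{E}=\Id$, propagated by density via Lemma \ref{dense_in_graph_norm}), only written as a direct Cauchy-sequence chase instead of your packaging of $\mathcal{E}R$ as a bounded idempotent whose closed range is isometrically identified with $\check{R}$ via $R$. The density statement is proved exactly as you do, using Lemma \ref{dense_in_graph_norm} and \eqref{super_ext}.
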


\begin{proof} From the definition of $\Vert. \Vert_{\check{R}}$, the linearity of the maps $\mathcal{E}$ and $R$, and the fact that $(\dom\, D_{\mathrm{max}}, \Vert. \Vert_D)$ is a Hilbert space, we get immediately that  
$\Vert .\Vert_{\check{R}}$ is a norm on $R(\dom\, D_{\mathrm{max}})$. Moreover, $\Vert.\Vert_{\check{R}}$ comes from a scalar product $(\phi,\psi)_{\check{R}}:=(\mathcal{E}\phi,\mathcal{E}\psi)_D\colon=(\mathcal{E}\phi,\mathcal{E}\psi) + (D\mathcal{E}\phi,D\mathcal{E}\psi)$. In order to show that $\check{R}$ is a Hilbert space it remains to show completeness: For that we consider a Cauchy 
sequence $\psi_i$ in $\check{R}$. Then, there is a sequence $\phi_i\in \dom\, D_\mathrm{max}$ with
 $R\phi_i=\psi_i$. With the definition of the $\check{R}$-norm,
 we get that $\mathcal{E}R\phi_i$ is a Cauchy sequence in  $(\dom\, D_{\mathrm{max}}, \Vert.\Vert_D)$ and, 
hence, there is a $\phi\in \dom\, D_{\mathrm{max}}$ with $\mathcal{E}R\phi_i\to \phi$ w.r.t. the graph norm. 
By Theorem \ref{workaround_image},  we get 
$$\Vert\mathcal{E}R(\phi_i-\phi)\Vert_D=\Vert\mathcal{E}R(\mathcal{E}R\phi_i-\phi)\Vert_D\leq C\Vert \mathcal{E}R\phi_i -\phi\Vert_D\to 0.$$ Thus, $\mathcal{E}R\phi=\phi$ and $\Vert \psi_i-R\phi\Vert_{\check{R}}= \Vert \mathcal{E} (R\phi_i-R\phi)\Vert_D\to 0$. Hence, $\psi_i\to \psi$ in the $\check{R}$-norm.

Clearly, $\overline{\Gamma_c^\infty(\Sigma, \SS_M|_\Sigma)}^{\Vert. \Vert_{\check{R}}}\subset R(\dom\, D_{\rm max})$. Let now $\psi\in R(\dom\, D_{\rm max})$. Then, there is a $\phi \in \dom\, D_{\rm max}$ with $R\phi=\psi$. By Lemma \ref{dense_in_graph_norm} there is a sequence $\phi_i\in \Gamma_c^\infty(M, \SS_M)$ with $\Vert \phi_i-\phi\Vert_D\to 0$ as $i\to \infty$. Thus, by Theorem \ref{workaround_image} the sequence $\psi_i:= R\phi_i\in \Gamma_c^\infty(\Sigma, \SS_M|_\Sigma)$ converges to $\psi$ in the $\check{R}$-norm.
\end{proof}

\begin{remark}\hfill\label{rem_norms}\\
\textbf{(i)} The proof of Proposition \ref{workaround_image} and \cite[Lemma 6.1]{baer_ballmann_11} implies 
\[\Vert \tilde{\mathcal E} R\phi\Vert_D^2\leq C' \sum_{\gamma, \hat{U}'_\gamma\neq \varnothing} \Vert R(h_\gamma \phi)\Vert_{\check{H}(\hat{U}'_\gamma)}^2 =: C' \Vert R\phi\Vert^2_{\check{H}_\gamma}.\] On the other hand, by  \cite[Lemma 6.2, (41) and below]{baer_ballmann_11} $\Vert R(h_\gamma \phi)\Vert_{\check{H}(\hat{U}'_\gamma)}^2\leq C \Vert h_\gamma \phi\Vert_D^2$ where $C$ again only depends on the curvature bounds of $(M,\Sigma)$ and the spectral gap $c$ on $\hat{U}'_\gamma$. Thus, together with Lemma \ref{lem_equiv_cutoff} the norms $\Vert .\Vert_{\check{R}}$ and $\Vert.\Vert_{\check{H}_\gamma}$ are equivalent.\\
\textbf{(ii)} Using $(i)$ and \cite[Lemma 6.3]{baer_ballmann_11} we see 
\begin{align*}
 \Vert \tilde{\mathcal E} (\nu\cdot R\phi)\Vert_D^2\leq C' \sum_{\gamma, U'_\gamma\neq \varnothing} \Vert \nu \cdot R(h_\gamma \phi)\Vert_{\check{H}(\hat{U}'_\gamma)}^2 = C' \sum_{\gamma, U'_\gamma\neq \varnothing} \Vert R(h_\gamma \phi)\Vert_{\hat{H}(\hat{U}'_\gamma)}^2=: \Vert R\phi\Vert_{\hat{H}_\gamma}^2.
\end{align*}
Together with \cite[Lemma 6.1]{baer_ballmann_11} we obtain for all $\phi\in \Gamma_c^\infty(M,\SS_M)$
\begin{align*}
 \Vert \tilde{\mathcal E} (\nu\cdot R\phi)\Vert_D^2\leq C \Vert \phi\Vert_D^2
\end{align*}
and, thus, $\Vert \psi\Vert_{\hat{R}} := \Vert \mathcal{E}(\nu\cdot R\phi)\Vert_D$ also gives rise to a norm on $R(\dom\, D_{\mathrm{max}})$. Moreover, the analogous statement of Lemma \ref{R_banach} holds for $\hat{R}:=(R(\dom\, D_{\mathrm{max}}), \Vert. \Vert_{\hat{R}})$, and we have $\Vert \psi\Vert_{\check{R}}=\Vert \nu\cdot \psi\Vert_{\hat{R}}$.
In particular, we get as in (i) that the norms $\Vert \tilde{\mathcal E} (\nu\cdot .)\Vert_D$ and $\Vert .\Vert_{\hat{H}_\gamma}$ are equivalent.
\end{remark}

\begin{remark} 
Note that by Theorem \ref{extended-trace}  and \ref{H1_dommax} \[H_{\frac{1}{2}}(\Sigma, \SS_M|_\Sigma)\subset (R(\dom\, D_{\mathrm{max}}), \Vert.\Vert_{\check{R}\, (\text{resp.\ } \hat{R})}) \subset H_{-\frac{1}{2}}(\Sigma, \SS_M|_\Sigma).\]
\end{remark}

Moreover, the perfect pairing of $\hat{H}_\gamma$ and $\check{H}_\gamma$, induced by the pairing of $H_{\frac{1}{2}}$ and  $H_{-\frac{1}{2}}$, gives immediately

\begin{lemma}\label{pair_R}
 The $L^2$-product on $\Gamma_c^\infty(\Sigma, \SS_M|_\Sigma)$ extends uniquely to a perfect pairing $\check{R}\times \hat{R} \to \mC$.
 \end{lemma}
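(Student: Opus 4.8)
The plan is to exhibit the pairing explicitly via the equivalences from Remark \ref{rem_norms} and then transport the perfect pairing of the local $\check H$--$\hat H$ spaces from \cite[Section 6]{baer_ballmann_11}. First I would recall from Remark \ref{rem_norms} that the norm $\Vert.\Vert_{\check R}$ is equivalent to $\Vert.\Vert_{\check H_\gamma} = \big(\sum_{\gamma, \hat U'_\gamma\neq\varnothing} \Vert R(h_\gamma\,\cdot)\Vert_{\check H(\hat U'_\gamma)}^2\big)^{1/2}$, and likewise $\Vert.\Vert_{\hat R}$ is equivalent to $\Vert.\Vert_{\hat H_\gamma}$. On each closed boundary piece $\hat U'_\gamma$, B\"ar--Ballmann's local result \cite[Lemma 6.2 and Section 6]{baer_ballmann_11} states that the $L^2(\hat U'_\gamma)$-product extends to a perfect pairing $\check H(\hat U'_\gamma)\times\hat H(\hat U'_\gamma)\to\mathbb C$; this is what I would quote as the local building block. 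The remaining work is a gluing argument: show that summing these local pairings against the partition of unity $h_\gamma$ gives a well-defined, bounded, nondegenerate bilinear form on $\check R\times\hat R$ that restricts to the $L^2(\Sigma)$-product on $\Gamma_c^\infty(\Sigma,\SS_M|_\Sigma)$, and then invoke density (Lemma \ref{R_banach} and its $\hat R$-analogue) for uniqueness.

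For boundedness, given $\phi\in\check R$ and $\psi\in\hat R$ (both limits of smooth compactly supported spinors), I would estimate $|(\phi,\psi)_\Sigma|$ by inserting $\sum_\gamma h_\gamma = 1$, writing $(\phi,\psi)_\Sigma = \sum_\gamma (h_\gamma\phi,\psi)_\Sigma$ and recognizing each term as a local $\check H(\hat U'_\gamma)$--$\hat H(\hat U'_\gamma)$ pairing; Cauchy--Schwarz over $\gamma$ together with the uniform local finiteness of the cover $U_\gamma$ and the uniform constants in \cite[Lemma 6.2]{baer_ballmann_11} yields $|(\phi,\psi)_\Sigma|\leq C\Vert\phi\Vert_{\check H_\gamma}\Vert\psi\Vert_{\hat H_\gamma}\leq C'\Vert\phi\Vert_{\check R}\Vert\psi\Vert_{\hat R}$. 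Hence the $L^2$-product extends to a bounded bilinear form on $\check R\times\hat R$, and by Lemma \ref{R_banach} (density of $\Gamma_c^\infty(\Sigma,\SS_M|_\Sigma)$ in the $\check R$-norm, and the analogue in the $\hat R$-norm noted in Remark \ref{rem_norms}.ii) this extension is unique.

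For nondegeneracy --- the part I expect to be the main obstacle --- I would argue as follows. Suppose $\phi\in\check R$ pairs to zero with every $\psi\in\hat R$. Recall $\Vert\psi\Vert_{\check R} = \Vert\nu\cdot\psi\Vert_{\hat R}$ (Remark \ref{rem_norms}.ii), so Clifford multiplication by $\nu$ is an isometry $\check R\to\hat R$; thus it suffices to test $\phi$ against all $\nu\cdot\chi$ with $\chi\in\check R$, in particular against $\nu\cdot\chi$ for $\chi\in\Gamma_c^\infty(\Sigma,\SS_M|_\Sigma)$ dense in $\check R$. Working locally, $h_\gamma\phi$ lies in $\check H(\hat U'_\gamma)$ and pairs to zero with all of $\hat H(\hat U'_\gamma)$ (any local test spinor extends by the partition of unity to a global element of $\hat R$, up to the harmless bounded overlaps); by the local perfectness from \cite{baer_ballmann_11} this forces $h_\gamma\phi = 0$ for every $\gamma$, hence $\phi = 0$. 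The delicate point is ensuring the local test spinors really do realize enough of the local dual --- this is handled by choosing, for a given $\gamma_0$, test spinors supported in $U_{\gamma_0}\cap\Sigma$ well inside the region where $h_{\gamma_0}$ is bounded below, so that $\sum_\gamma(h_\gamma\phi,\psi)_\Sigma$ collapses (up to the finitely many overlapping indices, which one disentangles by a further localization) to a single local pairing on $\hat U'_{\gamma_0}$. The symmetric argument, swapping the roles of $\check R$ and $\hat R$ and using $\nu\cdot$ the other way, shows that no nonzero element of $\hat R$ pairs to zero with all of $\check R$, completing the proof that the pairing is perfect.
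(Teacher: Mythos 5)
Your proposal is correct and takes essentially the same route as the paper, which obtains the lemma directly from the norm equivalences in Remark \ref{rem_norms} and the local perfect pairing of $\check{H}(\hat{U}'_\gamma)$ with $\hat{H}(\hat{U}'_\gamma)$ induced by the $H_{\frac{1}{2}}$--$H_{-\frac{1}{2}}$ pairing; your write-up merely makes the gluing, boundedness and density steps explicit. (Your nondegeneracy step can even be shortened: since $\check{R}\subset H_{-\frac{1}{2}}(\Sigma,\SS_M|_\Sigma)$ continuously and $\Gamma_c^\infty(\Sigma,\SS_M|_\Sigma)\subset H_{\frac{1}{2}}(\Sigma,\SS_M|_\Sigma)\subset\hat{R}$, vanishing of the pairing against all compactly supported smooth spinors already forces vanishing in $H_{-\frac{1}{2}}$ by Lemma \ref{pairing-Sobolev}, hence in $\check{R}$, so the patchwise localization is not needed there.)
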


Up to now we have seen that the $\check{R}$-norm is equivalent to the norm $\Vert.\Vert_{\check{H}_\gamma}$, cp. Remark \ref{rem_norms}.i where the second norm comes with an appropriate trivialization of the manifold near the boundary, see before Proposition \ref{workaround_image}. But we also think that as in the closed case there should be a 'more intrinsic' equivalent norm:

\begin{conjecture}
 The $\check{R}$-norm on $R(\dom\, D_{\rm max})$ is equivalent to the $\check{H}$-norm as defined in \eqref{H_spaces}. Moreover, $\check{H}=R(\dom\, D_{\rm max})$ as vector spaces.
\end{conjecture}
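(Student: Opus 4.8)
The strategy I would follow is to peel off the soft part, reducing everything to a single norm equivalence, and then to confront the genuinely analytic core, which is a localization statement for the functional calculus of the boundary Dirac operator $\widetilde{D}^{\Sigma}$.

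By Lemma~\ref{R_banach} the space $\check R$ is the closure of $\Gamma_c^\infty(\Sigma,\SS_M|_\Sigma)$ in the $\check R$-norm, and by \eqref{H_spaces} the space $\check H$ is the closure of the same space in the $\check H$-norm; both norms dominate the $H_{-1/2}$-norm on $\Gamma_c^\infty(\Sigma,\SS_M|_\Sigma)$, so both spaces embed continuously into $H_{-1/2}(\Sigma,\SS_M|_\Sigma)$. Hence it suffices to prove
\[ c\,\Vert\phi\Vert_{\check H}\le\Vert\phi\Vert_{\check R}\le C\,\Vert\phi\Vert_{\check H}\qquad\text{for all }\phi\in\Gamma_c^\infty(\Sigma,\SS_M|_\Sigma),\]
since then the two completions coincide inside $H_{-1/2}(\Sigma,\SS_M|_\Sigma)$ and, because $\check R=R(\dom\, D_{\rm max})$ as sets, the vector-space identity $\check H=R(\dom\, D_{\rm max})$ follows as well. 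By Remark~\ref{rem_norms}(i) the $\check R$-norm is equivalent to the localized norm $\Vert\phi\Vert_{\check H_\gamma}^2=\sum_{\gamma}\Vert h_\gamma\phi\Vert_{\check H(\hat U'_\gamma)}^2$, built from the spectral projections of the Dirac operators on the \emph{closed} pieces $\hat U'_\gamma$. Thus the conjecture is equivalent to the assertion that the global spectral splitting of $\widetilde{D}^{\Sigma}$ at $0$ glues, up to equivalent norms, from the local splittings on the $\hat U'_\gamma$.

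To attack this I would rewrite both sides through functional calculus. Using bounded geometry, $\Vert\cdot\Vert_{H_{\pm1/2}(\Sigma)}$ is equivalent to $\Vert(\Id+(\widetilde{D}^{\Sigma})^2)^{\pm1/4}\,\cdot\,\Vert_{L^2}$, and similarly on each $\hat U'_\gamma$; consequently $\Vert\phi\Vert_{\check H}$ is equivalent to $\Vert g(\widetilde{D}^{\Sigma})\phi\Vert_{L^2(\Sigma)}$ and $\Vert h_\gamma\phi\Vert_{\check H(\hat U'_\gamma)}$ to $\Vert g(D^{\hat U'_\gamma})(h_\gamma\phi)\Vert_{L^2}$, where $g(t)=(1+t^2)^{1/4}$ for $t\le0$ and $g(t)=(1+t^2)^{-1/4}$ for $t>0$; note $g$ is in fact $C^1$ and smooth away from $0$, with only $g''$ jumping at the origin. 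Now split $g=g_\infty+g_0$ with $g_\infty$ smooth — a symbol growing at most like $|t|^{1/2}$ with all derivatives bounded — and $g_0\in C^{1,1}$ supported near $0$. For $g_\infty$ the usual bounded-geometry argument goes through: decompose $g_\infty$ into pieces with Fourier transform supported in dyadic intervals, use the finite propagation speed of the Dirac wave equation to see that each corresponding piece of $g_\infty(\widetilde{D}^{\Sigma})$ has finite propagation with $\gamma$-uniform norm bounds, and conclude that $g_\infty(\widetilde{D}^{\Sigma})(h_\gamma\phi)$ and $g_\infty(D^{\hat U'_\gamma})(h_\gamma\phi)$ differ only by a remainder supported near $\supp\nabla h_\gamma$ and of one order lower; summing over $\gamma$ with local finiteness settles the $g_\infty$-part — this is essentially the estimate already behind \cite[Lemmas~6.1--6.3]{baer_ballmann_11} and the proof of Proposition~\ref{workaround_image}. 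It remains to run the same dyadic/finite-propagation scheme for $g_0(\widetilde{D}^{\Sigma})$, now using only $g_0\in C^{1,1}$, to bound $\Vert[g_0(\widetilde{D}^{\Sigma}),h_\gamma]\Vert$ $\gamma$-uniformly and to match $g_0(\widetilde{D}^{\Sigma})$ with $g_0(D^{\hat U'_\gamma})$ near $\supp h_\gamma$.

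The main obstacle is precisely this last step, and it is the reason the statement is left as a conjecture: $g_0$ encodes the \emph{sharp} spectral cut of $\widetilde{D}^{\Sigma}$ at $0$, and when $0$ lies in the essential spectrum with no spectral gap the relevant function of $\widetilde{D}^{\Sigma}$ is genuinely non-smooth at the origin, so the product of the operator norm and the propagation length of its dyadic pieces does not obviously decay fast enough to be summable. In the closed-piece construction of B\"ar and Ballmann this is circumvented by arranging each $\hat U'_\gamma$ to have a spectral gap $[-c,c]$ (cf.\ \eqref{BB_Thm} and \cite[(43) ff.]{baer_ballmann_11}), a hypothesis one cannot impose on the global boundary $\Sigma$. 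A complete proof would therefore require a quantitative localization estimate for the functional calculus of $\widetilde{D}^{\Sigma}$ uniform down to the origin — or, alternatively, a more intrinsic description of $R(\dom\, D_{\rm max})$ that bypasses the spectral splitting altogether.
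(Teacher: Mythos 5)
There is no proof to compare against: the statement you were given is precisely the \emph{Conjecture} that the authors leave open (they only establish, in Remark~\ref{rem_norms}.i, that $\Vert\cdot\Vert_{\check{R}}$ is equivalent to the auxiliary, chart-dependent norm $\Vert\cdot\Vert_{\check{H}_\gamma}$ built from the closed pieces $\hat{U}'_\gamma$, and explicitly say they do not know whether the intrinsic $\check{H}$-norm of \eqref{H_spaces} works). Your proposal does not prove it either, and you say so yourself: the soft reduction is fine -- both norms dominate the $H_{-\frac{1}{2}}$-norm on $\Gamma_c^\infty(\Sigma,\SS_M|_\Sigma)$, $\check{R}$ is the $\check{R}$-closure of that space by Lemma~\ref{R_banach}, so equivalence of the two norms on $\Gamma_c^\infty(\Sigma,\SS_M|_\Sigma)$ would give $\check{H}=\check{R}=R(\dom\, D_{\rm max})$ -- but the entire analytic content is then pushed into comparing the global spectral splitting of $\widetilde{D}^{\Sigma}$ at $0$ with the local splittings on the $\hat{U}'_\gamma$, and that is exactly the step you leave open.

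Concretely, the gap is your ``$g_0$-part'': you need $\gamma$-uniform bounds on $[g_0(\widetilde{D}^{\Sigma}),h_\gamma]$ and a $\gamma$-uniform comparison of $g_0(\widetilde{D}^{\Sigma})$ with $g_0(D^{\hat{U}'_\gamma})$ near $\supp h_\gamma$, where $g_0$ encodes the sharp cut of the spectrum at $0$. Finite propagation speed only controls functions with compactly supported (or rapidly decaying) Fourier transform, and when $0$ lies in the essential spectrum of $\widetilde{D}^{\Sigma}$ with no gap there is no obvious way to regularize $g_0$ without changing the $\check{H}$-norm by an uncontrolled amount; the B\"ar--Ballmann estimates you cite (\eqref{BB_Thm}, \cite[Lemmas 6.1--6.3]{baer_ballmann_11}) are available precisely because each $\hat{U}'_\gamma$ was \emph{constructed} to have a spectral gap $[-c,c]$, a device that cannot be imposed on $\Sigma$ itself. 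A secondary, smaller point: your identification of $\Vert\cdot\Vert_{H_{\pm\frac{1}{2}}(\Sigma)}$ with $\Vert(\Id+(\widetilde{D}^{\Sigma})^2)^{\pm\frac{1}{4}}\cdot\Vert_{L^2}$ on a noncompact $\Sigma$ of bounded geometry is itself only asserted; the paper proves the corresponding equivalence only for integer Sobolev orders and would need an interpolation/duality argument here. So the proposal is a sensible reduction and a correct diagnosis of where the difficulty sits, but it does not establish the statement -- which is consistent with the authors' decision to state it as a conjecture.
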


{\bf Boundary conditions.}
In this part, we show that each closed extension of $D_{cc}$ can be realized by a closed linear subset of $\check{R}$, and we give some examples.

 \begin{lem}\label{Bmax}
Let $D$ be a closed extension  of $D_{cc}$ with $B:=R(\dom\, D)\subset H_{-\frac{1}{2}} (\Sigma, \SS_M|_\Sigma)$. Then, 
 its domain $\dom\, D$ equals $\dom\, D_B\colon=\{ \phi\in \dom\, D_{\mathrm{max}}\ |\ R\phi\in B\}$, and $B$ is a closed linear subset of $\check{ R}$. Conversely, for every closed linear subset $B\subset \check{ R}$ the operator $D_B\colon \dom\, D_B \to L^2(M,\SS_M)$ is a closed extension of $D_{cc}$.
 \end{lem}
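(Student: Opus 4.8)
The plan is to verify the two directions separately, using the extended trace map from Theorem \ref{extended-trace}, the closed-range/graph-norm machinery around \eqref{ER_ext}, and the Hilbert space structure of $\check R$ from Lemma \ref{R_banach}. First I would treat the forward direction: let $D$ be a closed extension of $D_{cc}$ and $B:=R(\dom\, D)$. Since $D_{cc}\subset D\subset D_{\mathrm{max}}$, the inclusion $\dom\, D\subset \dom\, D_B$ is immediate from the definition of $D_B$. For the reverse inclusion $\dom\, D_B\subset \dom\, D$, take $\phi\in \dom\, D_{\mathrm{max}}$ with $R\phi\in B$; by definition of $B$ there is $\phi'\in\dom\, D$ with $R\phi'=R\phi$, so $\phi-\phi'\in\dom\, D_{\mathrm{max}}$ with $R(\phi-\phi')=0$, hence $\phi-\phi'\in\dom\, D_{\mathrm{min}}\subset\dom\, D$ by Lemma \ref{norm_equ_0} and the fact that any closed extension of $D_{cc}$ contains $D_{\mathrm{min}}$. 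Therefore $\phi=\phi'+(\phi-\phi')\in\dom\, D$, giving $\dom\, D=\dom\, D_B$.

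Next I would show $B$ is a closed linear subset of $\check R$. Linearity is clear since $R$ is linear. For closedness, let $\psi_i\in B$ with $\psi_i\to\psi$ in the $\check R$-norm. Pick $\phi_i\in\dom\, D$ with $R\phi_i=\psi_i$. The key point is to replace $\phi_i$ by a more convenient representative: by the argument in the proof of Lemma \ref{R_banach}, $\mathcal{E}R\phi_i=\mathcal E\psi_i$ is a Cauchy sequence in $(\dom\, D_{\mathrm{max}},\Vert.\Vert_D)$, so it converges to some $\phi\in\dom\, D_{\mathrm{max}}$ with $\mathcal ER\phi=\phi$ and $R\phi=\psi$. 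Now $\phi_i-\mathcal ER\phi_i\in\dom\, D_{\mathrm{max}}$ has vanishing trace, hence lies in $\dom\, D_{\mathrm{min}}\subset\dom\, D$; since $\phi_i\in\dom\, D$ as well, $\mathcal ER\phi_i\in\dom\, D$. As $\mathcal ER\phi_i\to\phi$ in the graph norm and $D$ is closed, $\phi\in\dom\, D$, so $\psi=R\phi\in B$. Thus $B$ is closed in $\check R$.

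For the converse, let $B\subset\check R$ be a closed linear subset and set $\dom\, D_B=\{\phi\in\dom\, D_{\mathrm{max}}\mid R\phi\in B\}$. Since $B\supset\{0\}$ (it is a linear subset) and every $\phi\in\dom\, D_{\mathrm{min}}$ has $R\phi=0$ by Lemma \ref{norm_equ_0}, we get $\dom\, D_{\mathrm{min}}\subset\dom\, D_B\subset\dom\, D_{\mathrm{max}}$, so $D_B$ is an extension of $D_{cc}$. It remains to check that $D_B$ is closed, i.e. that $\dom\, D_B$ is closed in $(\dom\, D_{\mathrm{max}},\Vert.\Vert_D)$. Let $\phi_i\in\dom\, D_B$ with $\phi_i\to\phi$ in the graph norm. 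By Theorem \ref{extended-trace}, $R\colon\dom\, D_{\mathrm{max}}\to H_{-\frac12}$ is bounded, and by \eqref{ER_ext} the map $\tilde{\mathcal E}R$ is bounded on $\dom\, D_{\mathrm{max}}$; hence $\mathcal ER\phi_i\to\mathcal ER\phi$ in the graph norm, which by definition of $\Vert.\Vert_{\check R}$ means $R\phi_i\to R\phi$ in $\check R$. Since $R\phi_i\in B$ and $B$ is closed in $\check R$, we get $R\phi\in B$, so $\phi\in\dom\, D_B$; thus $D_B$ is closed.

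The main obstacle is the argument in the second paragraph: one cannot directly conclude closedness of $B$ from closedness of $D$ because a $\check R$-Cauchy sequence of boundary values need not lift to a graph-norm-convergent sequence in $\dom\, D$. The trick — passing from an arbitrary lift $\phi_i$ to the canonical lift $\mathcal ER\phi_i$, whose convergence in $(\dom\, D_{\mathrm{max}},\Vert.\Vert_D)$ is built into the definition of the $\check R$-norm, and then using Lemma \ref{norm_equ_0} to see that this replacement stays inside $\dom\, D$ — is exactly what makes the argument go through, and it crucially uses that $\tilde{\mathcal E}$ maps into $\dom\, D_{\mathrm{max}}$ with control in the graph norm, i.e. Proposition \ref{workaround_image} together with \eqref{ER_ext}.
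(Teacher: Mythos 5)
Your proposal is correct and follows essentially the same route as the paper: the identity $\dom\, D=\dom\, D_B$ via Lemma \ref{norm_equ_0}, and both closedness statements via the continuity of $R\colon \dom\, D_{\mathrm{max}}\to\check{R}$ and $\mathcal{E}\colon\check{R}\to\dom\, D_{\mathrm{max}}$ coming from Proposition \ref{workaround_image} and \eqref{ER_ext}. The only cosmetic difference is that the paper compresses the closedness of $B$ into the identity $B=\mathcal{E}^{-1}(\dom\, D)$ together with continuity of $\mathcal{E}$, whereas your sequential argument with the canonical lift $\mathcal{E}R\phi_i$ and the trace-zero adjustment makes exactly that identification explicit.
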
 

 Due to this Lemma, a closed subspace $B$ of $\check{ R}$ is called {\it boundary condition}. 
 
 \begin{proof}
  Let $D$ be a closed extension of $D_{cc}$ with domain $\dom\, D$ and $B:= R(\dom\, D)$. Clearly, $\dom\, D\subset \dom\, D_B$. 
 We have to show that also the converse is true: Let $\phi \in \dom\, D_B$. Then,  there exists
 $\psi \in \dom\, D$  with $R \phi= R\psi$. By Lemma \ref{norm_equ_0}, 
 $\phi-\psi\in \dom\, D_{\mathrm{min}}\subset \dom\, D$ and, hence, $\phi\in \dom\, D$. This implies that $\dom\, D=\dom\, D_B$. 
Moreover, from \eqref{ER_ext} and the definition of the $\check{R}$-norm the maps $R: \dom\, D_{\mathrm{max}} \to \check{ R}$ and $\mathcal{E}: \check{ R}\to \dom\, D_{\mathrm{max}}$ are continuous. Hence, if $\dom\, D$ is closed in $\dom\, D_{\mathrm{max}}$, the set $B=\mathcal{E}^{-1}(\dom\, D)$ is closed in $R(\dom\, D_{\mathrm{max}})$. Conversely, if $B$ is closed in $\check{ R}$, 
$\dom\, D=R^{-1}(B)$ is closed in $\dom\, D_{\mathrm{max}}$.
\end{proof}

 \begin{lem}\label{equiv_H1_D}
  Let  $B$ be a boundary condition such that $B\subset H_{\frac{1}{2}}(\Sigma, \SS_M|_\Sigma)$. Then, the $H_1$-norm and the graph norm $\Vert.\Vert_D$ are equivalent on $\dom\, D_B$.
 \end{lem}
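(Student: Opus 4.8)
The plan is to exploit the description of $H_1$ from Lemma \ref{H1_dommax} together with the Reilly-type identity \eqref{Lich} (valid on all of $H_1$ by Lemma \ref{extequ}) and the perfect pairing between $\check R$ and $\hat R$ from Lemma \ref{pair_R}. First I would note that since $B\subset H_{\frac{1}{2}}(\Sigma,\SS_M|_\Sigma)$, Lemma \ref{H1_dommax} gives $\dom\, D_B\subset H_1(M,\SS_M)$, so the $H_1$-norm is indeed defined on $\dom\, D_B$; the inequality $\Vert D\phi\Vert_{L^2}^2\leq (n+1)\Vert\nabla\phi\Vert_{L^2}^2$ from \eqref{equ_H1D_easydir} already yields $\Vert\phi\Vert_D\leq C\Vert\phi\Vert_{H_1}$ for free. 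The content is the reverse estimate $\Vert\phi\Vert_{H_1}\leq C\Vert\phi\Vert_D$ on $\dom\, D_B$.

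For the reverse direction, apply \eqref{Lich} to $\phi\in\dom\, D_B$. The curvature terms $\int_M\frac{\s^M}{4}|\phi|^2$ and $\int_M\frac{\i}{2}\langle\Omega\cdot\phi,\phi\rangle$ are bounded by $C\Vert\phi\Vert_{L^2}^2$ using the bounded geometry of $M$ and $L$, exactly as in the proof of Lemma \ref{norm_equ_0}. So everything reduces to controlling the boundary term $\int_\Sigma\langle R\phi, \widetilde{D}^\Sigma R\phi\rangle\, ds$ (recall $D^W=\widetilde D^\Sigma-\frac n2 H$, and the $H$-term is again absorbed into $\Vert R\phi\Vert_{L^2(\Sigma)}^2\leq C\Vert\phi\Vert_{H_1}^2$, which is not yet under control, so one must be slightly careful — but see below). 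The key point is that on $B\subset H_{\frac12}$, the operator $\widetilde{D}^\Sigma$ together with the trace provides a bound: one would like $|\int_\Sigma\langle R\phi,\widetilde D^\Sigma R\phi\rangle\, ds|\leq C\Vert R\phi\Vert_{\check R}^2\leq C\Vert\phi\Vert_D^2$, using that $\widetilde D^\Sigma$ maps $\check R$-type data to $\hat R$-type data continuously (this is essentially Remark \ref{rem_norms}, where $\nu\cdot$ and hence, via \eqref{d1}, the odd part of $\widetilde D^\Sigma$ intertwines $\check R$ and $\hat R$) paired against the $\check R$-datum via Lemma \ref{pair_R}.

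More concretely, the clean route is: take $\phi\in\dom\, D_B$ and set $\psi=\tilde{\mathcal E}R\phi\in H_1(M,\SS_M)$ (by \eqref{ER_ext} and the remark after it, $\tilde{\mathcal E}$ maps $\check R$-data into $H_1$ with $\Vert\psi\Vert_D\leq C\Vert\phi\Vert_D$ after identifying norms). Write $\phi=\psi+(\phi-\psi)$ where $R(\phi-\psi)=0$, so $\phi-\psi\in\dom\, D_{\min}\subset H_1$ with $\Vert\phi-\psi\Vert_{H_1}\leq C\Vert\phi-\psi\Vert_D\leq C(\Vert\phi\Vert_D+\Vert\psi\Vert_D)\leq C\Vert\phi\Vert_D$ by Lemma \ref{norm_equ_0}. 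Hence $\Vert\phi\Vert_{H_1}\leq\Vert\psi\Vert_{H_1}+\Vert\phi-\psi\Vert_{H_1}$, and it only remains to bound $\Vert\psi\Vert_{H_1}=\Vert\tilde{\mathcal E}R\phi\Vert_{H_1}$. Now apply \eqref{Lich} to $\psi$ itself: $\Vert\psi\Vert_{H_1}^2=\Vert\psi\Vert_{L^2}^2+\Vert D\psi\Vert_{L^2}^2-\int_M\frac{\s^M}{4}|\psi|^2-\int_M\frac{\i}{2}\langle\Omega\cdot\psi,\psi\rangle+\int_\Sigma\langle R\psi, D^W R\psi\rangle\, ds$, and $R\psi=R\phi\in B$. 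The first four terms are $\leq C\Vert\psi\Vert_D^2\leq C\Vert\phi\Vert_D^2$, and for the boundary term one uses the pairing of Lemma \ref{pair_R}: $|\int_\Sigma\langle R\phi, D^W R\phi\rangle\, ds|\leq\Vert D^W R\phi\Vert_{\hat R}\,\Vert R\phi\Vert_{\check R}\leq C\Vert R\phi\Vert_{\check R}^2\leq C\Vert\phi\Vert_D^2$, where the middle inequality is Remark \ref{rem_norms}.ii combined with \eqref{diracgauss}/\eqref{d1} giving that $D^W$ (equivalently $\widetilde D^\Sigma$, up to the bounded multiplier $H$) is bounded $\check R\to\hat R$. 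This chain gives $\Vert\phi\Vert_{H_1}\leq C\Vert\phi\Vert_D$, completing the proof.

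The main obstacle is the boundary term: one must verify that $\widetilde D^\Sigma$ (or $D^W$) indeed acts continuously from $\check R$ to $\hat R$ so that, paired against $\check R$ via Lemma \ref{pair_R}, it produces a bound by $\Vert R\phi\Vert_{\check R}^2$. On $H_{\frac12}$ this is classical ($\widetilde D^\Sigma\colon H_{\frac12}\to H_{-\frac12}$ and $H_{\frac12}\hookrightarrow\check R\hookrightarrow H_{-\frac12}$, while $\hat R$ sits between the same spaces); but to get the uniform constant independent of the point on the noncompact $\Sigma$ one should track it through the local pieces $\hat U'_\gamma$ exactly as in Remark \ref{rem_norms}, using that the spectral gaps and geometric bounds are uniform in $\gamma$. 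Once this continuity with uniform constant is in hand, the rest is the bookkeeping above.
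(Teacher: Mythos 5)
Your easy direction and the reduction to the boundary term are fine, but the crucial step of your reverse estimate does not hold: you claim that $\widetilde{D}^{\Sigma}$ (equivalently $D^W$) is bounded from $\check{R}$ to $\hat{R}$, "essentially by Remark \ref{rem_norms}", so that the pairing of Lemma \ref{pair_R} gives $\bigl|\int_\Sigma\langle R\phi, D^W R\phi\rangle\,ds\bigr|\leq C\Vert R\phi\Vert_{\check{R}}^2\leq C\Vert\phi\Vert_D^2$. Remark \ref{rem_norms}.ii is a statement about the zeroth-order map $\nu\cdot$, and \eqref{d1} only says that $\widetilde{D}^{\Sigma}$ anticommutes with $\nu\cdot$; neither gives the mapping property you need. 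In fact it is false: $\widetilde{D}^{\Sigma}$ commutes with the spectral projections $\pi_\leq,\pi_>$, and on the positive spectral part the $\check{H}$-type norm only controls $H_{-\frac{1}{2}}$-regularity, so $\widetilde{D}^{\Sigma}\pi_>\psi$ is merely in $H_{-\frac{3}{2}}$, whereas the $\hat{H}$-type norm demands $H_{\frac{1}{2}}$ there — a loss of two derivatives (spinors concentrated at high positive spectrum $\lambda$ make the ratio blow up like $\lambda^2$). Restricting to $R\phi\in B\subset H_{\frac{1}{2}}$ does not repair this, because the only quantitative trace bound available on $\dom D_{\mathrm{max}}$ is $\Vert R\phi\Vert_{H_{-\frac{1}{2}}}\leq C\Vert\phi\Vert_D$; any attempt to bound the boundary term through $\Vert R\phi\Vert_{H_{\frac{1}{2}}}$ is circular at this stage (the same circularity affects your side remark about the $H\vert R\phi\vert^2$ term). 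So the chain $\Vert\tilde{\mathcal{E}}R\phi\Vert_{H_1}\leq C\Vert\phi\Vert_D$ is not established, and the proposal has a genuine gap.

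The paper avoids any such explicit estimate by a purely soft argument: since $B$ is closed in $\check{R}$, $\dom D_B$ is closed in $(\dom D_{\mathrm{max}},\Vert.\Vert_D)$; since $B\subset H_{\frac{1}{2}}$, Lemma \ref{H1_dommax} gives $\dom D_B\subset H_1$, and together with \eqref{equ_H1D_easydir} it is also closed in $(H_1,\Vert.\Vert_{H_1})$; both are then Hilbert spaces and the bounded inverse theorem applied to the identity map yields the equivalence. Some input of this kind seems unavoidable for a general boundary condition $B$: for instance, one can first show via the closed graph theorem that $R\colon(\dom D_B,\Vert.\Vert_D)\to H_{\frac{1}{2}}(\Sigma,\SS_M|_\Sigma)$ is bounded (using the extended trace map to identify limits), and only then does your Lichnerowicz bookkeeping with the $H_{\frac{1}{2}}\times H_{-\frac{1}{2}}$ pairing close up. As written, your argument supplies no such $B$-dependent mechanism, so the reverse inequality remains unproved.
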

 
 \begin{proof}
  Since $B$ is a boundary condition, $\dom\, D_B$ is closed in $(\dom\, D_{\mathrm{max}}, \Vert. \Vert_D)$. Moreover, by $B\subset H_{\frac{1}{2}}(\Sigma, \SS_M|_\Sigma)$, Lemma \ref{H1_dommax} and \eqref{equ_H1D_easydir},  $\dom\, D_B$ is closed in $(H_1(M, \SS_M), \Vert.\Vert_{H_1})$. Thus, $(\dom\, D_B, \Vert.\Vert_D)$ and $(\dom\, D_B, \Vert.\Vert_{H_1})$ are both Hilbert spaces. By \eqref{equ_H1D_easydir} the identity map $\Id\colon (\dom\, D_B, \Vert.\Vert_{H_1})\to (\dom\, D_B, \Vert.\Vert_D)$ is a bijective bounded linear map. From the bounded inverse theorem we know that also the inverse is bounded. Hence, the $H_1$- and the graph norm are equivalent on $\dom\, D_B$. 
 \end{proof}

\begin{remark}\label{comparebb}  
The definition of $\dom\, D_B$ in \cite[Section 7]{baer_ballmann_11} uses $H_1^D\colon=\overline{\Gamma_c^\infty(M,\SS_M)}^{\Vert.\Vert_{H_1^D}}$ instead of $H_1$ where the $H_1^D$-norm
is given by $$\Vert \phi\Vert_{H_1^D}^2=\Vert \chi \phi\Vert_{H_1}^2 + \Vert \phi\Vert_{L^2}^2 + \Vert D\phi\Vert_{L^2}^2.$$
Here $\chi$ denotes an appropriate cut-off function such that $\chi\phi$ only lives on a small collar of the boundary. Since we work with the classical Dirac operator on $\Spinc$ manifolds and assume $(M,\Sigma)$ and $L$ being of 
bounded geometry, the $H_1$- and the $H_1^D$-norm coincide. Ch. B\"ar and W. Ballmann consider a more general situation where it suffices that
 $M$ is only complete but not necessarily of bounded geometry. Then the $H_1^D$-norm is needed. We could also switch 
to this more general setup when dropping the condition (i) and (iii) in the Definition \ref{bdd_geo} while still 
assuming that $(\Sigma, g|_\Sigma)$ is of bounded geometry and that the curvature tensor and its derivatives
 are bounded on $U_\Sigma$. For that situation, we would also obtain Theorem \ref{main}. But
 in order to simplify 
notation we stick to the bounded geometry of $(M,\Sigma)$.
\end{remark}

\begin{example}\label{ex_bd_cond}
\begin{itemize}
 \item[(i)] {\bf Minimal and maximal extension.} $B={0}$ gives the minimal extension $D_{B=0}=D_{\mathrm{min}}$, cf. Lemma \ref{norm_equ_0}. The maximal extension is obtained with $B=R(\dom\, D_\mathrm{max})$.
 \item[(ii)] $D_{B=H_{\frac{1}{2}}}\colon H_1(M,\SS_M)\to L^2(M,\SS_M)$ is an extension of $D_{cc}$ but not closed (if the boundary is nonempty): Since  $\Gamma_c^\infty(M,\SS_M)\subset H_1$ and $\Gamma_c^\infty(M,\SS_M)$ dense in $\dom\, D_{\mathrm{max}}$, the closure of ${D}_{B=H_{\frac{1}{2}}}$ is $D_{\mathrm{max}}$.
\item[(iii)]  \cite[Section 6]{HMZ02}
Let $P_\pm: L^2 (\Sigma, \SS_M|_\Sigma) \to L^2 (\Sigma, \SS_M|_\Sigma), \ \phi\mapsto \frac{1}{2}(\phi  \pm \i \nu\cdot \phi)$ and
 \[D_\pm\colon \dom\, D_\pm:=\{\phi\in \dom\, D_{\mathrm{max}}\ |\  P_\pm R\phi=0\}\to L^2(M,\SS_M).\] 
 In Section \ref{boundcond_B+-},  we will show that 
$D_\pm$ is a closed extension and that $D_\pm=D_{B_\pm}$ where 
\[B_\pm=\{\phi\in H_\frac{1}{2}(\Sigma,\SS_M|_\Sigma)\ | \ P_\pm\phi=0\}. \]

Each $\phi$ decomposes uniquely into $\phi=P_+ \phi +P_-\phi$, and if $\phi\in H_\frac{1}{2}(\Sigma, \SS_M|_\Sigma)$, then
$P_\pm\phi\in H_\frac{1}{2}(\Sigma, \SS_M|_\Sigma)$, too. This assures that the $B_\pm$'s are honestly larger than the trivial boundary condition $B=\{0\}$. More properties of this boundary condition can be found in Section \ref{boundcond_B+-}. 

 \item[(iv)] {\bf APS boundary conditions.} 
An obvious way to generalize the APS boundary conditions for a closed boundary to our situation is given by the following:
Let $(M,\Sigma)$ be of bounded geometry. We use the notations introduced in Section \ref{trace_theorem}.

We set  $B^{\mathrm{APS}}_{\geq a}=R(\dom\, D_\mathrm{max})\cap  \Gamma_{[a,\infty)}^{\mathrm{APS}}$ and $B^{\mathrm{APS}}_{< a}=R(\dom\, D_\mathrm{max})\cap  \Gamma_{ (-\infty, a]}^{\rm APS}$, respectively. In the same ways, let $B^{\mathrm{APS}}_{\leq a}$ and $B^{\mathrm{APS}}_{> a}$ be defined. If a neighbourhood of $a$ is in the spectrum of $D^\Sigma$,  $B^{\mathrm{APS}}_{< a}$ and $ B^{\mathrm{APS}}_{> a}$ won't be closed. 
We conjecture that for $(M,\Sigma)$ of bounded geometry the sets $B^{\mathrm{APS}}_{\geq a}$ and $B^{\mathrm{APS}}_{\leq a}$ define boundary conditions. But actually we don't know.
\end{itemize}
\end{example}

{\bf Boundary value problems.}  In this part we want to prove Theorem \ref{intro-bvp}. For that we need to define first the notion coercivity at infinity: 

\begin{definition}\label{coer}
A closed linear operator $D\colon \dom\, D\subset L^2(M,\SS_M)\to L^2(M,\SS_M)$ is said to be $(\dom\, D)$-coercive at infinity if there is a $c>0$ such that 
\[ \forall \phi\in \dom\, D\cap \left( \ker D\right)^\perp:\ \Vert D\phi\Vert_{L^2} \geq c\Vert \phi\Vert_{L^2}\]
where $\!\!\phantom{,}^\perp$ denotes the orthogonal complement in $L^2$.
\end{definition}

Note that in case that $D$ is the Dirac operator on a complete manifold without boundary, coercitivity at infinity follows immediately if $0$ is not the essential spectrum. Conversely if the Dirac operator is coercive at infinity then either $0$ is not in the essential spectrum or the kernel is infinite-dimensional. For manifolds with boundary, $D$ is in general no longer self-adjoint. Thus, the spectrum is in general complex and this translation to the essential spectrum is not possible.

In Section \ref{section_coer}, we will compare this coercivity condition with the originally one used in \cite[Defintion 8.2]{baer_ballmann_11} for closed boundaries. But first,  we will see how this condition forces the range of the operator to be closed which is crucial in order to apply the Closed Range Theorem \ref{closed_range_theorem} and  show existence of preimages for linear operator as we will need in Theorem \ref{intro-bvp}.

\begin{lemma} \label{coercive_closed}
If the closed linear operator $D\colon \dom\, D\subset L^2(M,\SS_M)\to L^2(M,\SS_M)$ is $(\dom\, D)$-coercive at infinity, then the range is closed.
\end{lemma}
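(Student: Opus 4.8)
The plan is to show that $\ran D$ is closed by means of the following standard characterization: a closed operator $T$ has closed range if and only if there is a constant $c>0$ such that $\Vert Tx\Vert\geq c\,\mathrm{dist}(x,\ker T)$ for all $x\in\dom T$. So the first step is to observe that $\dom D$ decomposes $L^2$-orthogonally as $(\dom D\cap\ker D)\oplus(\dom D\cap(\ker D)^\perp)$; this uses that $\ker D$ is closed in $L^2$ (since $D$ is a closed operator) and that $D$ vanishes on the first summand, so that $\mathrm{dist}_{L^2}(\phi,\ker D)=\Vert\phi'\Vert_{L^2}$ where $\phi'$ is the component of $\phi$ in $\dom D\cap(\ker D)^\perp$. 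Here one must check that projecting onto $(\ker D)^\perp$ keeps a vector in $\dom D$, which is immediate because $\ker D\subset\dom D$.

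The second step is to take a sequence $\psi_i=D\phi_i\in\ran D$ with $\psi_i\to\psi$ in $L^2$, and, using the decomposition above, replace $\phi_i$ by its component $\phi_i'\in\dom D\cap(\ker D)^\perp$, which has the same image $D\phi_i'=D\phi_i=\psi_i$. By the $(\dom D)$-coercivity hypothesis applied to $\phi_i'-\phi_j'\in\dom D\cap(\ker D)^\perp$ we get
\[
\Vert\phi_i'-\phi_j'\Vert_{L^2}\leq \tfrac{1}{c}\Vert D(\phi_i'-\phi_j')\Vert_{L^2}=\tfrac{1}{c}\Vert\psi_i-\psi_j\Vert_{L^2},
\]
so $(\phi_i')$ is Cauchy in $L^2$ and converges to some $\phi'\in L^2$. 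Then $\phi_i'\to\phi'$ and $D\phi_i'=\psi_i\to\psi$ in $L^2$; since $D$ is closed, $\phi'\in\dom D$ and $D\phi'=\psi$, hence $\psi\in\ran D$. Therefore $\ran D$ is closed.

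I do not expect any serious obstacle here: the only point requiring a little care is that subtracting the kernel component is legitimate, i.e. that $\dom D\cap(\ker D)^\perp$ is itself a closed subspace on which the coercivity estimate is available and on which $D$ has trivial kernel, which is exactly what Definition \ref{coer} is tailored to provide. Everything else is the textbook argument that a bounded-below closed operator (after quotienting out the kernel) has closed range.
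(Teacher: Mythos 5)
Your argument is correct, and it shares the paper's first step -- replacing $\phi_i$ by its component orthogonal to $\ker D$ (legitimate since $\ker D$ is closed and contained in $\dom D$) so that the coercivity estimate applies -- but it concludes differently. The paper only uses coercivity to deduce that the kernel-orthogonal preimages are \emph{bounded} in $L^2$, hence in the graph norm, and then extracts a weak limit in the Hilbert space $(\dom D_{\max},\Vert\cdot\Vert_D)$, identifying it as a preimage of $\psi$ by pairing against elements of $\dom D^*$. You instead apply the coercivity estimate to the \emph{differences} $\phi_i'-\phi_j'$, which lie in $\dom D\cap(\ker D)^\perp$ by linearity, obtaining $\Vert\phi_i'-\phi_j'\Vert_{L^2}\leq c^{-1}\Vert\psi_i-\psi_j\Vert_{L^2}$; since $(\psi_i)$ converges it is Cauchy, so $(\phi_i')$ converges strongly in $L^2$, and closedness of the graph of $D$ immediately gives $D\phi'=\psi$. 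Your route is more elementary (no weak compactness, no adjoint) and even yields strong convergence of the chosen preimages, while the paper's weak-limit argument is the one that generalizes when only boundedness, rather than a Cauchy estimate, is available. One cosmetic remark: your closing comment that $\dom D\cap(\ker D)^\perp$ should be a closed subspace is not actually needed (and is not asserted by Definition \ref{coer}); all your argument uses is that this set is a linear subspace of $\dom D$ on which the estimate $\Vert D\phi\Vert_{L^2}\geq c\Vert\phi\Vert_{L^2}$ holds, which is exactly what the definition provides.
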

\begin{proof}
Let $\phi_i$ be a sequence in $\dom\, D$ with $D\phi_i\to \psi$ in $L^2$. We have to show that $\psi$ is in the image of $D$.
W.l.o.g. we can assume that $\phi_i \perp \ker D$. Then  $(\dom\, D)$-coercivity at infinity gives that $\phi_i$ is bounded in $L^2$ and, thus, also in the
graph norm of $D$. Thus, $\phi_i\to \phi$ weakly in $\Vert. \Vert_D$. Let $\eta\in \dom\, D^*$. Then, $(D\phi,\eta)=\lim_{i\to\infty} (D\phi_i, \eta)=\lim_{i\to\infty} (\phi_i, D^*\eta)= (\phi, D^*\eta)$. Thus, $\phi\in \dom\, D$ and closedness of $\dom\, D$ then implies 
that $D\phi=\psi$.
\end{proof}

We are now ready to prove

\begin{reptheorem}{intro-bvp} Let $B$ be a boundary condition, and let the Dirac operator 
$$D_B\colon \dom\, D_B\subset L^2(M, \SS_M) \to L^2(M, \SS_M)$$ be $B$-coercive at infinity. Let $P_{B}\colon R(\dom\, D_\mathrm{max})\to B$ be a projection. 
Then, for all $\psi\in L^2(M, \SS_M)$ and $\tilde{\rho}\in \dom\, D_\mathrm{max}$ where $\psi-D\tilde{\rho}\in (\ker\, (D_{B})^*)^\perp$, the boundary value problem
$$\left\{
\begin{array}{rll}
 D\phi &=\psi  & \text{on\ } M,\\
(\Id - P_{B})R\phi& = (\Id - P_{B}) R\tilde{\rho}&\text{on\ } \Sigma
\end{array}
\right.
$$
has a solution $\phi\in \dom\, D_\mathrm{max}$ that is unique up to elements of the kernel $\ker\, D_B$.
\end{reptheorem}

Projection only means here that $P_B$ is linear and $P_B|_B=\Id$.
\begin{proof}
Since $D$ is $B$-coercive at infinity, its range is closed by Lemma \ref{coercive_closed}. Thus, 
due to the Closed Range Theorem \ref{closed_range_theorem}, the spinor $\psi-D\tilde{\rho}\in \mathrm{ran}\, D_B$. Hence, there exists $\hat{\phi}\in \dom\, D_B$ with $D\hat{\phi}=
\psi-D\tilde{\rho}$. Setting $\phi=\hat{\phi}+\tilde{\rho}$, we get $\phi\in \dom\, D_\mathrm{max}$, $D\phi=\psi$, and $(\Id - P_{B})R\phi= (\Id - P_{B})R\hat{\phi}+ (\Id - P_{B})R\tilde{\rho}= (\Id - P_{B})R\tilde{\rho}$.  
\end{proof}
\begin{corollary} \label{bvp-H1} Let $B$ be a boundary condition such that $B\subset H_{\frac{1}{2}}(\Sigma, \SS_M|_\Sigma)$. We 
assume that the Dirac operator $D\colon \dom\, D_B\subset L^2(M, \SS_M) \to L^2(M, \SS_M)$ is
 $B$-coercive at infinity. Let $P_B\colon H_{\frac{1}{2}}(\Sigma,\SS_M|_\Sigma)\to B$ be a projection. Moreover, assume that 
$\psi\in L^2(M, \SS_M)$ and $\rho\in H_{\frac{1}{2}}(\Sigma, \SS_M|_\Sigma)$  satisfy
\begin{equation}\label{int_cond} (\psi,\chi)+(\nu\cdot \rho, R\chi)_\Sigma=0\end{equation}
for all $\chi\in \ker\, (D_B)^*$. Then, 
 the boundary value problem
$$\left\{
\begin{array}{rll}
D\phi &=\psi  &\text{on\ } M,\\
(\Id - P_{B})R\phi&= (\Id - P_{B}){\rho}&\text{on\ } \Sigma
\end{array}
\right.
$$
has a solution $\phi\in H_1(M,\SS_M)$ that is unique up to elements of the kernel $\ker\, D_B$.
\end{corollary}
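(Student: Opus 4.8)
The plan is to deduce Corollary~\ref{bvp-H1} from Theorem~\ref{intro-bvp} by choosing an appropriate $\tilde\rho\in\dom\,D_\mathrm{max}$ and checking that the extra $H_1$-regularity is automatic. First I would set $\tilde\rho:=\mathcal E\rho$, where $\mathcal E$ is the extension map of Theorem~\ref{trace_theorem}; then $\tilde\rho\in H_1(M,\SS_M)\subset\dom\,D_\mathrm{max}$ and $R\tilde\rho=\rho$, so the boundary condition of Theorem~\ref{intro-bvp}, namely $(\Id-P_B)R\phi=(\Id-P_B)R\tilde\rho$, becomes exactly $(\Id-P_B)R\phi=(\Id-P_B)\rho$ as desired. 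I also need a projection $P_B\colon R(\dom\,D_\mathrm{max})\to B$; since we are given a projection $P_B\colon H_{\frac12}(\Sigma,\SS_M|_\Sigma)\to B$ and $B\subset H_{\frac12}\subset R(\dom\,D_\mathrm{max})$, I would extend it arbitrarily to a linear projection on all of $R(\dom\,D_\mathrm{max})$ (e.g.\ using Lemma~\ref{R_banach} to pick a closed complement of $B$ in $\check R$ and mapping it to $0$); the precise extension is irrelevant because only $(\Id-P_B)\rho$ with $\rho\in H_{\frac12}$ enters.

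Next I would translate the solvability hypothesis. Theorem~\ref{intro-bvp} requires $\psi-D\tilde\rho\in(\ker(D_B)^*)^\perp$, i.e.\ $(\psi-D\tilde\rho,\chi)=0$ for all $\chi\in\ker(D_B)^*$. Since $\tilde\rho=\mathcal E\rho\in H_1$ and $\chi\in\dom\,(D_B)^*\subset\dom\,D_\mathrm{max}$, I can apply Lemma~\ref{againextequ} (Equality~\eqref{L2-structure_mod_boundary} for $\phi\in\dom\,D_\mathrm{max}$ and $\psi\in H_1$) to get
\[
(D\tilde\rho,\chi)-(\tilde\rho,D\chi)=-\int_\Sigma\langle\nu\cdot R\tilde\rho,R\chi\rangle\,ds=-(\nu\cdot\rho,R\chi)_\Sigma.
\]
Here I must be slightly careful: $\chi\in\ker(D_B)^*$ means $D\chi=(D_B)^*\chi=0$ (as $\chi$ is in the kernel), so $(\tilde\rho,D\chi)=0$ and $(D\tilde\rho,\chi)=-(\nu\cdot\rho,R\chi)_\Sigma$. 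Therefore $(\psi-D\tilde\rho,\chi)=(\psi,\chi)+(\nu\cdot\rho,R\chi)_\Sigma$, which vanishes for all $\chi\in\ker(D_B)^*$ precisely by hypothesis~\eqref{int_cond}. So Theorem~\ref{intro-bvp} applies and yields a solution $\phi\in\dom\,D_\mathrm{max}$, unique up to $\ker\,D_B$.

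Finally I would upgrade $\phi$ to $H_1(M,\SS_M)$. Since $\phi$ solves the boundary value problem with $B\subset H_{\frac12}(\Sigma,\SS_M|_\Sigma)$, we have $R\phi\in B\subset H_{\frac12}$: indeed $\phi\in\dom\,D_B$ means $R\phi\in B$. Then Lemma~\ref{H1_dommax} — which says $H_1(M,\SS_M)=\{\phi\in\dom\,D_\mathrm{max}\mid R\phi\in H_{\frac12}(\Sigma,\SS_M|_\Sigma)\}$ — immediately gives $\phi\in H_1(M,\SS_M)$. The uniqueness statement is inherited verbatim from Theorem~\ref{intro-bvp}, and since $\ker\,D_B\subset\dom\,D_B\subset H_1$ by the same Lemma, the ambiguity indeed lives inside $H_1$. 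I expect the only genuinely delicate point to be the bookkeeping around $\ker(D_B)^*$ in the second paragraph — making sure that "$\chi\in\ker(D_B)^*$" is read as $\chi\in\dom\,(D_B)^*$ with $(D_B)^*\chi=0$, so that the integration-by-parts identity of Lemma~\ref{againextequ} can legitimately be applied to $\tilde\rho\in H_1$ and $\chi\in\dom\,D_\mathrm{max}$; everything else is a routine substitution.
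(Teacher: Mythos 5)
Your route is essentially the paper's: take $\tilde\rho=\mathcal{E}\rho\in H_1(M,\SS_M)$, use Lemma \ref{againextequ} (with $\tilde\rho$ in the $H_1$-slot and $\chi\in\ker\,(D_B)^*\subset\dom\,D_{\mathrm{max}}$, $D\chi=0$) to turn \eqref{int_cond} into $\psi-D\tilde\rho\in(\ker\,(D_B)^*)^\perp$, and then solve using coercivity plus the Closed Range Theorem. The paper simply re-runs the proof of Theorem \ref{intro-bvp} instead of quoting it as a black box, which is why it never needs to extend $P_B$ beyond $H_{\frac{1}{2}}$; your extension via Lemma \ref{R_banach} is harmless, since any projection onto $B$ encodes the same condition, namely $R\phi-\rho\in B$.

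The one step that is wrong as written is your regularity upgrade: you assert that the solution $\phi$ produced by Theorem \ref{intro-bvp} lies in $\dom\,D_B$, hence $R\phi\in B$. That is false in general: the boundary condition only gives $(\Id-P_B)(R\phi-\rho)=0$, and since $P_B$ has range in $B$ and restricts to the identity on $B$, this says $R\phi-\rho\in B$; one gets $R\phi\in B$ only if $\rho\in B$. The conclusion survives with a one-line repair: $R\phi\in\rho+B\subset H_{\frac{1}{2}}(\Sigma,\SS_M|_\Sigma)$, so Lemma \ref{H1_dommax} still yields $\phi\in H_1(M,\SS_M)$. Equivalently, and this is how the paper arranges it, obtain $\hat\phi\in\dom\,D_B$ with $D\hat\phi=\psi-D\tilde\rho$ from the Closed Range Theorem, observe $\dom\,D_B\subset H_1(M,\SS_M)$ by Lemma \ref{H1_dommax} (using $B\subset H_{\frac{1}{2}}$), and set $\phi=\hat\phi+\tilde\rho$, which is manifestly in $H_1$; the uniqueness up to $\ker\,D_B$ is unaffected.
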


\begin{proof} By Lemma \ref{H1_dommax}, $B\subset H_{\frac{1}{2}}(\Sigma, \SS_M|_\Sigma)$ implies $\dom\, D_B\subset H_1(M,\SS_M)$. We set 
$\tilde{\rho}=\mathcal{E}\rho$. By the Trace Theorem \ref{trace_theorem}, 
$\tilde{\rho}\in H_1(M,\SS_M)$. Moreover, by Lemma \ref{againextequ} the integrability condition \eqref{int_cond} 
implies that $\psi-D\tilde{\rho}\in (\ker\, (D_{B})^*)^\perp$. Hence, together with the Closed Range Theorem there is $\hat{\phi}\in \dom\, D_B\subset H_1(M,\SS_M)$ with $D\hat{\phi}= \psi-D\tilde{\rho}$. Thus, as in the proof of Theorem \ref{intro-bvp} $\phi=\hat{\phi}+\tilde{\rho}$ gives a solution which is now in $H_1(M,\SS_M)$. 
\end{proof}

\begin{remark}In order to give a full generalization of the theory given
in \cite{baer_ballmann_11} it would be interesting to examine the following questions:\\
- Consider general boundary conditions, in particular we would like to identify the image of the extended trace map in Theorem \ref{extended-trace}.\\
- Give a generalization of the definition for elliptic boundary conditions for noncompact boundaries (of bounded geometry) and study them.\\
- Consider, more generally, complete Dirac-type operators as in \cite{baer_ballmann_11}.
\end{remark}

\section{On the boundary condition $B_\pm$}\label{boundcond_B+-}
In this section,  we briefly recall and give some basic facts on $P_\pm$. Some of them can be found in \cite[Section 6]{HMZ02}. Moreover, we prove the claims of Example \ref{ex_bd_cond}.iii. 

\begin{lem}\label{lemmaonP+-} Let $P_\pm\colon L^2(\Sigma, \SS_M|_\Sigma)\to  L^2(\Sigma, \SS_M|_\Sigma)$ be the
 map $\phi\mapsto \frac{1}{2}(\phi \pm \i\nu \cdot \phi)$ and consider $B_\pm:= \{ \phi\in H_{\frac{1}{2}}(\Sigma, \SS_M|_\Sigma ) \ |\ P_\pm\phi=0\}$. Then,  the following hold
\begin{itemize}
 \item[(i)] $P_\pm$ are self-adjoint projections, orthogonal to each other and $\nu P_\pm=P_\pm \nu= \mp \i P_\pm$.
 \item[(ii)] For all $s\in \mathbb{R}$, $P_\pm(\phi)= \frac{1}{2}(\phi \pm i\nu \cdot \phi)$ gives an operator from
 $H_s(\Sigma, \SS_M|_\Sigma)$ to itself such that for all $\phi\in H_s(\Sigma, \SS_M|_\Sigma)$ and 
$\psi\in H_{-s}(\Sigma, \SS_M|_\Sigma)$ we have $(P_+\phi,P_-\psi)_\Sigma=0$ and
 $(P_\pm \phi, \psi)_\Sigma= (\phi, P_\pm \psi)_\Sigma$.
  \item[(iii)] $\widetilde{D}^{\Sigma}P_{\pm}=P_{\mp} \widetilde{D}^{\Sigma}.$
 \item[(iv)] $D_\pm$ (see Example \ref{ex_bd_cond}.iii for the definition) is a closed extension of $D_{cc}$. 
 \item[(v)] $D_\pm= D_{B_\pm}$.
   \item[(vi)] $(D_{B_\pm})^*=D_{B_\mp}$.
 \item[(vii)] Let each connected component of $M$ have a non-empty boundary. Then, $\ker D_{B_\pm} =\{0\}$.
 \end{itemize}

\end{lem}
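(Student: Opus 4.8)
The plan is to establish (i)--(vii) in the stated order, as (v)--(vii) rest on the earlier items.

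Items (i)--(iii) are purely algebraic. Since $|\nu|=1$, Clifford multiplication by the inner unit normal is skew-Hermitian and satisfies $\nu\cdot\nu\cdot=-\Id$; hence $\i\nu\cdot$ is a self-adjoint involution on $L^2(\Sigma,\SS_M|_\Sigma)$ and $P_\pm=\tfrac12(\Id\pm\i\nu\cdot)$ are its two spectral projections, which gives at once that they are self-adjoint, idempotent, mutually orthogonal and sum to $\Id$; the relation $\nu\cdot P_\pm=P_\pm\,\nu\cdot=\mp\i P_\pm$ is the one-line computation $\nu\cdot\tfrac12(\Id\pm\i\nu\cdot)=\tfrac12(\nu\cdot\mp\i)=\mp\i P_\pm$. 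For (ii), $\nu\cdot$ is multiplication by a uniformly $C^\infty$-bounded bundle endomorphism (here the bounded geometry of $(M,\Sigma)$ is used), hence bounded on $H_k(\Sigma,\SS_M|_\Sigma)$ for $k\in\mathbb{N}_0$ by the Leibniz rule and Remark~\ref{rem_app}.i, bounded on $H_{-k}$ by duality through the perfect pairing of Lemma~\ref{pairing-Sobolev}, and bounded on all $H_s$ by Riesz--Thorin interpolation -- exactly as was done for the spectral projections $\pi_I$ in this section; the two pairing identities then follow from (i) by density of $\Gamma_c^\infty$ together with Lemma~\ref{pairing-Sobolev}. For (iii), on $\Gamma_c^\infty$ one computes, using the anticommutation relation~\eqref{d1},
\[\widetilde D^{\Sigma}P_\pm\varphi=\tfrac12\big(\widetilde D^{\Sigma}\varphi\pm\i\,\widetilde D^{\Sigma}(\nu\cdot\varphi)\big)=\tfrac12\big(\widetilde D^{\Sigma}\varphi\mp\i\,\nu\cdot\widetilde D^{\Sigma}\varphi\big)=P_\mp\widetilde D^{\Sigma}\varphi,\]
and this extends to $H_s$ because both sides are continuous $H_s\to H_{s-1}$ by (ii) and Remark~\ref{rem_app}.iii.

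For (iv): $\Gamma_{cc}^\infty(M,\SS_M)\subset\dom\,D_\pm$ since such spinors restrict to $0$ on $\Sigma$, so $D_\pm$ extends $D_{cc}$; and $D_\pm$ is closed because $\dom\,D_\pm=(P_\pm R)^{-1}(\{0\})$ is the preimage of a closed set under the bounded map $P_\pm R\colon(\dom\,D_{\mathrm{max}},\Vert.\Vert_D)\to H_{-\frac12}(\Sigma,\SS_M|_\Sigma)$ (bounded by Theorem~\ref{extended-trace} and (ii)), hence closed in $\dom\,D_{\mathrm{max}}$, and a restriction of the closed operator $D_{\mathrm{max}}$ to a graph-closed subspace of its domain is closed. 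For (v), $\dom\,D_{B_\pm}\subset\dom\,D_\pm$ is trivial because $B_\pm\subset\{\psi:P_\pm\psi=0\}$; for the converse, given $\phi\in\dom\,D_\pm$ it suffices by Lemma~\ref{H1_dommax} to show $R\phi\in H_{\frac12}(\Sigma,\SS_M|_\Sigma)$. From $P_\pm R\phi=0$ one gets $R\phi=\mp\i\,\nu\cdot R\phi$, and $\Vert\nu\cdot R\phi\Vert_{\hat R}=\Vert R\phi\Vert_{\check R}<\infty$ by Remark~\ref{rem_norms}.ii, so $R\phi\in\check R\cap\hat R$. It therefore remains to prove $\check R\cap\hat R\subset H_{\frac12}(\Sigma,\SS_M|_\Sigma)$. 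Using Remark~\ref{rem_norms}, the $\check R$- and $\hat R$-norms of $\psi:=R\phi$ are equivalent to $\big(\sum_\gamma\Vert R(h_\gamma\phi)\Vert_{\check{H}(\hat{U}'_\gamma)}^2\big)^{1/2}$ and $\big(\sum_\gamma\Vert R(h_\gamma\phi)\Vert_{\hat{H}(\hat{U}'_\gamma)}^2\big)^{1/2}$, where $\check H,\hat H$ are the B\"ar--Ballmann norms of~\eqref{H_spaces} on the closed boundary pieces $\hat{U}'_\gamma$; splitting the $H_{\frac12}$-norm along the spectral decomposition of $\widetilde D^{\hat{U}'_\gamma}$ gives $\Vert\chi\Vert_{H_{\frac12}(\hat{U}'_\gamma)}^2\le C\big(\Vert\chi\Vert_{\check{H}(\hat{U}'_\gamma)}^2+\Vert\chi\Vert_{\hat{H}(\hat{U}'_\gamma)}^2\big)$ with $C$ uniform in $\gamma$, and summing over $\gamma$ together with the partition-of-unity description of $H_{\frac12}(\Sigma)$ yields $\Vert\psi\Vert_{H_{\frac12}(\Sigma)}^2\le C'\big(\Vert\psi\Vert_{\check R}^2+\Vert\psi\Vert_{\hat R}^2\big)<\infty$. (Alternatively, one may localize the closed-boundary identity $D_\pm=D_{B_\pm}$ of \cite{HMZ02} to the pieces $\hat U_\gamma$ and patch.)

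For (vi): if $\phi\in\dom\,D_{B_\pm}$ and $\psi\in\dom\,D_{B_\mp}$ then both lie in $H_1(M,\SS_M)$ by (v) and Lemma~\ref{H1_dommax}, so Equality~\eqref{L2-structure_mod_boundary} (valid on $H_1$ by Lemma~\ref{extequ}) gives $(D\psi,\phi)-(\psi,D\phi)=-\int_\Sigma\langle\nu\cdot R\psi,R\phi\rangle\,ds$, and the integrand vanishes pointwise because $R\psi=P_\pm R\psi$, $R\phi=P_\mp R\phi$, $\nu\cdot P_\pm=\mp\i P_\pm$ and $P_\pm P_\mp=0$; hence $D_{B_\mp}\subset(D_{B_\pm})^*$. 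Conversely, $(D_{B_\pm})^*\subset(D_{cc})^*=D_{\mathrm{max}}$, so any $\psi\in\dom\,(D_{B_\pm})^*$ lies in $\dom\,D_{\mathrm{max}}$ and satisfies $(D\phi,\psi)-(\phi,D\psi)=0$ for all $\phi\in\dom\,D_{B_\pm}$; using $R(\dom\,D_{B_\pm})=B_\pm=P_\mp\big(H_{\frac12}(\Sigma,\SS_M|_\Sigma)\big)$, Lemma~\ref{againextequ}, the self-adjointness of $P_\mp$ from (ii) and the perfect pairing of Lemma~\ref{pairing-Sobolev}, this forces $P_\mp(\nu\cdot R\psi)=0$, hence $P_\mp R\psi=0$ (as $\nu\cdot$ commutes with $P_\mp$ and squares to $-\Id$), so $\psi\in\dom\,D_\mp=\dom\,D_{B_\mp}$ by (v). For (vii), let $\phi\in\ker\,D_{B_\pm}$; by (v) $\phi\in H_1(M,\SS_M)$, and~\eqref{L2-structure_mod_boundary} with $\psi=\varphi=\phi$ and $D\phi=0$ gives $\int_\Sigma\langle\nu\cdot R\phi,R\phi\rangle\,ds=0$; but $P_\pm R\phi=0$ forces $\nu\cdot R\phi=\pm\i R\phi$, so $\langle\nu\cdot R\phi,R\phi\rangle=\pm\i|R\phi|^2$ and hence $\int_\Sigma|R\phi|^2\,ds=0$, i.e. $R\phi=0$; then $\phi\in\dom\,D_{\mathrm{min}}$ by Lemma~\ref{norm_equ_0} with $D\phi=0$, and on the collar $U_\Sigma=[0,r_\partial)\times\Sigma$ from Definition~\ref{bdd_geo}(iii) the equation $D\phi=0$ is a linear first-order evolution equation in the normal variable with vanishing trace at $t=0$, so $\phi\equiv0$ on $U_\Sigma$ by uniqueness, and since $\phi$ is smooth in the interior by elliptic regularity and $D$ has the weak unique continuation property, $\phi$ vanishes on each connected component of $M$ -- every one of which meets $U_\Sigma$ by hypothesis.

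I expect (v) to be the crux: extracting $R\phi\in H_{\frac12}$ from merely $R\phi\in\check R$ and $P_\pm R\phi=0$ -- equivalently, the inclusion $\check R\cap\hat R\subset H_{\frac12}(\Sigma,\SS_M|_\Sigma)$ -- forces one back to the explicit localized (B\"ar--Ballmann) description of the $\check R$- and $\hat R$-norms near the boundary, whereas everything else is either elementary Clifford algebra (i)--(iv) or Green's formula combined with (v) and a standard unique-continuation argument (vi)--(vii).
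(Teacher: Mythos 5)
Items (i)--(iii) are the same elementary computations as in the paper, your (iv) is a correct (and slightly simpler) alternative to the paper's argument via Lemma \ref{Bmax} -- you observe that $\dom\, D_\pm=\ker\bigl(P_\pm R\colon (\dom\, D_{\mathrm{max}},\Vert.\Vert_D)\to H_{-\frac12}(\Sigma,\SS_M|_\Sigma)\bigr)$ is graph-closed -- and (vi), (vii) follow the paper's proofs (in (vii) note that ``uniqueness for the evolution equation in the normal variable'' is not an ODE-type triviality but exactly the unique continuation statement which the paper imports from \cite{BBL}). The crux is (v), where you take a genuinely different route from the paper, and there your argument has a real gap. You reduce (v) to the inclusion $\check{R}\cap\hat{R}\subset H_{\frac12}(\Sigma,\SS_M|_\Sigma)$. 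But, as defined in Lemma \ref{R_banach} and Remark \ref{rem_norms}, $\check{R}$ and $\hat{R}$ are the \emph{same} underlying set $R(\dom\, D_{\mathrm{max}})$ carrying two norms, so your inclusion literally asserts $R(\dom\, D_{\mathrm{max}})\subset H_{\frac12}$, which by Lemma \ref{H1_dommax} would give $\dom\, D_{\mathrm{max}}=H_1(M,\SS_M)$, contradicting Example \ref{ex_bd_cond}.ii whenever the boundary is nonempty. Quantitatively, your estimate $\Vert\psi\Vert_{H_{\frac12}(\Sigma)}^2\le C'(\Vert\psi\Vert_{\check{R}}^2+\Vert\psi\Vert_{\hat{R}}^2)$, combined with Proposition \ref{workaround_image} and the bound of Remark \ref{rem_norms}.ii that you invoke, would yield $\Vert R\phi\Vert_{H_{\frac12}(\Sigma)}\le C\Vert\phi\Vert_D$ for all $\phi\in\Gamma_c^\infty(M,\SS_M)$; this fails already on a half-cylinder $[0,\infty)\times N$ with $N$ closed, where cut-offs of $e^{-\lambda t}\varphi_\lambda$ built from boundary eigenspinors of eigenvalue $\lambda\to\infty$ have graph norm of order $\lambda^{-\frac12}$ but trace of $H_{\frac12}$-norm of order $\lambda^{\frac12}$. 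So ``finite $\hat{R}$-norm'' cannot serve as the nontrivial membership condition your argument needs, and the step as written proves too much.

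The repair is to run your idea on the localized norms instead: your hypothesis $P_\pm R\phi=0$ gives $R(h_\gamma\phi)=\mp\i\,\nu\cdot R(h_\gamma\phi)$, and since $\nu\cdot$ interchanges the local $\check{H}(\hat{U}'_\gamma)$- and $\hat{H}(\hat{U}'_\gamma)$-norms, one gets $\Vert R\phi\Vert_{\hat{H}_\gamma}\approx\Vert R\phi\Vert_{\check{H}_\gamma}\approx\Vert R\phi\Vert_{\check{R}}<\infty$; only then does your (correct) local estimate $\Vert\chi\Vert_{H_{\frac12}}^2\lesssim\Vert\chi\Vert_{\check{H}}^2+\Vert\chi\Vert_{\hat{H}}^2$, summed over $\gamma$, give $R\phi\in H_{\frac12}$ and hence $\phi\in H_1(M,\SS_M)$ by Lemma \ref{H1_dommax}. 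Even then you must justify extending the norm equivalences of Remark \ref{rem_norms} from traces of smooth compactly supported spinors to general elements of $R(\dom\, D_{\mathrm{max}})$ (a density and lower-semicontinuity argument you do not address), and your parenthetical alternative (``localize the closed-boundary identity of \cite{HMZ02} and patch'') is too vague to count as a proof. For comparison, the paper avoids the trace space altogether: it approximates $\phi\in\dom\, D_\pm$ in the graph norm by $\psi_i=\phi_i-\mathcal{E}P_\pm R\phi_i\in\dom\, D_{B_\pm}$ with $\phi_i$ smooth, observes that $\int_\Sigma\langle R\psi_i,\widetilde{D}^{\Sigma}R\psi_i\rangle\,ds=0$ by (i) and (iii), and uses the Lichnerowicz identity \eqref{Lich} to show $(\psi_i)$ is Cauchy in $H_1$, so that $\phi\in H_1(M,\SS_M)$ directly.
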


\begin{proof}
Assertions (i) and (ii) follow directly by simple calculations, and (iii) follows directly from \eqref{d1}.   For (iv) we have by definition of $D_\pm$ (see Example \ref{ex_bd_cond}.iii) that
$D_\pm=D_{\tilde{B}_\pm}$ where $\tilde{B}_\pm=\{\phi\in R(\dom\, D_{\mathrm{max}}) \ |\ P_\pm\phi=0\}$. 

In order to show the closedness of $D_\pm$ we want to apply Lemma \ref{Bmax}. For that, we have to show that $\tilde{B}_\pm$ is closed in $\check{R}$: Let $\phi_i\in \tilde{B}_\pm$ with $\phi_i\to \phi$ in
 $\check{R}$. 
 Then, we get together with Remark \ref{rem_norms}.ii that
\begin{align*}\Vert P_\pm \phi\Vert_{\check{R}}=& \Vert P_\pm (\phi-\phi_i)\Vert_{\check{R}}=\Vert\mathcal{E} P_\pm (\phi-\phi_i)\Vert_{D}\leq \frac{1}{2}\left( \Vert\mathcal{E} (\phi-\phi_i)\Vert_{D} +\Vert\mathcal{E} \nu\cdot (\phi-\phi_i)\Vert_{D}\right)\\
\leq& C\Vert \mathcal{E} (\phi-\phi_i)\Vert_{D} = \Vert \phi-\phi_i\Vert_{\check{R}}\to 0.
\end{align*}

Hence, $P_\pm \phi=0$ and $\phi\in \tilde{B}_\pm.$

For (v), we have clearly that $\dom\, D_{B_\pm}\subset \dom\, D_\pm$. It remains to show that
 any $\phi\in \dom\, D_\pm$ is already in $H_1(M, \SS_M)$. By  Lemma \ref{dense_in_graph_norm}, 
 there is a sequence $\phi_i\in \Gamma_c^\infty(M,\SS_M)$ with $\phi_i\to \phi$ in the graph norm. 
Consider $\mathcal{E}P_\pm R\phi_i$. By the linearity of $\mathcal{E}$, \eqref{ER_ext} and Remark \ref{rem_norms}.ii  we get
\begin{align*} \Vert \mathcal{E}P_\pm R \phi_i\Vert_D=& \Vert \mathcal{E}P_\pm R(\phi_i-\phi)\Vert_D\\
\leq& \frac{1}{2}\left(\Vert \mathcal{E} R(\phi_i-\phi)\Vert_D + \Vert \mathcal{E}(\nu\cdot R(\phi_i-\phi)\Vert_D \right))\leq C \Vert \phi_i-\phi\Vert_D\to 0.
\end{align*}
 Hence, $\psi_i:=\phi_i - \mathcal{E}P_\pm R \phi_i \to \phi$ in the graph norm. Since $\psi_i \in \dom\, D_{B_\pm}$, this implies 
that $\dom\, D_{B_\pm}$ is dense in $\dom\, D_\pm$. Moreover, note that with (iii) and (i) we have 
$$\int_\Sigma \langle R\psi_i, \widetilde{D}^{\Sigma}R\psi_i\rangle ds= \int_\Sigma \langle P_\mp R\psi_i, \widetilde{D}^{\Sigma}P_\mp R\psi_i\rangle
 ds=\int_\Sigma \langle P_\mp R\psi_i, P_\pm \widetilde{D}^{\Sigma} R\psi_i\rangle ds=0.$$

Hence, together with the Lichnerowicz formula in Lemma \ref{extequ}, the bounded geometry, (i) and Lemma \ref{extequ} we get
\begin{align*}
\Vert \psi_i-\psi_j\Vert_{H_1}^2=&\Vert \psi_i-\psi_j\Vert_D^2-\frac{1}{4}\int_M\langle (\s^M+2\i\Omega\cdot)(\psi_i-\psi_j),(\psi_i-\psi_j)\rangle dv\\
&-\frac{n}{2}\int_\Sigma H|R(\psi_i-\psi_j)|^2 ds\\
 \leq&  C\Vert \psi_i-\psi_j\Vert_D^2 \mp \i \frac{n}{2}\int_\Sigma <\nu\cdot R(\psi_i-\psi_j), H  R(\psi_i-\psi_j)>\\
 \leq&  C\Vert \psi_i-\psi_j\Vert_D^2.
\end{align*}
Thus, $\psi_i$ is even a Cauchy sequence in $H_1$ which implies that $\phi$ is already in $H_1(M,\SS_M)$. 
Note that this implies in particular that $B_\pm=\tilde{B}_\pm$. 
For (vi), 
the domain of the adjoint is defined by
\[\dom\, (D_{+})^*=\{\theta\in L^2(M,\SS_M)\ |\ \exists \chi\in L^2(M,\SS_M)\, \forall \psi\in \dom\, D_{+}: 
(\chi, \psi)=(\eta, D\psi) \}.\]
Since, $\Gamma_{cc}^\infty(M,\SS_M) \subset  \dom\, D_{+}$, we get  
$\dom\, (D_{+})^* \subset \dom\, D_{\mathrm{max}}$. Thus,
$$\dom\, (D_{+})^*=\{\theta\in \dom\, D_{\mathrm{max}}\ |\ \forall \psi\in \dom\, D_{+}: 
(D\theta, \psi)=(\theta, D\psi) \}.$$
Due to Lemma \ref{againextequ}, the definition of $\dom\, D_{+}$ and (v), we get
\begin{align*}
 \dom\, (D_{+})^*=&\Big\{\theta\in \dom\, D_{\mathrm{max}}\ |\ \forall \psi\in H_1(M,\SS_M): 
\int_\Sigma \langle \nu\cdot R\theta,  P_-R\psi\rangle ds=0 \Big\}.\end{align*}
By (i) and (ii), we have
\[-\int_\Sigma \langle R\theta, \nu\cdot P_-R\psi\rangle ds=\i\int_\Sigma \langle R\theta, P_-R\psi\rangle ds=\i\int_\Sigma \langle P_-R\theta, R\psi\rangle ds\] and $P_-R\theta\in H_{-\frac12}(\Sigma, \SS_M|_\Sigma)$.
Hence, together with Lemma \ref{H1_dommax} and Lemma \ref{pairing-Sobolev}, 
\begin{align*}
\dom\, (D_{+})^*=&\Big\{\theta\in \dom\, D_{\mathrm{max}}\ |\ \forall \hat{\psi}\in H_{\frac{1}{2}}(\Sigma,\SS_M|_\Sigma): 
\int_\Sigma \langle P_-R\theta, \hat{\psi}\rangle ds=0 \Big\}\\
=&\{\theta\in \dom\, D_{\mathrm{max}}\ |\  P_-R\theta =0 \}=\dom\ D_-.
\end{align*}
The assertion (vii)  is proven as in the closed case \cite[Proof of Corollary 6]{HMZ02}:  Let $\phi\in \ker D_{\pm}$, i.e. $\phi\in\dom\, D_{\mathrm{max}}$, $D\phi=0$ on $M$, and $P_\pm R\phi=0$ on $\Sigma$. Using this, \eqref{L2-structure_mod_boundary}, Lemma \ref{againextequ} 
and (i), we compute
\begin{align*}
 0&=\int_M \langle \phi, \i D\phi\rangle dv - \int_M \langle D\phi, \i\phi\rangle dv  =\int_\Sigma \langle \nu \cdot R\phi, \i R\phi\rangle ds  \\
 &=\int_\Sigma \langle \nu \cdot P_\mp R\phi, \i P_\mp R\phi\rangle ds=\pm \int_\Sigma |R \phi|^2 ds.
\end{align*}
Hence, $R\phi=0$ and $\phi\in \dom\, D_{\mathrm{min}}$, cf. Lemma \ref{norm_equ_0}. But due to the strong unique continuation property of the Dirac operator \cite[Section 1.2]{BBL}, $D_{\mathrm{min}}\phi=0$ implies $\phi=0$.
\end{proof}

\section{Examples and the coercivity condition}\label{section_coer}

In Definition \ref{coer}, we defined when an operator
 $D_B$ is $(\mathrm{dom}\, D_B)$-coercive at infinity. When working with $B$, we will also use the 
short version -- $B$-coercive at infinity. In this passage, we will compare this notion with the one of coercivity at infinity given in \cite[Definition 8.2]{baer_ballmann_11} as cited below and give some examples.

\begin{definition}\label{coerbb}
\cite[Definition 8.2]{baer_ballmann_11}
 $D\colon \dom\, D_{\mathrm{max}} \subset L^2(M,\SS_M)\to L^2(M, \SS_M)$ is coercive at infinity  if there is a compact subset $K\subset M$ and 
 a constant $c>0$ such that $$\Vert D\phi\Vert_{L^2}
 \geq c\Vert \phi\Vert_{L^2},$$ for all $\phi\in \Gamma_c^\infty (M\setminus K, \SS_M)$.
\end{definition}

By \cite[Lemma 8.4]{baer_ballmann_11}, $D$ is coercive at infinity for a closed boundary
 $\Sigma$ if and only if there is a compact subset $K\subset M$ and a constant $c>0$ such that for all
 $\phi\in \Gamma_{cc}^\infty(M\setminus K, \SS_M)$ we have $\Vert D\phi\Vert_{L^2}
 \geq c\Vert \phi\Vert_{L^2}$. For noncompact boundaries, just the 'only if'-direction survives 
since in contrast to closed boundaries there is no compact $K$ such that $\Gamma_c^\infty(M\setminus K,\SS_M)\subset \Gamma_{cc}^\infty(M,\SS_M)$.\\

Before we compare those different coercivity conditions we give some examples:

\begin{example}
 \begin{itemize}
  \item[(i)] By the unique continuation property, the kernel of $D_\mathrm{min}$ is trivial.  Thus, 
together with Lemma \ref{norm_equ_0},  we have that $D$ is $(B=0)$--coercive at infinity if 
and only if there is a constant $c>0$ such that for all $\phi\in \Gamma_{cc}^\infty(M,\SS_M)$
  $$\Vert D\phi\Vert_{L^2} \geq c\Vert \phi\Vert_{L^2}.$$ 
For closed boundaries, this implies coercivity at infinity by  \cite[Lemma 8.4]{baer_ballmann_11} which was cited above. We will see that for closed boundaries also the converse is true, cf. Corollary \ref{cor_coer_closed}.
\item[(ii)] By Lemma \ref{lemmaonP+-}, $\ker D_{B_\pm}=\{0\}$. Thus, $D$ is $B_\pm$-coercive
 at infinity if and only if there is a constant $c>0$ such that $$\Vert D\psi\Vert_{L^2}\geq c\Vert \psi\Vert_{L^2}$$
 for all $\psi\in H_1(M,\SS_M)$ with $P_\pm R\psi =0$. In particular, this implies $(B=0)$-coercivity at infinity. More generally, if $B_1\subset B_2$ and $\ker D_{B_1}=\ker D_{B_2}$, then $B_2$-coercivity at infinity implies $B_1$-coercivity at infinity.
 \end{itemize}
\end{example}

\begin{lemma}\label{equiv_coer_easydir}
 Let $D$ be coercive at infinity, and let $B$ be a boundary condition. Assume that 
$\dom\, D_B\cap (\ker D_B)^\perp\subset H_1(M,\SS_M)$ and that  the $H_1$-norm and the graph norm are
 equivalent on $\dom\, D_B\cap (\ker D_B)^\perp$. Then, $D$ is $B$-coercive at infinity.
\end{lemma}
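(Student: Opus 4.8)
The goal is to establish $B$-coercivity at infinity, i.e. to find a constant $c>0$ such that $\Vert D\phi\Vert_{L^2}\geq c\Vert\phi\Vert_{L^2}$ for all $\phi\in\dom\, D_B\cap(\ker D_B)^\perp$. The hypotheses are that $D$ is coercive at infinity in the sense of Definition \ref{coerbb} (a uniform lower bound for $\phi$ supported away from a fixed compact set $K$), that $\dom\, D_B\cap(\ker D_B)^\perp$ sits inside $H_1$, and that the $H_1$-norm and the graph norm are equivalent on this space. I would argue by contradiction: suppose no such $c$ exists, so there is a sequence $\phi_i\in\dom\, D_B\cap(\ker D_B)^\perp$ with $\Vert\phi_i\Vert_{L^2}=1$ and $\Vert D\phi_i\Vert_{L^2}\to 0$. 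By the norm equivalence, $(\phi_i)$ is bounded in $H_1$, hence (after passing to a subsequence) converges weakly in $H_1$ to some $\phi$, and by Rellich-type local compactness converges strongly in $L^2_{\rm loc}$.

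First I would show the weak limit $\phi$ lies in $\ker D_B$. Weak convergence in $H_1$ together with $\Vert D\phi_i\Vert_{L^2}\to 0$ gives $D\phi=0$; the boundary condition $B$ is closed in $\check R$, and $R\colon\dom\, D_{\rm max}\to\check R$ is continuous, while $\dom\, D_B\cap(\ker D_B)^\perp$ is a closed subspace of the Hilbert space $(\dom\, D_B,\Vert.\Vert_D)$ (weak limits stay in $\ker D_B$'s orthogonal complement since $\ker D_B$ is finite-dimensional or at least closed, and $\phi_i\rightharpoonup\phi$ in $L^2$ forces $\phi\perp\ker D_B$). Combining $\phi\in\dom\, D_B$, $D\phi=0$, and $\phi\in(\ker D_B)^\perp$ forces $\phi=0$.

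Next I would use the coercivity at infinity of $D$ to recover a contradiction with $\Vert\phi_i\Vert_{L^2}=1$. Fix the compact $K$ from Definition \ref{coerbb}, and choose a cutoff $\chi\in\Gamma_c^\infty(M)$ with $\chi\equiv 1$ on a neighbourhood of $K$. Write $\phi_i=\chi\phi_i+(1-\chi)\phi_i$. Since $\phi\equiv 0$ and the embedding $H_1\hookrightarrow L^2$ is compact on the relatively compact region $\{\chi\neq 0\}$, we get $\Vert\chi\phi_i\Vert_{L^2}\to 0$. For the far part, $(1-\chi)\phi_i$ is (a limit of) sections supported in $M\setminus K$, so Definition \ref{coerbb} applies:
\[
c\Vert(1-\chi)\phi_i\Vert_{L^2}\leq\Vert D((1-\chi)\phi_i)\Vert_{L^2}\leq\Vert D\phi_i\Vert_{L^2}+\Vert d\chi\cdot\phi_i\Vert_{L^2},
\]
and $\Vert d\chi\cdot\phi_i\Vert_{L^2}\leq(\sup|d\chi|)\Vert\phi_i\Vert_{L^2(\supp d\chi)}\to 0$ by the same local-compactness argument, since $\supp d\chi$ is relatively compact. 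Hence $\Vert(1-\chi)\phi_i\Vert_{L^2}\to 0$ as well, so $\Vert\phi_i\Vert_{L^2}\to 0$, contradicting $\Vert\phi_i\Vert_{L^2}=1$.

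The main obstacle is the bookkeeping needed to legitimately invoke Definition \ref{coerbb} on $(1-\chi)\phi_i$: that definition is stated for smooth compactly supported sections of $M\setminus K$, whereas $(1-\chi)\phi_i$ is only in $H_1$, so one must pass through a density argument, noting that $(1-\chi)\phi_i$ extends to an element of $H_1$ vanishing near $K$ and that Definition \ref{coerbb}'s inequality extends to the $H_1$-closure of $\Gamma_c^\infty(M\setminus K,\SS_M)$ by continuity of both sides in the $H_1$-norm (using \eqref{equ_H1D_easydir} for the $D$-side). One also needs to be slightly careful that the boundary is noncompact, so $\supp d\chi$ and $\supp\chi$ meeting the boundary is fine: local $L^2$-compactness still holds on relatively compact pieces by the usual Rellich theorem applied in the Fermi charts of Definition \ref{def_synch}. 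Once these points are handled, the contradiction is immediate.
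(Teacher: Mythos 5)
Your proof is correct and follows essentially the same route as the paper's: argue by contradiction, use the norm equivalence to get $H_1$-boundedness, show the weak limit vanishes and hence local $L^2$-norms go to zero, then cut off near the compact set $K$ and apply coercivity at infinity to the far part via a density argument. The only differences are cosmetic — your cutoff $1-\chi$ plays the role of the paper's $\eta$, and you justify the passage to non-smooth sections through $H_1$-density and \eqref{equ_H1D_easydir} instead of the graph-norm density of Lemma \ref{dense_in_graph_norm} — both of which are fine.
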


\begin{proof} Since $D$ is coercive at infinity, there is a compact subset $K\subset M$ and a constant $c>0$ such 
that $\Vert D\phi\Vert_{L^2}\geq c\Vert \phi\Vert_{L^2}$ for all $\phi\in \Gamma_c^\infty(M\setminus K, \SS_M)$. 
Assume that $D$ is not $B$-coercive at infinity. Then, there is a sequence
 $\phi_i\in \dom\, D_B\cap (\ker D_B)^\perp$ with $\Vert \phi_i\Vert_{L^2}=1$ and $\Vert D\phi_i\Vert_{L^2}\to 0$. By
 equivalence of the norms, $\phi_i$ is also bounded in $H_1$. This implies $\phi_i \to \phi$ weakly in $H_1$ and, thus, locally strongly in $L^2$. Moreover, $D\phi=0$. 
 Together with $\phi_i\perp \ker D_B$, this implies $\phi=0$. Thus, for 
each compact subset $K'\subset M$ we have $\int_{K'} |\phi_i|^2 dv\to 0$ as $i\to \infty$. Let 
$\eta\colon M \to [0,1]$ be a cut-off function and $K'$ be a compact subset such that 
$K\subset K' \subset M$ and $\eta=0$ on $K$, $\eta=1$  on $M\setminus K'$ and $|d \eta|\leq a$ for a constant $a>0$ big enough.  
Then, $\supp\, (\eta\phi_i)\subset M\setminus K$, $\Vert D(\eta \phi_i)\Vert_{L^2}\leq a\Vert \phi_i\Vert_{L^2(K')}+\Vert D\phi_i\Vert_{L^2}\to 0$ 
and
 $$1\geq \Vert\eta \phi_i\Vert_{L^2}\geq \Vert \phi_i \Vert_{L^2}- \Vert (1-\eta)\phi_i
\Vert_{L^2}\geq 1-\Vert \phi_i\Vert_{L^2(K')}\to 1.$$ By Lemma \ref{dense_in_graph_norm}, we can choose a sequence $(\phi_{ij})_j\subset \Gamma_c^\infty(M,\SS_M)$ with $\phi_{ij}\to \phi_i$ in the graph norm as $j\to \infty$. Then, $\eta\phi_{ij}\to \eta \phi_i$ in the graph norm and $\supp\, (\eta\phi_{ij})\in M\setminus K$. Thus, we can find $j=j(i)$ such that $\Vert D(\eta \phi_{ij(i)})\Vert_{L^2}\to 0$ and $\Vert \eta \phi_{ij(i)}\Vert_{L^2}\to 1$ as $i\to \infty$. But this contradicts  the assumption that $D$ is coercive at infinity.
\end{proof}

From the last Lemma and Lemma \ref{equiv_H1_D} we obtain immediately

\begin{corollary}\label{easy_dir_cor} If $D$ is coercive at infinity and $B\subset H_{\frac{1}{2}}(\Sigma, \SS_M|_\Sigma)$, then $D$ is $B$-coercive at infinity.
\end{corollary}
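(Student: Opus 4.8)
The plan is to verify that the hypotheses of Lemma~\ref{equiv_coer_easydir} are met and then invoke it directly; the corollary is a formal combination of the two preceding lemmata. First I would observe that, since $B$ is a boundary condition with $B\subset H_{\frac{1}{2}}(\Sigma,\SS_M|_\Sigma)$, Lemma~\ref{H1_dommax} gives $\dom\, D_B\subset H_1(M,\SS_M)$: indeed every $\phi\in\dom\, D_B$ lies in $\dom\, D_{\mathrm{max}}$ and satisfies $R\phi\in B\subset H_{\frac{1}{2}}(\Sigma,\SS_M|_\Sigma)$, so by Lemma~\ref{H1_dommax} we have $\phi\in H_1(M,\SS_M)$. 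In particular the closed subspace $\dom\, D_B\cap(\ker D_B)^\perp$ is contained in $H_1(M,\SS_M)$.

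Next I would apply Lemma~\ref{equiv_H1_D}, which under exactly the same hypothesis $B\subset H_{\frac{1}{2}}(\Sigma,\SS_M|_\Sigma)$ asserts that the $H_1$-norm and the graph norm $\Vert.\Vert_D$ are equivalent on $\dom\, D_B$. Restricting to the subspace $\dom\, D_B\cap(\ker D_B)^\perp\subset\dom\, D_B$, the two norms remain equivalent there.

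With these two facts in hand, all the hypotheses of Lemma~\ref{equiv_coer_easydir} hold: $D$ is coercive at infinity by assumption, $\dom\, D_B\cap(\ker D_B)^\perp\subset H_1(M,\SS_M)$, and the $H_1$-norm and graph norm are equivalent on that subspace. Hence Lemma~\ref{equiv_coer_easydir} yields that $D$ is $B$-coercive at infinity, which is the assertion of Corollary~\ref{easy_dir_cor}. There is essentially no obstacle in the argument; the only point worth spelling out is the inclusion $\dom\, D_B\subset H_1(M,\SS_M)$, which is precisely Lemma~\ref{H1_dommax} applied to the boundary condition $B$, and from there the two lemmata combine mechanically.
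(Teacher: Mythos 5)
Your argument is exactly the paper's: the corollary is obtained by combining Lemma \ref{equiv_coer_easydir} with Lemma \ref{equiv_H1_D} (and the inclusion $\dom\, D_B\subset H_1(M,\SS_M)$ from Lemma \ref{H1_dommax}), which is what you spell out. The proposal is correct and follows the same route as the paper, just with the routine verifications made explicit.
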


Next we give some (very restrictive) conditions that are sufficient to prove that $B$-coercivity at infinity implies coercivity at infinity. Those additional assumptions are needed to make sure that the $\phi_i$ appearing in Definition \ref{coerbb} are in $\dom\, D_B$.

\begin{lemma}\label{equiv_coer}Let $B$ be a boundary condition with $B\subset H_\frac{1}{2}(\Sigma, \SS_M|_\Sigma)$. Assume that there exists a compact subset $K'\subset M$ with $\Gamma_c^\infty(M\setminus K', \SS_M)\subset \dom\, D_B$.  
If $D\colon \mathrm{dom}\, D_B\subset L^2(\Sigma, \SS_M|_\Sigma) \to L^2(\Sigma, \SS_M|_\Sigma)$ has a finite
dimensional kernel  and $D$ is $B$-coercive at infinity, then $D$ is coercive at infinity.
\end{lemma}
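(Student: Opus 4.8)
The plan is to derive coercivity at infinity (Definition~\ref{coerbb}) directly from $B$-coercivity at infinity, the point being to enlarge the compact set so that the finite-dimensional kernel $\ker D_B$ becomes negligible on spinors supported far out. Let $c>0$ be the constant from $B$-coercivity, so that $\Vert D\phi\Vert_{L^2}\geq c\Vert\phi\Vert_{L^2}$ for all $\phi\in\dom\, D_B\cap(\ker D_B)^\perp$. I would fix an orthonormal basis $\chi_1,\dots,\chi_k$ of $\ker D_B$ and let $\pi$ denote the $L^2$-orthogonal projection onto it, $\pi\phi=\sum_{j=1}^k(\phi,\chi_j)\chi_j$. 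Choosing a compact exhaustion $M=\bigcup_i K_i$ with $K_1\supset K'$, dominated convergence gives $\sum_{j=1}^k\Vert\chi_j\Vert_{L^2(M\setminus K_i)}^2\to 0$, so I can fix $K:=K_i$ compact with $K\supset K'$ and $\sum_{j=1}^k\Vert\chi_j\Vert_{L^2(M\setminus K)}^2\leq\tfrac14$.

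Now take $\phi\in\Gamma_c^\infty(M\setminus K,\SS_M)$. Since $K\supset K'$, the hypothesis $\Gamma_c^\infty(M\setminus K',\SS_M)\subset\dom\, D_B$ gives $\phi\in\dom\, D_B$, and hence also $\phi-\pi\phi\in\dom\, D_B\cap(\ker D_B)^\perp$ with $D(\phi-\pi\phi)=D\phi$ because $\pi\phi\in\ker D_B$. As $\phi$ is supported in $M\setminus K$, Cauchy--Schwarz bounds $|(\phi,\chi_j)|=|(\phi,\chi_j)_{L^2(M\setminus K)}|\leq\Vert\phi\Vert_{L^2}\Vert\chi_j\Vert_{L^2(M\setminus K)}$, so
\[ \Vert\pi\phi\Vert_{L^2}^2=\sum_{j=1}^k|(\phi,\chi_j)|^2\leq\Vert\phi\Vert_{L^2}^2\sum_{j=1}^k\Vert\chi_j\Vert_{L^2(M\setminus K)}^2\leq\tfrac14\Vert\phi\Vert_{L^2}^2, \]
whence $\Vert\phi-\pi\phi\Vert_{L^2}\geq\Vert\phi\Vert_{L^2}-\Vert\pi\phi\Vert_{L^2}\geq\tfrac12\Vert\phi\Vert_{L^2}$. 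Applying $B$-coercivity to $\phi-\pi\phi$ then yields
\[ \Vert D\phi\Vert_{L^2}=\Vert D(\phi-\pi\phi)\Vert_{L^2}\geq c\Vert\phi-\pi\phi\Vert_{L^2}\geq\tfrac{c}{2}\Vert\phi\Vert_{L^2}, \]
which is exactly Definition~\ref{coerbb}, with compact set $K$ and constant $c/2$.

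The argument is elementary; the only subtlety I anticipate is the order of quantifiers — the compact set $K$ must be chosen \emph{after} the kernel basis, large enough to absorb the $L^2$-mass of $\chi_1,\dots,\chi_k$ lying outside it, and this is precisely the step where finite-dimensionality of $\ker D_B$ is used. The other role of the hypotheses is merely to ensure, via $\Gamma_c^\infty(M\setminus K',\SS_M)\subset\dom\, D_B$, that the test spinors appearing in Definition~\ref{coerbb} are admissible inputs for the $B$-coercivity estimate.
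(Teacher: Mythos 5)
Your proof is correct, and it takes a genuinely different route from the paper. The paper argues by contradiction: assuming $D$ is not coercive at infinity, it takes a sequence $\phi_i\in\Gamma_c^\infty(M\setminus K,\SS_M)$ with $\Vert\phi_i\Vert_{L^2}=1$, $\Vert D\phi_i\Vert_{L^2}\to 0$, extracts a weak limit $\phi\in\ker D_B$ vanishing on $K$, splits $\phi_i$ into kernel and orthogonal parts, uses finite-dimensionality of $\ker D_B$ to upgrade convergence of the kernel part, and finally invokes the strong unique continuation property of the Dirac operator to force $\phi=0$ and thereby contradict $B$-coercivity. You instead verify Definition~\ref{coerbb} directly: finite-dimensionality enters only through the elementary observation that an orthonormal basis of $\ker D_B$ has arbitrarily small $L^2$-mass outside a sufficiently large compact set, after which Cauchy--Schwarz bounds the kernel projection of any spinor supported far out by $\tfrac12\Vert\phi\Vert_{L^2}$, and the $B$-coercivity estimate applied to $\phi-\pi\phi\in\dom\,D_B\cap(\ker D_B)^\perp$ (legitimate, since $K\supset K'$ ensures $\phi\in\dom\,D_B$ and $D\pi\phi=0$) gives the bound with explicit constant $c/2$. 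Your argument is more elementary -- no weak compactness, no unique continuation principle -- it produces an explicit compact set and constant, and it does not even use the hypothesis $B\subset H_{\frac{1}{2}}(\Sigma,\SS_M|_\Sigma)$; what the paper's argument offers in exchange is mainly a template (weak limits plus unique continuation) that recurs in related Fredholm-type statements, but for this lemma your direct proof is cleaner.
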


\begin{proof}Assume that $D$ is not coercive at infinity. Then, for all compact subsets $K\subset M$ there exists a sequence $\phi_i\in \Gamma_c^\infty(M\setminus K, \SS)$ with $\Vert \phi_i\Vert_{L^2}=1$ and $\Vert D\phi_i\Vert_{L^2}\to 0$.  We choose $K$ such that $K'\subset K$. Then, all those $\phi_i\in \dom\, D_B$. Thus, $\phi_i\to \phi\in \dom\, D_B$ weakly in the graph norm of $D$, $\phi\in \ker\, D_B$ and $\phi=0$ on $K$. We decompose $\phi_i=\phi_i^k +\phi_i^\perp$ where $\phi_i^k \in \ker\, D_B$ and  $\phi_i^\perp \in \left( \ker D_B\right)^\perp$. Then $\Vert D\phi_i^\perp\Vert_{L^2}\to 0$. Moreover, we assume that the kernel is finite dimensional, i.e. $\phi_i^k=\sum_{j=1}^l a_{ij}\psi_j$ where the $\psi_j$'s form an orthonormal basis of $\mathrm{ker}\, D_B$. Thus, $\Vert
\phi_i^k\Vert_{L^2}^2=\sum_{j=1}^l|a_{ij}|^2$. Assume now that $\Vert \phi_i^\perp\Vert_{L^ 2}\to 0$. Then $\phi_i^\perp\to 0$ in the graph norm. But $\Vert \phi_i\Vert_{L^2}=1$. This implies that there is at least one $j\in \{ 1, \ldots, l\}$ with $|a_{ij}|$ is bounded away from zero for almost all $i$, i.e. $\phi$ cannot be zero everywhere. Since $\phi$ is zero on $K$, this is a contradiction to the unique continuation principle. Thus, the assumption was wrong and there exists $c>0$ with $\Vert \phi_i^\perp\Vert_{L^2}>c$ and $D$ is not $B$-coercive at infinity.
\end{proof}

Note that the assumption on the existence of $K'$ is very restrictive. If the boundary is closed, it is automatically satisfied and we get the corollary below. If the boundary is noncompact, for a general $\dom\, D$ e.g. for the minimal domain of $D$, it is not true. 
But there are also examples for manifolds with noncompact boundary and closed extension of $D_{cc}$ where the  assumptions of the last Lemma are satisfied:

\begin{example} Let $(\Sigma, h)$ be a complete Riemannian $\spin$ manifold.
  Let $M_\infty=\Sigma\times \R$ and $M=\Sigma\times [0,\infty)$ be equipped with product metric $h+dt^2$. Both manifolds are of bounded geometry. Since $M_\infty$ is complete with no boundary, the Dirac operator on $M_\infty$ is essentially self-adjoint. Assume that the Dirac operator on $M_\infty$ is invertible.

  Let $K'\subset M_\infty$ be a compact subset that intersects $\Sigma\times \{0\}$ in a subset of non-zero measure. Define $\mathcal{L}$ to be the linear span of $\Gamma_c^\infty(M\setminus K', \SS_M)\cup \Gamma_{cc}^\infty(M,\SS_M)$ and $\dom\, D_B\colon= \overline{\mathcal{L}}^{\Vert. \Vert_D}$. Then, $B=\overline{\Gamma_c^\infty(\Sigma\setminus K', \SS_M|_\Sigma)}^{\Vert. \Vert_{\check{R}}}$. Note that by construction $\dom\, D_B$ is the domain of a closed extension of $D_{cc}$. But it is honestly smaller than $\dom\, D_{\rm max}$ since all $\phi\in B$ have to vanish on  $\Sigma\cap K'$. In particular, by the strong unique continuation property of $D$ \cite[Section 1.2]{BBL}  $D_B\colon \dom\, D_B \to L^2(M,\SS_M)$ has trivial kernel.
  
 It remains to show that $D_B$ is $B$-coercive at infinity, i.e. there is $c>0$ such that for all $\phi\in \mathcal{L}$ we have $\Vert D\phi\Vert_{L^2}\geq c\Vert \phi\Vert_{L^2}$. We will show this by contradiction, that is, we assume that there is a sequence $\phi_i\in \mathcal{L}$ with $\Vert \phi_i\Vert_{L^2}=1$ and $\Vert D\phi_i\Vert_{L^2}\to 0$. We will construct a sequence of spinors on $M_\infty$. Let $\tilde{\phi}_i$ be obtained from $\phi_i$ by reflection along $\Sigma$. Clearly, $\tilde{\phi}_i\in L^2(M_\infty, \SS_{M_\infty})$. Moreover, note that $\tilde{\phi}_i$ is everywhere continuous.  Let $\nu$ be the inward  normal vector field of $M$.
 For $\psi\in \Gamma_c^\infty(M_\infty, \SS_{M_\infty})$ we can estimate using \eqref{L2-structure_mod_boundary}
 \begin{align*}
  |(\tilde{\phi}_i,& D\psi)_{L^2(M_\infty)}|= \left|\int_{\Sigma\times (0,\infty)} \< \tilde{\phi}_i, D\psi\> + \int_{\Sigma\times (-\infty,0)} \< \tilde{\phi}_i, D\psi\>\right|\\
  = &\left|\int_{\Sigma\times (0,\infty)} \< D\tilde{\phi}_i, \psi\>  + \int_{\Sigma} \< \nu\cdot \tilde{\phi}_i|_\Sigma, \psi|_\Sigma\> +  \int_{\Sigma\times (-\infty,0)} \< D\tilde{\phi}_i, \psi\> + \int_{\Sigma} \< -\nu\cdot \tilde{\phi}_i|_\Sigma, \psi|_\Sigma\> \right|\\
  \leq& 2\Vert D\phi_i\Vert_{L^2(M)} \Vert \psi\Vert_{L^2(M_\infty)}\to 0.
 \end{align*}

In particular this means that $\tilde{\phi}_i\in H_1(M_\infty, \SS_{M_\infty})$ and that $\Vert D\tilde{\phi}_i\Vert_{L^2(M_\infty)}\to 0$ while $\Vert \tilde{\phi}_i\Vert_{L^2(M_\infty)}=2$. This gives a contradiction to the invertibility of the Dirac operator on $M_\infty$. 
\end{example}

\begin{corollary}\label{cor_coer_closed} Let the boundary $\Sigma$ be closed.
 If $B$ is an elliptic boundary condition as defined in \cite[Definition 7.5]{baer_ballmann_11}, 
$B$-coercivity at infinity implies coercivity at infinity. In particular, $D$ is $(B=0)$-coercive at infinity if and 
only if it is coercive at infinity.
\end{corollary}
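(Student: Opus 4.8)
The plan is to deduce the statement from Lemma~\ref{equiv_coer}, whose three hypotheses on a boundary condition $B$ (beyond $B$-coercivity at infinity) I will verify for an elliptic boundary condition $B$ on a closed boundary $\Sigma$: namely \textbf{(a)} $B\subset H_{\frac12}(\Sigma,\SS_M|_\Sigma)$; \textbf{(b)} the existence of a compact $K'\subset M$ with $\Gamma_c^\infty(M\setminus K',\SS_M)\subset\dom\, D_B$; and \textbf{(c)} $\dim\ker D_B<\infty$. Once these are in place, Lemma~\ref{equiv_coer} gives coercivity at infinity at once, and the remaining (``in particular'') converse implication for $B=\{0\}$ is exactly Corollary~\ref{easy_dir_cor} applied to the trivial boundary condition $\{0\}\subset H_{\frac12}(\Sigma,\SS_M|_\Sigma)$.

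First I would dispose of \textbf{(b)}: since $\Sigma$ is compact, the collar $U_\Sigma$ of Definition~\ref{bdd_geo}(iii) is relatively compact, so taking $K'$ to be a compact neighbourhood of $\Sigma$ inside $U_\Sigma$, any $\phi\in\Gamma_c^\infty(M\setminus K',\SS_M)$ has support disjoint from $\Sigma$ and hence lies in $\Gamma_{cc}^\infty(M,\SS_M)\subset\dom\, D_{\mathrm{min}}\subset\dom\, D_B$ by Lemma~\ref{norm_equ_0}; this is precisely the observation made just before the statement. For \textbf{(a)} and \textbf{(c)} I would invoke the structure theory of elliptic boundary conditions from \cite[Sections~7--8]{baer_ballmann_11}: an elliptic boundary condition is $H_{\frac12}$-regular, so $\dom\, D_B\subset H^1_{\mathrm{loc}}(M,\SS_M)$, and, $\Sigma$ being compact, spinors in $\dom\, D_B$ are $H^1$ on a collar of $\Sigma$; restricting to $\Sigma$ yields $B=R(\dom\, D_B)\subset H_{\frac12}(\Sigma,\SS_M|_\Sigma)$. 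For \textbf{(c)}, elliptic boundary conditions are finite-rank modifications of spectral (APS-type) conditions, for which $\ker D_B$ is finite-dimensional (its elements being, in addition, smooth by elliptic regularity up to the boundary); here I would cite the relevant statements in \cite{baer_ballmann_11} rather than reprove them.

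With \textbf{(a)}--\textbf{(c)} verified, Lemma~\ref{equiv_coer} yields that $B$-coercivity at infinity implies coercivity at infinity, which is the first assertion. For the ``in particular'': $B=\{0\}$ is trivially an elliptic (spectral) boundary condition, satisfying \textbf{(a)} trivially, \textbf{(b)} as above, and \textbf{(c)} because $\ker D_{\{0\}}=\ker D_{\mathrm{min}}=\{0\}$ by the strong unique continuation property of $D$ (cf.\ \cite[Section~1.2]{BBL}); so Lemma~\ref{equiv_coer} gives that $(B=0)$-coercivity at infinity implies coercivity at infinity, while the converse is Corollary~\ref{easy_dir_cor} since $\{0\}\subset H_{\frac12}(\Sigma,\SS_M|_\Sigma)$. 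The main obstacle is not in the assembly but in step \textbf{(c)}: citing, in the correct generality, that $\ker D_B$ is finite-dimensional for an \emph{arbitrary} elliptic boundary condition. This is where one genuinely leans on the elliptic boundary value theory of \cite{baer_ballmann_11} (and, implicitly, on $M$ being well-behaved at infinity, which here is guaranteed by the standing bounded-geometry assumption).
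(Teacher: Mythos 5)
Your proposal is correct and follows essentially the same route as the paper: the paper's proof likewise feeds Lemma \ref{equiv_coer} with the finite-dimensionality of $\ker D_B$ cited from \cite[Theorem 8.5]{baer_ballmann_11} (the remaining hypotheses of that lemma being immediate for a closed boundary), and gets the converse direction for $B=\{0\}$ from Corollary \ref{easy_dir_cor}. Only your parenthetical side remarks are off -- $B=\{0\}$ is not an elliptic boundary condition in the sense of \cite[Definition 7.5]{baer_ballmann_11}, and elliptic conditions are not in general finite-rank modifications of APS-type conditions (e.g.\ the conditions $B_\pm$ of this paper are graphs over the full negative spectral subspace) -- but neither claim is actually used, since you verify the hypotheses of Lemma \ref{equiv_coer} directly, with $\ker D_{\{0\}}=\{0\}$ by unique continuation.
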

\begin{proof}
If the boundary is closed and $B$ is elliptic, $D_B$ has a finite kernel \cite[Theorem 8.5]{baer_ballmann_11}.
The rest of the assumption in Lemma \ref{equiv_coer} is trivially fulfilled which gives the first claim. The rest follows with Corollary \ref{easy_dir_cor}.
\end{proof}

For closed boundaries and spin manifolds, assuming uniformly positive scalar curvature at infinity is a sufficient 
condition to have that $D$ is coercive at infinity, see \cite[Example 8.3]{baer_ballmann_11}. For noncompact boundaries,  we obtain the following 
\begin{lem}
\begin{itemize}
\item[(i)] If $\frac 12 \s^M +\i\Omega\cdot$ is a positive operator, the Dirac operator $D$ is $(B=0)$-coercive at infinity.
\item[(ii)] If $\frac 12 \s^M +\i\Omega\cdot$ is a positive operator and $H\geq 0$, the Dirac operator $D$ is $B_\pm$-coercive at infinity.
\end{itemize}
 \end{lem}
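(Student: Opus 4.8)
The plan is to read off both statements from the Schr\"odinger--Lichnerowicz formula \eqref{sl}, in its pointwise form and in the $H_1$-integrated form \eqref{Lich}, together with the boundary identities already at our disposal. Throughout I understand the hypothesis ``$\tfrac12\s^M+\i\Omega\cdot$ is a positive operator'' in the uniform sense: there is a constant $c_0>0$ with $\langle(\tfrac12\s^M+\i\Omega\cdot)\phi,\phi\rangle\geq c_0|\phi|^2$ at every point and for every spinor $\phi$. Some uniformity of this kind is forced on us by the very notion of coercivity at infinity, and under the standing bounded geometry assumption it is the natural replacement of the uniformly positive scalar curvature condition used in \cite[Example~8.3]{baer_ballmann_11}.

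\emph{Part (i).} By Lemma~\ref{norm_equ_0} we have $\dom\, D_{B=0}=\dom\, D_{\mathrm{min}}=\overline{\Gamma_{cc}^\infty(M,\SS_M)}^{\Vert.\Vert_D}$, and by the unique continuation property $\ker D_{\mathrm{min}}=\{0\}$, so $(B{=}0)$-coercivity at infinity amounts to an estimate $\Vert D\phi\Vert_{L^2}\geq c\Vert\phi\Vert_{L^2}$ for all $\phi\in\dom\, D_{\mathrm{min}}$, which by density in $\Vert.\Vert_D$ it suffices to check for $\phi\in\Gamma_{cc}^\infty(M,\SS_M)$. For such $\phi$ no boundary terms occur upon integration by parts, so applying \eqref{sl} to $\phi$, taking the $L^2$-product with $\phi$, and using the Gau\ss\ theorem gives
\[
\Vert D\phi\Vert_{L^2}^2=\Vert\nabla\phi\Vert_{L^2}^2+\tfrac12\int_M\big\langle(\tfrac12\s^M+\i\Omega\cdot)\phi,\phi\big\rangle\,dv\;\geq\;\tfrac{c_0}{2}\,\Vert\phi\Vert_{L^2}^2,
\]
which is the claim with $c=\sqrt{c_0/2}$.

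\emph{Part (ii).} First, $\dom\, D_{B_\pm}=\{\phi\in H_1(M,\SS_M)\mid P_\pm R\phi=0\}$ by Lemma~\ref{lemmaonP+-}(v) and Lemma~\ref{H1_dommax}. Fix such a $\phi$ and apply the $H_1$-version of the Lichnerowicz identity \eqref{Lich}, valid by Lemma~\ref{extequ}; the only boundary contribution is $\int_\Sigma\langle R\phi, D^W(R\phi)\rangle\,ds$ with $D^W=\widetilde D^{\Sigma}-\tfrac n2 H$. Since $P_\pm R\phi=0$ forces $R\phi=P_\mp R\phi$, the same computation as in the proof of Lemma~\ref{lemmaonP+-}(v) --- using $\widetilde D^{\Sigma}P_\mp=P_\pm\widetilde D^{\Sigma}$, the mutual orthogonality and self-adjointness of $P_\pm$ (Lemma~\ref{lemmaonP+-}(i)--(iii)) and the perfect pairing between $H_{\frac{1}{2}}$ and $H_{-\frac{1}{2}}$ (Lemma~\ref{pairing-Sobolev}, Remark~\ref{rem_app}.iii) --- shows $\int_\Sigma\langle R\phi,\widetilde D^{\Sigma}R\phi\rangle\,ds=0$, so the boundary term equals $-\tfrac n2\int_\Sigma H|R\phi|^2\,ds\leq 0$ by $H\geq 0$. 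Inserting this into \eqref{Lich}, bounding the left side below by $\Vert\phi\Vert_{L^2}^2$ and dropping the nonpositive boundary term, we obtain
\[
\Vert\phi\Vert_{L^2}^2\;\leq\;\Vert\phi\Vert_{L^2}^2+\Vert D\phi\Vert_{L^2}^2-\tfrac12\int_M\big\langle(\tfrac12\s^M+\i\Omega\cdot)\phi,\phi\big\rangle\,dv\;\leq\;\big(1-\tfrac{c_0}{2}\big)\Vert\phi\Vert_{L^2}^2+\Vert D\phi\Vert_{L^2}^2,
\]
hence $\Vert D\phi\Vert_{L^2}\geq\sqrt{c_0/2}\,\Vert\phi\Vert_{L^2}$ for every $\phi\in\dom\, D_{B_\pm}$, in particular on $\dom\, D_{B_\pm}\cap(\ker D_{B_\pm})^\perp$; this is exactly $B_\pm$-coercivity at infinity in the sense of Definition~\ref{coer}.

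I do not anticipate a genuine obstacle here: granting the uniform lower bound $c_0$, both parts reduce to a short manipulation of the Lichnerowicz identity. The only delicate point is that \eqref{Lich} and the vanishing of the $\widetilde D^{\Sigma}$-boundary term must be used for $\phi\in H_1$ rather than for smooth compactly supported spinors; this is legitimate by Lemma~\ref{extequ}, the mapping properties of $\widetilde D^{\Sigma}$ on the Sobolev scale (Remark~\ref{rem_app}.iii) and of $P_\pm$ (Lemma~\ref{lemmaonP+-}(ii)--(iii)), and the perfect pairing of Lemma~\ref{pairing-Sobolev}, exactly as in the proof of Lemma~\ref{lemmaonP+-}(v).
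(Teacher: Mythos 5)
Your proposal is correct and follows essentially the same route as the paper: both parts come from the integrated Lichnerowicz identity \eqref{Lich} (via Lemma \ref{extequ}), with the boundary term killed for (i) by $R\phi=0$ (Lemma \ref{norm_equ_0}, equivalently your density argument over $\Gamma_{cc}^\infty$) and for (ii) by $R\phi=P_\mp R\phi$ together with $\widetilde{D}^{\Sigma}P_\mp=P_\pm\widetilde{D}^{\Sigma}$ and the orthogonality of $P_\pm$, plus $H\geq 0$. Your uniform reading of ``positive operator'' matches the paper, which likewise fixes $c>0$ with $\tfrac12\s^M+\i\Omega\cdot\geq 2c$.
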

 
 \begin{proof} Let $c>0$ such that $\frac 12 \s^M +\i\Omega\cdot\geq 2c$. The Lichnerowicz formula 
\eqref{Lich} and Lemma \ref{extequ} give
\begin{align*}
\Vert D\phi\Vert_{L^2}^2&=\Vert \nabla \phi\Vert_{L^2}^2+\int_{M}
\frac{\s^M}{4}|\phi|^2 dv  + \int_{M} \frac{\i}{2}<\Omega\cdot\phi, \phi> dv -
\int_{\Sigma} \langle R\phi,\widetilde{D}^{\Sigma}(R\phi)\rangle ds\\
& +\frac{n}{2}\int_{\Sigma} H|R\phi|^2ds\geq c\Vert \phi\Vert_{L^2}^2 -\int_{\Sigma} \langle R\phi,
\widetilde{D}^{\Sigma}(R\phi)\rangle ds +\frac{n}{2}\int_{\Sigma} H|R\phi|^2ds,
\end{align*}
for all $\phi\in H_1(M, \SS_M)$. Then (i) follows directly with Lemma \ref{norm_equ_0}. For (ii), let now $H\geq 0$ and $R\phi\in B_\pm$.
Then, together with Lemma \ref{lemmaonP+-}, it implies
\begin{align*}
\Vert D\phi\Vert_{L^2}^2&\geq c\Vert \phi\Vert_{L^2}^2 -\int_{\Sigma} \langle R\phi,
\widetilde{D}^{\Sigma}(R\phi)\rangle ds=c\Vert \phi\Vert_{L^2}^2 -\int_{\Sigma} \langle P_\mp R\phi,
 \widetilde{D}^{\Sigma}(P_\mp R\phi)\rangle\\
 &= c\Vert \phi\Vert_{L^2}^2 -\int_{\Sigma} \langle P_\mp R\phi,
 P_\pm\widetilde{D}^{\Sigma}(R\phi)\rangle= c\Vert \phi\Vert_{L^2}^2.
\end{align*}
\end{proof}

\section{ $\Spinc$ Reilly inequality on possibly open boundary domains}\label{Reilly-sec}

In this section, we shortly review the spinorial Reilly inequality. This inequality together with those boundary value problems
 discussed in Section \ref{boundary_values} will be the main ingredient in the proof of Theorem \ref{main}.

\begin{thm} {\bf $\Spinc$ Reilly inequality.}
\label{Reilly} For all $\psi\in H_1(M,\SS_M)$, we have
\begin{eqnarray}\label{inequalityReilly}
\int_{\Sigma}\Big(\langle \widetilde{D}^{\Sigma}\psi,\psi\rangle-\frac{n}{2}
H|\psi|^2\Big)ds\geq\int_{M}\Big(\frac{1}{4} \s^M
 \vert\psi\vert^2 +\frac 12\langle \i\Omega\cdot\psi, \psi\rangle -\frac{n}{n+1}|D\psi|^2\Big)dv,
\end{eqnarray}
where $dv$ (resp. $ds$) is the Riemannian volume form of $M$ (resp. $\Sigma$).
Moreover,
equality occurs if and only if the spinor field $\psi$ is a twistor-spinor,
i.e. if and only if $P\psi=0$, 
where $P$ is the twistor operator
acting on $\SS_M$ and is locally given by $P_X\psi=\nabla_X\psi+\frac{1}{n+1}X\cdot D\psi$ for all $X \in\Gamma(TM)$. 
\end{thm}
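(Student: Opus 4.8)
The plan is to start from the $H_1$-description of the Dirac operator already established in \eqref{Lich} and Lemma \ref{extequ}, and to combine it with the pointwise Schrödinger--Lichnerowicz-type refinement coming from the twistor operator. First I would recall the pointwise identity $|\nabla\psi|^2 = |P\psi|^2 + \frac{1}{n+1}|D\psi|^2$, valid for all smooth spinors, which is obtained by writing $\nabla_X\psi = P_X\psi - \frac{1}{n+1}X\cdot D\psi$ in an orthonormal frame, squaring, summing over the frame, and using that the cross terms telescope to a multiple of $\langle D\psi, D\psi\rangle$ together with the Clifford relations (this is the standard computation underlying the Friedrich/twistor inequality). This gives the key estimate $\|\nabla\psi\|_{L^2}^2 \geq \frac{1}{n+1}\|D\psi\|_{L^2}^2$ on $\Gamma_c^\infty(M,\SS_M)$, with equality exactly when $P\psi\equiv 0$.

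Next I would plug this into the boundary version of the Lichnerowicz formula. Rearranging \eqref{Lich} (which by Lemma \ref{extequ} holds for all $\phi\in H_1(M,\SS_M)$) and writing $D^W = \widetilde{D}^{\Sigma}-\frac n2 H$, one has
\[
\|\nabla\phi\|_{L^2}^2 = \|D\phi\|_{L^2}^2 - \int_M \frac{\s^M}{4}|\phi|^2\, dv - \int_M \frac{\i}{2}\langle\Omega\cdot\phi,\phi\rangle\, dv + \int_\Sigma \langle R\phi, D^W(R\phi)\rangle\, ds.
\]
Combining with $\|\nabla\phi\|_{L^2}^2 \geq \frac{1}{n+1}\|D\phi\|_{L^2}^2$ and moving the $\|D\phi\|_{L^2}^2$ terms together yields
\[
\int_\Sigma \Big(\langle \widetilde{D}^{\Sigma}\phi,\phi\rangle - \frac n2 H|\phi|^2\Big)\, ds \geq \int_M \Big(\frac14 \s^M|\phi|^2 + \frac12\langle \i\Omega\cdot\phi,\phi\rangle - \frac{n}{n+1}|D\phi|^2\Big)\, dv,
\]
which is exactly \eqref{inequalityReilly}. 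The passage from smooth compactly supported spinors to all of $H_1(M,\SS_M)$ is handled by density and the continuity of all terms involved: the interior integrals are controlled as in the proof of Lemma \ref{extequ} using bounded geometry (so $|\s^M|,|\Omega|$ are uniformly bounded), and the boundary term is controlled via the Trace Theorem \ref{trace_theorem} together with the pairing $H_{1/2}(\Sigma)\times H_{-1/2}(\Sigma)\to\mathbb C$ of Lemma \ref{pairing-Sobolev} and the mapping property $\widetilde{D}^{\Sigma}\colon H_{1/2}(\Sigma,\SS_M|_\Sigma)\to H_{-1/2}(\Sigma,\SS_M|_\Sigma)$ from Remark \ref{rem_app}(iii).

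For the equality discussion: equality in \eqref{inequalityReilly} forces equality in $\|\nabla\phi\|_{L^2}^2 = \frac{1}{n+1}\|D\phi\|_{L^2}^2$, hence $\int_M |P\phi|^2\, dv = 0$, so $P\phi = 0$ almost everywhere; elliptic regularity for the overdetermined twistor equation (the operator $P$ is overdetermined elliptic, and $P\phi=0$ with $\phi\in H_1$ locally implies $\phi$ smooth) then gives that $\phi$ is a genuine smooth twistor spinor, and conversely $P\phi=0$ makes the single inequality an equality and hence \eqref{inequalityReilly} an equality. I expect the only mildly delicate point to be the regularity/density bookkeeping for the equality case when $\phi$ is merely in $H_1$ rather than smooth, but this is routine given local elliptic regularity for $P$; the main conceptual content is just the pointwise twistor decomposition combined with the already-proven boundary Lichnerowicz identity \eqref{Lich}.
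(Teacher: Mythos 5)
Your proposal is correct and follows essentially the same route as the paper: the paper's proof derives the boundary Lichnerowicz identity \eqref{div} (which is just \eqref{Lich} rearranged) via the divergence theorem together with \eqref{sl} and \eqref{diracgauss}, inserts the twistor decomposition $|\nabla\psi|^2=|P\psi|^2+\frac{1}{n+1}|D\psi|^2$, and then passes to $H_1(M,\SS_M)$ by density and the Trace Theorem exactly as in Lemma \ref{extequ}. Your only deviation is to quote Lemma \ref{extequ}/\eqref{Lich} directly (so the $H_1$ extension comes for free) rather than re-deriving the identity for compactly supported spinors first, which is a harmless reorganization; also note the equality case needs no elliptic regularity, since $P\psi=0$ in $L^2$ is all that is claimed.
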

\begin{proof} The inequality is proved for $\psi\in \Gamma_c^\infty(M, \SS_M)$ analogously as in the compact $\Spin$
 case \cite[(17)]{HMZ1}. For the convenience of the reader, we will shortly recall it here. 
Then for all $\psi\in H_1(M,\SS_M)$ the claim follows using the Trace Theorem \ref{trace_theorem} in the same way as in Lemma \ref{extequ}: We define $1$-forms $\alpha$ and $\beta$ on $M$ by 
$\alpha(X) = \langle X\cdot D\psi, \psi\rangle$ and $\beta(X) =
\langle\nabla_X\psi, \psi\rangle$ for all $X
\in \Gamma^\infty(TM)$. Then $\alpha$ and $\beta$ 
satisfy
$$\delta\alpha = \langle D^2\psi, \psi\rangle - \vert D \psi\vert^2, \quad
\delta\beta = 
- \langle\nabla^*\nabla\psi, \psi\rangle + \vert\nabla\psi\vert^2.$$
Applying the divergence theorem with \eqref{sl} and \eqref{diracgauss}, we get
\begin{eqnarray}\label{div}
\int_{\Sigma}\Big(\langle\widetilde{D}^{\Sigma} \psi, \psi\rangle -\frac n2
H\vert\psi\vert^2 \Big) ds = \int_{M} \Big(\vert\nabla
\psi\vert^2 - \vert D\psi\vert^2 +\frac 14 \s^M\  \vert\psi\vert^2 + \frac{\i}{2} \langle\Omega\cdot\psi, \psi\rangle\Big) dv.
\end{eqnarray}
On the other hand, for any spinor field $\psi$ we have 
\begin{eqnarray}
 \label{twistor}
\vert\nabla\psi\vert^2 =  \vert P\psi\vert^2 + \frac{1}{n+1} \vert D\psi\vert^2.
\end{eqnarray}
Combining the identities \eqref{twistor}, and \eqref{div} and $\vert
P\psi\vert^2 \geq 0$, the result follows. Equality holds if and only if 
$\vert P\psi\vert^2 = 0$, i.e. the spinor $\psi$ is a twistor spinor.
\end{proof}

\section{A lower bound for the first nonnegative eigenvalue of the Dirac operator
on the boundary}\label{proofmain}


In this section, we prove Theorem \ref{main}. For that we won't follow the original proof given in \cite{HMZ1} due to 
our problems concerning the $\mathrm{APS}$-boundary conditions as remarked at the end of Example \ref{ex_bd_cond}.iv. But we will use $B_\pm$ as given in Example \ref{ex_bd_cond}.iii. 

\begin{proof}[Proof of Theorem \ref{main}] 
Since $\Sigma$ is of bounded geometry, $\widetilde{D}^{\Sigma}: H_1(\Sigma, \SS_M|_\Sigma)\to L^2(\Sigma, \SS_M|_\Sigma)$ is self-adjoint and, hence,  
$\lambda_1$ is an eigenvalue or in the essential spectrum of  $\widetilde{D}^{\Sigma}$. In both cases, there is
 a sequence $\phi_i\in H_1(\Sigma, \SS_M|_\Sigma)$ with $\Vert \phi_i\Vert_{L^2(\Sigma)}=1$ and
 $\Vert(\widetilde{D}^{\Sigma} -\lambda_1)\phi_i\Vert_{L^2(\Sigma)}\to 0$. Then, $\phi_i\to \phi$ weakly in $L^2(\Sigma, \SS_M|_\Sigma)$. (In case that $\phi\neq 0$, then $\phi$ is an eigenspinor of $\widetilde{D}^{\Sigma}$ to the eigenvalue $\lambda_1$ otherwise $\lambda_1$ is in the essential spectrum of $\widetilde{D}^{\Sigma}$). We assumed that $D$ is $B_-$-coercive at infinity 
(everything which follows is also true when assuming 
$B_+$-coercivity at infinity when switching the signs). Then  by  Lemma \ref{coercive_closed}, 
 the range of
 $D_{B_-}$ is closed. Moreover, from Lemma \ref{lemmaonP+-} we have $\ker\, (D_{B_-})^*= \ker\, D_{B_+}=\{0\}$. Thus, 
due to Corollary \ref{bvp-H1} for each $i$
 there exists a unique $\Psi_i\in H_1(M,\SS_M)$ with $D\Psi_i=0$ and $P_+R\Psi_i=P_+\phi_i.$ Using 
Theorem \ref{Reilly} and $\s^M+2\i\Omega\cdot \geq 0$, we obtain
\begin{align*}
0\leq \int_{\Sigma} \left( \langle \widetilde{D}^{\Sigma}R\Psi_i,R\Psi_i\rangle
-\frac{n}{2}H|R\Psi_i|^2\right) ds.
\end{align*}
Moreover,
\begin{align*}(\widetilde{D}^{\Sigma} (P_+R\Psi_i +P_-R\Psi_i), P_+R\Psi_i +P_-R\Psi_i)_\Sigma&=
 ( \widetilde{D}^{\Sigma} P_+ R\Psi_i, P_-R\Psi_i)_\Sigma + ( \widetilde{D}^{\Sigma} P_- R\Psi_i, P_+R\Psi_i)_\Sigma\\&=
 ( \widetilde{D}^{\Sigma} P_+ R\Psi_i, P_-R\Psi_i)_\Sigma + ( P_- R\Psi_i,  \widetilde{D}^{\Sigma} RP_+\Psi_i)_\Sigma,\end{align*}
where we used Lemma \ref{lemmaonP+-} and that $\widetilde{D}^{\Sigma}$ is self-adjoint on $H_1(\Sigma, \SS_M|_\Sigma)$. Hence, summarizing we get that

\begin{align*}
\frac{n}{2}\int_{\Sigma} H|R\Psi_i|^2ds &\leq 2\Re \int_{\Sigma}  \langle \widetilde{D}^{\Sigma} P_+ R\Psi_i,P_-R\Psi_i\rangle
 ds=2\Re \int_{\Sigma}  \langle P_-\widetilde  D \phi_i,P_-R\Psi_i\rangle
 ds\\
&\leq 2\Re \int_{\Sigma}  \langle P_-(\widetilde{D}^{\Sigma}-\lambda_1)  \phi_i,P_-R\Psi_i\rangle
 ds+ 2\lambda_1 \Re \int_{\Sigma}  \langle P_- \phi_i,P_-R\Psi_i\rangle
 ds.
\end{align*}

Using  $2\Re \int_{\Sigma}  \langle P_- \phi_i,P_-R\Psi_i\rangle
 ds\leq \Vert P_- \phi_i \Vert_{L^2(\Sigma)}^2 +\Vert P_-R\Psi_i\Vert_{L^2(\Sigma)}^2 $ and  $\lambda_1\geq 0$, 
we obtain
\begin{align*}
\frac{n}{2}\inf_\Sigma H \Vert R\Psi_i\Vert_{L^2(\Sigma)}^2&\leq 2\Vert (\widetilde{D}^{\Sigma}-\lambda_1)  \phi_i\Vert_{L^2} \Vert R\Psi_i\Vert_{L^2}+ \lambda_1 (\Vert P_- \phi_i \Vert_{L^2(\Sigma)}^2 +\Vert P_-R\Psi_i\Vert_{L^2(\Sigma)}^2).
\end{align*}
Moreover, $(\widetilde{D}^{\Sigma} P_\pm \phi_i, P_\mp \phi_i)=(P_\mp (\widetilde{D}^{\Sigma} -\lambda_1) \phi_i, P_\mp \phi_i) + 
\lambda_1 \Vert P_\mp \phi_i\Vert_{L^2}^2$. Since $\widetilde{D}^{\Sigma}$ is self-adjoint, 
$\Re (\widetilde{D}^{\Sigma} P_+\phi_i, P_-\phi_i)=  \Re (\widetilde{D}^{\Sigma} P_-\phi_i, P_+\phi_i)$. Thus, together with
 \[|(P_\mp (\widetilde{D}^{\Sigma} -\lambda_1) \phi_i, P_\mp \phi_i)|\leq \Vert  (\widetilde{D}^{\Sigma} -\lambda_1) \phi_i\Vert_{L^2}\Vert\phi_i\Vert_{L^2}\to 0\] as $i\to \infty$, 
 this implies that  $\lim_{i\to \infty} \Vert P_-\phi_i\Vert_ {L^2}= \lim_{i\to\infty} \Vert P_+\phi_i\Vert_ {L^2} =\frac{1}{2}$ for $\lambda_1\neq 0$. 
Hence, for certain $\epsilon_i$ with $\epsilon_i\to 0$ as $i\to\infty$
\begin{align*}
\frac{n}{2}\inf_\Sigma H \Vert R\Psi_i\Vert_{L^2(\Sigma)}^2&\leq 2\Vert (\widetilde{D}^{\Sigma}-\lambda_1)  \phi_i\Vert_{L^2} \Vert R\Psi_i\Vert_{L^2}+ \lambda_1 (\Vert P_+ \phi_i \Vert_{L^2(\Sigma)}^2 +\epsilon_i +\Vert P_-R\Psi_i\Vert_{L^2(\Sigma)}^2)\\
&\leq 2\Vert (\widetilde{D}^{\Sigma}-\lambda_1)  \phi_i\Vert_{L^2} \Vert R\Psi_i\Vert_{L^2}+ \lambda_1 (\Vert P_+ R\Psi_i \Vert_{L^2(\Sigma)}^2 +\epsilon_i +\Vert P_-R\Psi_i\Vert_{L^2(\Sigma)}^2)\\
&\leq 2\Vert (\widetilde{D}^{\Sigma}-\lambda_1)  \phi_i\Vert_{L^2} \Vert R\Psi_i\Vert_{L^2}+ \lambda_1 (\Vert R \Psi_i \Vert_{L^2(\Sigma)}^2 +
\epsilon_i).
\end{align*}
Hence, $$\frac{n}{2}\inf_\Sigma H \leq 2\Vert (\widetilde{D}^{\Sigma}-\lambda_1)  \phi_i\Vert_{L^2} \Vert 
R\Psi_i\Vert_{L^2}^{-1}+ \lambda_1 (1+\epsilon_i \Vert R\Psi_i\Vert_{L^2}^{-2}).$$
With $ \Vert R\Psi_i\Vert_{L^2}\geq  \Vert P_+ R\Psi_i\Vert_{L^2}=\Vert P_+ \phi_i\Vert_{L^2} \to \frac{1}{2}$, we finally get for $i\to\infty$ 
\[ \frac{n}{2}\inf_\Sigma H \leq \lambda_1.\]

Next we collect all conditions that have to be fulfilled to obtain the equality $ \frac{n}{2}\inf_\Sigma H = \lambda_1$:
\begin{itemize}
 \item[(1)] From the spinorial Reilly Inequality \eqref{inequalityReilly}, $\int_M |P\Psi_i|^2dv\to  0$ which implies together with $D\Psi_i=0$ that $\int_M |\nabla \Psi_i|^2dv\to  0$.
\item[(2)] $\int_M \s^M|\Psi_i|^2+2i\langle \Omega\cdot \Psi_i, \Psi_i\rangle dv\to 0$ 
\item[(3)] $\Vert \phi_i - R\Psi_i\Vert_{L^2(\Sigma)}\to 0$
\item[(4)] $\int_{\Sigma} (H-\inf_\Sigma H) |R\Psi_i|^2 ds \to 0$.
\end{itemize}
In case that $\lambda_1$ is an eigenvalue of $\widetilde{D}^{\Sigma}$ with eigenspinor $\phi$, one can
 choose $\phi_i=\phi$ for all $i$. Then $\Psi_i=:\Psi$ for all $i$ and those equality
 conditions reduce to $\phi=R\Psi$, $\Psi$ is a parallel spinor on $M$, $H$ is constant 
and $\int_M \s^M|\Psi|^2+2i\langle \Omega\cdot \Psi, \Psi\rangle dv= 0$.
\end{proof}


\begin{thebibliography}{99}                                                                                                                                                                                                                                                                                                                                                                   

\bibitem{AF} I.~Agricola, T.~Friedrich, {\it Upper bounds for the first eigenvalue of the Dirac
operator on surfaces}, Journ. Geom. Phys., 30 (1) (1999), 1-22.
\bibitem{Am1} B.~Ammann, {\it The Willmore conjecture for immersed tori with small curvature integral}, Manuscripta Math., 101 (2000), 1-22.
\bibitem{Am2} B.~Ammann, {\it Ambient Dirac eigenvalue estimates and the Willmore functional}, in ``Dirac Operators: Yesterday and Today", International Press,
2005. Editors: Bourguignon, Branson, Chamseddine, Hijazi, Stanton,
221-228.
\bibitem{Ammha} B.~Ammann, {\it A variational problem in Conformal Spin
Geometry}, Habilitation, Hamburg, Germany: Universit\"at Hamburg, 2003.
\bibitem{AGHM} B~Ammann, J.-F.~Grosjean, E.~Humbert, B.~Morel, {\it  A spinorial analogue of {A}ubin's inequality},
   Math. Z., 260 (1), (2008), 127--151.
\bibitem{An} N.~Anghel, {\it Extrinsic upper bounds for eigenvalues of Dirac-type operators}, Proc. AMS 117, (1993), 501-509.
\bibitem{baer} C.~B\"ar, {\it The spectrum of the {D}irac operator}, in: Dirac operators: yesterday and today, Int. Press, Somerville, MA (2005), 145--162.
\bibitem{baer_ballmann_11} C.~B\"ar and W.~Ballmann, {\it Boundary {V}alue {P}roblems for {E}lliptic {D}ifferential {O}perators of {F}irst {O}rder}, Surv. Differ. Geom. 17, Int. Press, Boston, MA (2012), 1--78.
\bibitem{Ba} C.~B\"ar, {\it Extrinsic bounds of the Dirac operator} , Ann. Glob. Anal. Geom., 16 (1998), 573-596.
\bibitem{Bm} H.~Baum, {\it An upper bound for the first eigenvalue of the Dirac operator on compact spin manifolds}, Math. Z. 206 (1991), 409--422.
\bibitem{BBL} B.~Boo{\ss}-Bavnbek, M~Lesch, {\it The invertible double of elliptic operators}, Lett. Math. Phys. 87 (2009), No 1-2, 19--46.
\bibitem{Eich} J.~Eichhorn, {\it The boundedness of connection coefficients and their
              derivatives}, Math. Nachr. (152), 1991, 145--158.
\bibitem{6} T.~Friedrich, {\it Dirac operators in Riemannian Geometry},
Graduate studies in mathematics, Volume 25, Americain Mathematical Society.
\bibitem{conny} N.~Gro\ss e and C.~Schneider, {\it Sobolev spaces on Riemannian manifolds with bounded geometry: General coordinates and traces}, Math. Z. 286 (2013), 1586--1613.
\bibitem{Hebey} E.~Hebey, {\it Sobolev spaces on {R}iemannian manifolds},
Lecture Notes in Mathematics (1635), Springer-Verlag, Berlin (1996), x+116.
\bibitem{HZ1} O. Hijazi and X. Zhang, {\it Lower bounds for the eigenvalues of the Dirac operator. Part I: The hypersurface Dirac operator}, Ann. Global Anal. Geom. 19, (2001) 355-376.
\bibitem{HZ2} O. Hijazi and X. Zhang, {\it Lower bounds for the eigenvalues of the Dirac operator. Part II: The submanifold Dirac operator}, Ann. Global Anal. Geom. 19, (2001) 163-181.
\bibitem{HMZ1} O. Hijazi, S. Montiel, and X. Zhang, {\it Dirac operator on
embedded
 hypersurfaces}, Math. Res. Lett. 8 (2001), 20-36.
 \bibitem{HMZ2}O. Hijazi, S. Montiel, and X. Zhang, {\it Eigenvalues of the Dirac
 operator on manifolds with boundary}, Comm. Math. Phys., 221 (2001), 255-265.
\bibitem{HMZ02} O. Hijazi, S. Montiel, and X. Zhang, {\it Conformal lower bounds
for the Dirac operator on embedded
 hypersurfaces}, Asian J. Math. 6 (2002), 23-36.
 \bibitem{HMU1} O. Hijazi, S. Montiel and F. Urbano, {\it $\Spinc$ geometry of
K\"{a}hler manifolds and the Hodge Laplacian on minimal Lagrangian
submanifolds},  Math. Z. 253, Number 4 (2006) 821-853.
\bibitem{19}  A. Moroianu, {\it Parallel and Killing spinors on $\Spinc$
 manifolds}, Commun. Math. Phys. 187 (1997), 417-428.
\bibitem{2ana}R. Nakad, {\it The Energy-Momentum tensor on $\Spinc$ manifolds},
 International Journal of Geometric Methods in Modern Physics (IJGMMP), Vol. 8, No. 2, 2011.
\bibitem{nakadthese} R. Nakad, {\it Special submanifolds of $\Spinc$ manifolds},
Ph. D thesis, Institut \'Elie Cartan, 2011.
\bibitem{RenRog} M.~Renardy, R.~Rogers, {\it An introduction to partial differential equations}, Texts in applied mathematics, Springer, New-York, 2nd edition (2004), 445.
\bibitem{Schick01} T.~Schick, {\it Manifolds with boundary and of bounded geometry},
 Math. Nachr. 223 (2001), 103--120.
 \bibitem{Shubin} M.A.~Shubin,
{\it Spectral theory of elliptic operators on noncompact manifolds},
      M{\'e}thodes semi-classiques, Vol. 1 (Nantes, 1991), Ast\'erisque (207),
1992, 35--108.
\bibitem{taylor_81} M.E.~Taylor, {\it Pseudodifferential operators},
    Princeton Mathematical Series (34), Princeton University Press (1981),
     xi+452.
\bibitem{TF} H.~Triebel, {\it Theory of function spaces}, Monographs in
  Mathematics (78), Birkh\"auser Verlag, Basel (1983), 284. 	
\bibitem{Yo} K. Yosida, {\it Functional analysis},Classics in Mathematics,
Reprint of the sixth (1980) edition, Springer-Verlag, Berlin (1995), xii+501.

\bibitem{Z} X. Zhang, {\it Lower bounds for eigenvalues of hypersurface Dirac operators}, Math. Res. Lett. 5 (1998), 199-210; {\it A remark: Lower bounds for eigenvalues of hypersurface Dirac operators}, Math. Res. Lett. 6 (1999), 465-466.
\end{thebibliography}
\end{document}